\newif\ifpix \pixtrue
\numberwithin{equation}{section}
\newcommand{\D}{{\mathring{\Xi}}}
     \def\CC{{\mathbb{C}}}
\def\RR{{\mathbb{R}}}    
\def\cO{{\mathcal{O}}}
\def\TI{{\mathfrak{I}}}
\def\cX{{\mathcal{X}}}
\def\cXhat{{\widehat {\mathcal{X}}}}
\def\cY{{\mathcal{Y}}}
\def\Ka{K\"ahler }
\renewcommand{\epsilon}{\varepsilon}
\newcommand{\U}{\mathrm{U}}
\newcommand{\CP}{\mathbb{C}P}
\newtheorem{rem}{Remark}
\newtheorem{lemma}[rem]{Lemma}
\newtheorem{prop}[rem]{Proposition}
\newtheorem{cor}[rem]{Corollary}
\newtheorem{thm}[rem]{Theorem}
\newtheorem{dfn}[rem]{Definition}
\newtheorem{example}[rem]{Example}
\newtheorem*{rmk*}{Remark}
\newtheorem*{rmks*}{Remarks}
\newtheorem{theointro}{Theorem}
\newtheorem{defintro}{Definition}
\date{\today}
\title[ALE metrics on non-compact weighted projective spaces]{ALE scalar-flat K\"ahler metrics on non-compact weighted projective spaces}
\author{Vestislav Apostolov}
\address{Vestislav Apostolov \\ D{\'e}partement de Math{\'e}matiques\\
UQAM\\ C.P. 8888 \\ Succursale Centre-ville \\ Montr{\'e}al (Qu{\'e}bec) \\
H3C 3P8 \\ Canada}
\email{apostolov.vestislav@uqam.ca}
\author{Yann Rollin}
\address{Yann Rollin \\ Laboratoire Jean Leray\\
Universit\'e de Nantes\\ Nantes  \\ France}
\email{yann.rollin@univ-nantes.fr}
\thanks{V.A. was supported in part by an NSERC discovery grant and is
  grateful to the Institute of Mathematics and Informatics of the
  Bulgarian Academy of Sciences where a part of this project was
  realized. Y.R. was supported in part by the ANR grant EMARKS and by
  CNRS during his visit at the UMI lab of Montr\'eal. He is grateful
  to UQAM for hosting him in Montr\'eal where
  this joint work started.  The authors are very grateful to the anonymous referee for her/his many valuable suggestions which greatly improve the paper. They  also thank Jeff Viaclovsky  for insightful discussions which led to Theorem~2,   David Calderbank and Paul Gauduchon for their constructive  comments, and  Claudio Arezzo  for stimulating discussions and  for giving them access to the unpublished  work \cite{AL}.}
\begin{document}
{\Huge \sc \bf\maketitle}
\begin{abstract}
We construct new explicit toric scalar-flat \Ka ALE metrics with orbifold singularities,  which we use to obtain smooth extremal K\"ahler metrics on  appropriate resolutions of orbifolds.
\end{abstract}

\section{Introduction} In this paper we apply the formalism from
\cite{ACG,ACGT} in order to generalize the construction of
scalar-flat K\"ahler {\it asymptotically locally euclidean} (ALE)
metrics,  defined on the total space of the vector bundle
$\mathcal{O}(-r) \to \CP^n$ (see
e.g. \cite{LeBrun,simanca,pedersen-poon,rollin-singer,HS}) to a family of
scalar-flat K\"ahler  non-compact ALE orbifolds, called {\it
  non-compact weighted projective spaces}.

Using the Arezzo--Pacard gluing theory, non-compact weighted projective
spaces are then used as building blocks for the construction of new
extremal K\"ahler metrics (in the sense of Calabi) on
certain resolutions of a large class of extremal orbifolds. More
precisely, we show that extremal K\"ahler orbifolds with singularities of type
$\TI$ admit resolutions endowed with extremal K\"ahler metrics as well, in suitable
K\"ahler classes (cf. \S\ref{sec:resolsub}). 

\subsection{New scalar-flat ALE K\"ahler metrics on non-compact weighted projective spaces}
Let 
$${\bf a}=(a_0, a_1, \ldots, a_m),\mbox{ with $m\geq 2$ and $a_j>0$},
$$
 be a $(m+1)$-tuple of
positive integers. In the sequel, ${\bf a}$ shall be referred to as a {\it
  weight-vector}.    Non-compact weighted  projective spaces are then
defined as follows:
\begin{defintro}\label{d:orbifold} 
The non-compact weighted projective
  space $\mathbb{C}P^m_{-a_0, a_1, \ldots, a_m}$ associated to the
  weight-vector ${\bf a}$ is the complex orbifold obtained as the
  quotient of the open set
$$\Big\{(z_0, z_1, \ldots, z_m) : (z_1, \ldots, z_{m})\neq (0, \ldots, 0)\Big\} \subset \mathbb{C}^{m+1}$$ by the $\mathbb{C}^*$-action
\begin{equation*}
\rho_{\bf a} (\lambda) (z_0, z_1, \ldots,  z_m) = (\lambda^{-a_0}z_0, \lambda^{a_1}z_1, \ldots, \lambda^{a_m}z_m).
\end{equation*}
To avoid dealing with  orbifold quotients, we shall also assume
 $${\rm gcd}(a_0, \ldots, a_m) =1$$
 which insures that  $\mathbb{C}P^m_{-a_0, a_1, \ldots, a_m}$ is a simply connected orbifold, see e.g. \cite[Rem.~2]{ACGT}.
\end{defintro}
  \begin{example}\label{e:basic} {\rm 
 Note that the total space of the line bundle $\cO(-r)\to
\mathbb{C}P^{m-1}$ is isomorphic to the weighted projective space
$\mathbb{C}P^{m}_{-r, 1, \ldots, 1}$. The  bundle projection
$\cO(-r)\simeq \mathbb{C}P^{m}_{-r, 1, \ldots, 1}\to \CP^{m-1}$
 is given by
 $$[z_0:z_1:\ldots:z_{m}]\mapsto [z_1:\ldots:z_{m}]$$ in
 homogeneous coordinates. Weighted projective spaces have in general 
 orbifold singularities. However, in the particular case when ${\bf a}=(r,1,\ldots,1)$, it is a  smooth complex manifold.}
\end{example}

We can now state  one of our main results:
\begin{theointro}\label{thm:main} For any $m\ge 2$, the  orbifold $\mathbb{C}P^{m}_{-a_0, a_1,\ldots, a_m}$ admits a scalar-flat K\"ahler  ALE metric $g_{\bf a}$ with quotient singularity at infinity $\mathbb{C}^m/\Gamma_{\bf a}$,  where $\Gamma_{\bf a}$ is  the finite cyclic sub-group of ${\rm U}(m)$ generated by 
$$\gamma={\rm diag}(\xi^{a_1}, \ldots,  \xi^{a_m})\subset {\rm U}(m),$$
where $\xi$ is a primitive $a_0$-th rooth of unity. 
The metric admits a hamiltonian $2$-form of order ${\ell}$, where $\ell$ is  the number of different integers $\{a_j, \ j=1, \ldots, m\}$,  and  is Ricci-flat if and only if $a_0= \sum_{i=1}^m a_i.$
\end{theointro}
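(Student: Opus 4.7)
The plan is to apply the hamiltonian $2$-form ansatz developed in \cite{ACG,ACGT}. Let $b_1<\cdots<b_\ell$ denote the distinct values among $\{a_1,\ldots,a_m\}$, with multiplicities $d_1,\ldots,d_\ell$. The toric geometry of $\mathbb{C}P^{m}_{-a_0,a_1,\ldots,a_m}$ presents it as a rank-one fibration over the compact weighted projective space $\mathbb{C}P^{m-1}_{a_1,\ldots,a_m}$, which itself carries an admissible structure with $\ell$ Fubini--Study blocks, one per repeated weight. First, I would write down the corresponding admissible K\"ahler ansatz of \cite{ACGT}, depending on $\ell$ momentum variables $x_1,\ldots,x_\ell$ and profile functions $\Theta_1,\ldots,\Theta_\ell$; this produces a toric K\"ahler metric carrying a hamiltonian $2$-form of order exactly $\ell$, and recovers the classical Calabi ansatz (cf. \cite{LeBrun,simanca,pedersen-poon}) when $\ell=1$.

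Next, I would translate the scalar-flat K\"ahler equation into an ODE system on the $\Theta_i$. By the general computation of \cite{ACGT}, this amounts to asking each $\Theta_i$ to be a polynomial in $x_i$ of prescribed degree, whose top coefficients are dictated by the vanishing scalar curvature while the remaining coefficients are free. The crucial step is to fix these free parameters so that two boundary conditions hold simultaneously: (i) $\Theta_i$ vanishes with the correct slope at the finite endpoint of its momentum interval, which guarantees that the metric closes up smoothly, in the orbifold sense, along the zero-section divisor $\{z_0=0\}\cong \mathbb{C}P^{m-1}_{a_1,\ldots,a_m}$ with normal bundle determined by $a_0$; (ii) $\Theta_i$ has the appropriate polynomial growth at the infinite endpoint to produce an ALE end. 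A parameter count shows that both conditions can indeed be realised, yielding the metric $g_{\bf a}$ uniquely up to scale.

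To identify the ALE group, I would introduce a euclidean-type radial coordinate at infinity built from the momenta and compare the leading asymptotics of the admissible ansatz with the flat metric on $\mathbb{C}^m$. The residual isotropy of the torus action on the asymptotic euclidean slice is exactly the kernel of the $\mathbb{C}^*$-action on $(z_1,\ldots,z_m)$, namely the cyclic subgroup of $\mathrm{U}(m)$ generated by $\mathrm{diag}(\xi^{a_1},\ldots,\xi^{a_m})$ with $\xi$ a primitive $a_0$-th root of unity; this yields the stated asymptotic model $\mathbb{C}^m/\Gamma_{\bf a}$.

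Finally, the Ricci form of any admissible K\"ahler metric is itself admissible, and its Ricci potential contains a residue contribution from the normal bundle of the zero section which is linear in $a_0-\sum_{i=1}^{m}a_i$. Imposing Ricci-flatness therefore forces precisely the relation $a_0=\sum_{i=1}^{m}a_i$, as claimed. The main technical obstacle will be verifying the simultaneous compatibility of the zero-section and ALE boundary conditions in the second step, together with a careful book-keeping of the Fubini--Study curvature terms in the admissible ansatz of \cite{ACGT}, so as to extract the cyclic group $\Gamma_{\bf a}$ correctly from the leading-order asymptotic data.
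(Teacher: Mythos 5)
Your overall strategy---invoking the hamiltonian $2$-form/ACGT ansatz, imposing endpoint conditions on profile polynomials, comparing with a flat model at infinity, and reading off Ricci-flatness from the ansatz---is the paper's strategy in outline, but the structural setup you start from is not the one that works. The paper does \emph{not} realize the metric as a rank-one (Calabi-type) fibration over the compact weighted projective space $\mathbb{C}P^{m-1}_{a_1,\ldots,a_m}$: that base carries only an extremal (Bochner-flat) metric, which is not of constant scalar curvature when the $a_i$ are not all equal, so the scalar curvature of a Calabi-ansatz total space over it is not a function of the fibre momentum alone and the ODE reduction you describe does not close; nor does stacking a rank-one ansatz on a base with a hamiltonian $2$-form automatically produce a hamiltonian $2$-form of higher order. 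What is actually used is an order-$\ell$ \emph{orthotoric} fibration (fibre dimension $\ell$, one momentum variable per distinct weight) over the product $S=\prod_{j=1}^{\ell}\mathbb{C}P^{n_j}$ of \emph{smooth} projective spaces carrying genuine Fubini--Study metrics, where $n_j+1$ is the multiplicity of the $j$-th distinct weight; the singular space $\mathbb{C}P^{m-1}_{a_1,\ldots,a_m}$ only appears a posteriori, after a blow-down, as the zero section. Moreover, in that ansatz the scalar-flat equation forces $F_j''=p''$ for every $j$, which together with the interior endpoint conditions pins $F_1=\cdots=F_{\ell-1}=p$ exactly and leaves a single free constant in $F_{\ell}$ (fixed by the weight $a_0$); your ``parameter count'' glosses over this rigidity, and the uniqueness-up-to-scale you assert depends on it.

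The second genuine gap is the compactification and the identification of the group at infinity. Checking that the metric ``closes up along the zero section'' is only one of $m+1$ boundary conditions: one must verify the Abreu/ACGT first-order conditions for the symplectic potential at \emph{every} facet of the unbounded simplex $\mathring{\rm P}_m\subset\mathbb{R}^m$ with respect to the correct weighted inward normals (Proposition~\ref{general}), since the horizontal part of the ansatz degenerates on the preimages of several facets (the blow-down phenomenon). It is exactly this computation that identifies the resulting orbifold as $\mathbb{C}P^m_{-a_0,a_1,\ldots,a_m}$ and, by comparing the lattice spanned by all $m+1$ normals with the sublattice spanned by the $m$ normals of the asymptotic cone, exhibits $\Gamma_{\bf a}=\Lambda_{\bf a}/\Lambda^0_{\bf a}$ as the group at infinity; ``residual isotropy of the torus action'' is the right heuristic but is not a proof. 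Finally, the ALE claim requires a quantitative expansion of the K\"ahler potential, $H=\tfrac14||z||^2+A||z||^{4-2m}+O(||z||^{3-2m})$, obtained by constructing an explicit $T^m$-equivariant biholomorphism onto the flat model produced by the same ansatz and estimating the shift in the pluriharmonic coordinates; a comparison of ``leading asymptotics'' must be carried to this order for the statement (and for the gluing applications that rely on it) to hold.
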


In the special case $\cO(-r)\simeq \mathbb{C}P^{m}_{-r, 1, \ldots,
  1}$, the ALE metrics we get coincide with the well-known
Burns--Eguchi--Hanson--LeBrun--Pedersen--Poon--Simanca metrics on the total space of
$\cO(-r)\to \CP^{m-1},$ see e.g. \cite{LeBrun,simanca,pedersen-poon, rollin-singer}.~\footnote{The case $m=2, r=1$ is due to Burns, $m=2, r=2$ to Eguchi--Hanson,  $m=2, r>2$ to LeBrun,  $m>1, r=1$ to Simanca, and $m>2, r>2$ to Pedersen--Poon and Rollin--Singer.}  We review this known construction  in Section~\ref{s:calabi-type} and present the general case in Section~\ref{s:general}. 

Another interesting special case is  when the complex dimension is $2$ (i.e. $m=2$).  It has been then brought to our attention by J. Viaclovsky~\cite{jeff} that the conformal geometry of the ALE metric on $\mathbb{C}P^2_{-a_0, a_1, a_2}$  in Theorem~\ref{thm:main} should be related to the Bochner-flat (or anti-self-dual) orbi-metrics on the corresponding compact orbi-surface $\mathbb{C}P^2_{a_0,a_1,a_2}$, see \cite{DL} for the case 
$a_0=r, a_1=a_2=1$.  We show in Section~\ref{s:m=2} that this is indeed the case:
\begin{theointro}\label{thm:jeff} Let $\mathbb{C}P^2_{a_0,a_1,a_2}$ be the
  compact weighted projective plane of weight ${\bf a}=(a_0,a_1,a_2)$,
i.e.  the quotient of ${\mathbb C}^3\setminus\{0\}$ by the
  $\mathbb{C}^{*}$-action $\varphi_{\bf a} (\lambda) (z_0, z_1, z_2) =
  (\lambda^{a_0}z_0, \lambda^{a_1}z_1,  \lambda^{a_2}z_2),$
  endowed with its Bochner-flat K\"ahler  orbi-metric ${\tilde g}_{\bf
    a}$, see \cite{bryant,webster}.  Then,  on
  $\mathbb{C}P^2_{a_0,a_1,a_2} \setminus [1,0,0]$, the metric ${\tilde
    g}_{\bf a}$ is conformally isometric to the ALE scalar-flat
  K\"ahler metric ${g}_{\bf a}$ on $\mathbb{C}P^2_{-a_0,a_1,a_2}$,  given by Theorem~\ref{thm:main}. 
\end{theointro}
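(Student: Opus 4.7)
In complex dimension $2$, a K\"ahler metric is Bochner-flat if and only if its Weyl tensor is self-dual with respect to the K\"ahler orientation, while scalar-flat K\"ahler is equivalent to anti-self-dual K\"ahler with the same orientation. Self-duality being a conformally invariant property, the conformal class of $\tilde g_{\bf a}$ consists of self-dual orbi-metrics; the plan is to find a representative which is K\"ahler with respect to a \emph{different} complex structure inducing the opposite orientation, so that it becomes scalar-flat K\"ahler, and to identify it with $g_{\bf a}$. The existence of such a representative is guaranteed by a refinement, in the hamiltonian $2$-form setting of \cite{ACG}, of Derdzi\'nski's classical theorem: if $\phi$ is a suitable root of the characteristic polynomial of the hamiltonian $2$-form of $(\tilde g_{\bf a}, \tilde J)$, then on the open set $\{\phi\neq 0\}$ the rescaled metric $\hat g := \phi^{-2}\tilde g_{\bf a}$ is K\"ahler with respect to an integrable complex structure $\hat J$ of opposite orientation, and is scalar-flat.

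First, I would locate the zero set of $\phi$ on the Delzant moment polytope of the toric orbifold $(\mathbb{C}P^2_{a_0,a_1,a_2}, \tilde g_{\bf a})$. By $\TT^2$-invariance, $\phi$ descends to an affine function on the polytope. Using the explicit orthotoric description of the Bochner-flat orbi-metric from \cite{ACG,ACGT}, which in the compact toric case reproduces the formulas of \cite{bryant,webster}, a direct verification shows that one of the roots of the characteristic polynomial vanishes precisely at the vertex corresponding to $[1,0,0]$, and is strictly positive elsewhere. The metric $\hat g$ is therefore a well-defined smooth K\"ahler orbi-metric on $\mathbb{C}P^2_{a_0,a_1,a_2}\setminus [1,0,0]$, blowing up at $[1,0,0]$ in the manner expected of an ALE end.

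Second, I would identify $(\mathbb{C}P^2_{a_0,a_1,a_2}\setminus [1,0,0], \hat g, \hat J)$ with $(\mathbb{C}P^2_{-a_0,a_1,a_2}, g_{\bf a})$. The local orbifold structure of $\mathbb{C}P^2_{a_0,a_1,a_2}$ near $[1,0,0]$ is the quotient $\mathbb{C}^2/\Gamma_{\bf a}$, which matches the asymptotic model of $\mathbb{C}P^2_{-a_0,a_1,a_2}$ at infinity given in Theorem~\ref{thm:main}; together with the orientation reversal effected by $\hat J$, this fixes the identification of the underlying complex orbifolds. Both $\hat g$ and $g_{\bf a}$ are toric scalar-flat K\"ahler metrics on this orbifold admitting a hamiltonian $2$-form of the same order, so both fit the orthotoric ansatz of \cite{ACG,ACGT} and are determined by the same type of polynomial data. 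A direct calculation then shows that the conformal change $\tilde g_{\bf a}\mapsto \phi^{-2}\tilde g_{\bf a}$ transforms the polynomial prescribed by the Bochner-flat condition into the polynomial defining $g_{\bf a}$ in Theorem~\ref{thm:main}.

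The main obstacle, as I see it, is this final polynomial identification. Although the existence of a scalar-flat K\"ahler representative in the conformal class $[\tilde g_{\bf a}]$ is essentially formal from the ACG refinement of Derdzi\'nski's theorem, recognising it as the explicit metric $g_{\bf a}$ requires expressing both K\"ahler structures in compatible orthotoric coordinates and carefully tracking the effect of the conformal rescaling on the characteristic polynomial of the hamiltonian $2$-form; the formalism of \cite{ACG,ACGT} underlying the rest of the paper is precisely designed to make this type of computation manageable.
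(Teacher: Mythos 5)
Your proposal is correct in outline but runs in the opposite direction from the paper and replaces its key lemma by a computation you do not carry out. The paper starts from the explicit ALE metric $g_{\bf a}$ of Theorem~\ref{thm:main}, observes it is ambitoric, rescales by $(\xi_1-\xi_2)^{-2}$ (resp.\ $z^{-2}$ in the degenerate case $a_1=a_2$, which you should treat separately since the orthotoric ansatz does not literally apply there), and shows via the toric boundary conditions --- with a small polynomiality lemma to handle the vertex where the two momenta degenerate --- that the rescaled metric compactifies to a Bochner-flat K\"ahler orbi-metric on all of $\CP^2_{a_0,a_1,a_2}$. The identification with the Bryant--Webster metric is then obtained \emph{for free} from the uniqueness of the Bochner-flat (hence extremal) toric K\"ahler metric on $\CP^2_{a_0,a_1,a_2}$, which follows from Guan's uniqueness theorem for extremal toric metrics; finally the isometry on $\CP^2_{a_0,a_1,a_2}\setminus[1,0,0]$ comes from positivity of the conformal factor away from that vertex plus a completeness argument. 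Your route --- start from $\tilde g_{\bf a}$, apply the Derdzi\'nski/ACG correspondence to produce a scalar-flat K\"ahler partner on $\{\phi\neq 0\}$, and then match polynomial data with $g_{\bf a}$ --- would also work, and has the virtue of making the correspondence fully explicit; but it front-loads exactly the computation you flag as the main obstacle, and it quietly assumes what the paper proves via \cite{guan}, namely that the orthotoric Bochner-flat metric on $\CP^2_{a_0,a_1,a_2}$ \emph{is} the Bryant--Webster one. If you adopt your direction, you should either cite Bryant's global uniqueness for compact Bochner-flat orbifolds or import the toric uniqueness argument; otherwise the phrase ``reproduces the formulas of Bryant--Webster'' is the unproven step on which everything else rests.
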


\subsection{Resolution of isolated orbifold singularities}\label{sec:resolsub}
  ALE spaces appear 
naturally as `bubbles' at the boundary of the moduli spaces involved in the
Donaldson--Tian--Yau conjecture. Such `bubbles', as the metrics constructed  in
Theorem~\ref{thm:main}, can also be used  in conjunction with the
gluing theory of \cite{AP,  ALM, AL} as an
input  for constructing new extremal K\"ahler metrics.

The action of a finite cyclic subgroup $\Gamma\subset {\rm U}(m)$ of order $b_0>1$ on ${\mathbb C}^m$  is
generated, up to a conjugation, by
\begin{equation}\label{transformation}
\gamma (z_1, \ldots, z_m) :=(\xi^{b_1} z_1, \ \ldots, \xi^{b_m} z_m),
\end{equation}
for a $b_0$-th primitive root of unity $\xi$. We shall encode the relevant information by introducing the $(m+1)$-tuple of  (not necessarily positive) integers
${\bf b}=(b_0,\ldots,b_m)$ and denoting $\Gamma_{\bf b}$ the cyclic group generated by \eqref{transformation}. The complex  orbifold $\CC^{m}/\Gamma_{\bf
  b}$ has an isolated singularity at the origin if and only if $b_0$ is coprime with $b_j$ for every $j>0$, i.e.
\begin{equation}
  \label{isolatedorbi}
{\rm g.c.d}(b_0, b_j)=1, \ \  \forall  j \in \{1, \ldots, m\}. 
\end{equation}
We say that two $(m+1)$-tuples of integers  ${\bf c}=(c_0, \ldots, c_m)$ and ${\bf b}=(b_0, \ldots, b_m)$  are {\it congruent} (and write ${\bf b} \sim {\bf c}$) if 
$$b_0=c_0 \ {\rm and} \ b_i \equiv c_i  \ {\rm mod} \ b_0, \ \forall i
>0.$$ The congruence does not change the group $\Gamma_{\bf b}$.

 To state our next result, observe that  ${\mathbb C}P^m_{-a_0,a_1, \ldots,
   a_m}$ has only isolated orbifold singularities  if and only if the
 weight vector ${\bf a}=(a_0, \ldots, a_{m})$ has pairwise coprime components, i.e.
\begin{equation}\label{isolated}
{\rm g.c.d}(a_i, a_j)=1, \ \  \forall i\neq j \in \{0, \ldots, m\}. 
\end{equation}
The above condition is obviously stronger than \eqref{isolatedorbi}, i.e. it  implies that $\CC^m/\Gamma_{\bf a}$ has an isolated
orbifold singularity as well.   Finally, we should point out that the weighted projective space $\CP^m_{-a_0,a_1,\ldots,a_m}$ is non-singular  if and only if $a_j=1,
\forall j >0$ (see Example~\ref{e:basic}).  

We are now going to introduce the groups $\Gamma_{\bf b}\subset \U(m)$ of type $\TI$. 
\begin{defintro}\label{type-J}
Let ${\bf b}=(b_0,\ldots,b_{m})$ be a $(m+1)$-tuple of positive integers with
$b_0>1$,  such that $\CC^m/\Gamma_{\bf b}$ is an isolated orbifold
singularity, i.e.  the condition \eqref{isolatedorbi} holds. We say that $\Gamma_{\bf b}$ is of  {\it type} $\TI$ if
either:
\begin{itemize}
\item ${\bf b}$ is congruent to ${\bf a}=(b_0, 1,
  \ldots, 1)$,
\item or ${\bf b}$ is congruent to a weight vector ${\bf a}=(a_0, \ldots, a_m)$ such that ${\bf a}$
 satisfies the condition \eqref{isolated},  
% \begin{equation}\label{induction}  a_i \le  b_i, \ i = 1, \ldots \ell  \end{equation}
 and all the
  isolated orbifold singularities of $\mathbb{C}P^m_{-a_0, a_1, \ldots
    a_m}$ are of type $\TI$, in the sense that they are all modelled
  on quotients of $\CC^m$ by a group of type $\TI$.
\end{itemize}
\end{defintro} 
In practice, in order to check whether ${\bf b}$ is of type $\TI$ one can use an inductive process in which,  at each stage  either \eqref{isolated} will fail, or we have to check the above definition for weight vectors ${\bf \tilde b}=(\tilde b_0, \ldots, \tilde b_m)$ such that ${\bf \tilde b} \neq {\bf b}$ and $|\tilde b_i| \le |b_i|$. We illustrate this by the following examples.
\begin{example}\label{example:type-J} {\rm Beside the obvious case ${\bf b}=(b_0, 1, \ldots, 1)$, one can easily check that the following weight vectors
\begin{itemize}
\item ${\bf b}=(5,3,2,1)$;
\item  ${\bf b}=(q,p,1,\ldots,1)$, where $q$ and $p$ are two positive coprime
integers
\end{itemize}
define cyclic groups $\Gamma_{\bf b}$ of type $\TI$.

In the first case, $\mathbb{C}P^3_{-5,3,2,1}$ has $2$ isolated orbifold singularities, modelled on the groups
$\Gamma_{\bf c}$ with ${\bf c}$ being one of the $4$-ples   $(2, -5,3,1) \sim (2, 1, 1, 1)$ or  $(3, -5,2,1) \sim (3, 1, 2, 1)$.  The first one is obviously of type $\TI$, whereas ${\bf a}=(3,1,2,1)$ satisfies condition \eqref{isolated} and the only  isolated orbifold singularity of $\mathbb{C} P^3_{-3, 2, 1, 1}$  is modelled on the cyclic group $\Gamma_{\bf c}$  with ${\bf c}=(2, -3, 1, 1) \sim (2, 1, 1, 1)$ so we conclude $\Gamma_{\bf b}$ is of type $\TI$.

A similar inductive procedure shows that the weight vector ${\bf b}=(q,p,1,\ldots,1)$, where $q$ and $p$ are two positive coprime integers is of type $\TI$. Indeed, at each $n$-th stage we have to consider an isolated singularity modelled on a weight vector $(q_n, p_n, 1, \ldots , 1)$ with $q_n, p_n$ being coprime positive integers (so that \eqref{isolated} holds) obtained by performing the Euclide
algorithm with overestimated remainder, starting with $q_0=q, p_0=p$. Thus,  $p_n < p_{n-1}$ so  we will eventually obtain a weight vector of the form $(q_n, 1, \ldots, 1)$. 

%Finally, to demonstrate an example when $\Gamma_{\bf b}$ is not of type $\TI$, we  can take ${\bf b}= (13, 5, 2, 1)$.  In order to check wether $\Gamma_{\bf b}$ is of type $\TI$ we must consider the weight vectors $(5, -13, 2,1) \sim (5,2, 2, 1)$ and $(2, 5, -13,1) \sim (2, 1,1,1)$. The second one obviously defines a cyclic group of type $\TI$, whereas for the first one \eqref{isolated} fails, so $\Gamma_{\bf b}$ is not of type $\TI$. 
}
\end{example}

The quotients $\CC^m/\Gamma_{\bf}$ by a group of type $\TI$ are called
\emph{singularities of type $\TI$}. Similarly, a complex orbifold $\cX$ with
singularities isomorphic to a
neighbourhood of singularities of type $\TI$, will be called
\emph{an orbifold with singularities of type $\TI$}. Singularities of type $\TI$ always admit particular
resolutions,  as suggested by the above definition. Indeed, if $\Gamma_{\bf b}$ is of type $\TI$, then there exists $\bf a$  congruent to $\bf
b$, such that $\CP_{-a_0,a_1,\ldots,a_m}$ has only singularities of
type $\TI$. In addition, there is a canonical projection (see Section~\ref{s:resolution})
$$
q: \CP^m_{-a_0,a_1,\ldots,a_m}\to \CC^m/\Gamma_{\bf a}.
$$
Thus, $ \CC^m/\Gamma_{\bf a} = \CC^m/\Gamma_{\bf b}$ and the above
projection can be used to glue a non-compact weighted projective space onto the singularity.
 By definition, the orbifold singularities of
 $\CP^m_{-a_0,a_1,\ldots,a_m}$ are all of type $\TI$. So we can glue
 further weighted projective spaces of non-compact type onto these
 singularities. After a finite number of steps, this procedure stops
 and we have to glue a smooth non-compact weighted projective
 space $\CP^m_{-r, 1, \ldots, 1}$. This construction provides a smooth complex resolution
 \begin{equation}\label{eq:locresol}
\cY\to \CC^m/\Gamma_{\bf b}.
\end{equation}
referred to as \emph{a resolution of type} $\TI$
(cf. \S\ref{sec:consresol} for more details). Notice that
resolutions of type $\TI$ need not to be unique. 

The above construction of local resolutions
readily extends to provide a (generally non unique) resolution 
$$
\cXhat\to \cX
$$
for any orbifold $\cX$ with singularities of type $\TI$. We shall refer to such a resolution as {\it  a resolution of type} $\TI$. 
This leads to one of our main results:
\begin{theointro}\label{theo:extremal}
Suppose $(\cX,\omega)$ is a compact orbifold with only isolated singularities of type
 $\TI$, endowed with an extremal K\"ahler metric.

Then every complex resolution $\cXhat\to \cX$ of type $\TI$
admits K\"ahler extremal metrics in suitable K\"ahler classes.
\end{theointro}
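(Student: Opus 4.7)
The plan is to prove Theorem~\ref{theo:extremal} by induction on a complexity invariant of the resolution tree $\cXhat\to\cX$ of type $\TI$, for instance the total number of iterated weighted blow-ups one performs when following the recursive procedure of \S\ref{sec:consresol}. The base case is trivial: if no gluing is required (i.e.\ $\cX$ is already smooth and $\cXhat=\cX$), then the given extremal metric $\omega$ does the job.

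For the inductive step, let $\{p_1,\ldots,p_N\}\subset\cX$ be the isolated orbifold points, with $p_i$ modelled on $\CC^m/\Gamma_{{\bf b}_i}$ for groups $\Gamma_{{\bf b}_i}$ of type $\TI$. By Definition~\ref{type-J}, for each $i$ we choose a weight vector ${\bf a}_i\sim{\bf b}_i$ for which $\CP^m_{-a_{i,0},a_{i,1},\ldots,a_{i,m}}$ has only type $\TI$ singularities and admits the canonical projection onto $\CC^m/\Gamma_{{\bf a}_i}=\CC^m/\Gamma_{{\bf b}_i}$. Theorem~\ref{thm:main} then endows each such non-compact weighted projective space with a scalar-flat Kähler ALE metric $g_{{\bf a}_i}$ asymptotic to the flat model at the singular point $p_i\in\cX$. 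Using the Arezzo--Pacard gluing machinery in its orbifold/scalar-flat ALE form \cite{AP,ALM,AL}, we excise small neighbourhoods of the $p_i$ and glue in rescaled copies $(\CP^m_{-a_{i,0},\ldots,a_{i,m}},\epsilon_i^2 g_{{\bf a}_i})$. For sufficiently small gluing parameters $\epsilon_i$, this yields an extremal Kähler metric $\omega_{\underline{\epsilon}}$ on a partial resolution $\cX'\to\cX$, in a perturbed Kähler class depending on $[\omega]$ and the $\epsilon_i$.

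By construction, the remaining orbifold singularities of $\cX'$ are exactly the orbifold points of the glued-in weighted projective spaces; these are of type $\TI$ by the very definition, and the complexity of the induced resolution $\cXhat\to\cX'$ is strictly smaller than that of $\cXhat\to\cX$. The inductive hypothesis applied to $(\cX',\omega_{\underline{\epsilon}})$ produces an extremal Kähler metric on $\cXhat$ in a suitable Kähler class. One last bookkeeping point is to verify that every type $\TI$ resolution $\cXhat\to\cX$ factors through such an iterated composition of gluings; this follows from the recursive definition of type $\TI$ resolutions in \S\ref{sec:consresol}.

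The principal obstacle is verifying the hypotheses of the gluing theorem at each step: one needs a non-degeneracy (surjectivity modulo finite-dimensional obstructions) of the linearised scalar curvature operator, together with a compatibility between the reductive automorphisms of $(\cX,\omega)$ near $p_i$ and those of $(\CP^m_{-a_{i,0},\ldots,a_{i,m}},g_{{\bf a}_i})$ at infinity. Since the metrics produced by Theorem~\ref{thm:main} are toric and carry a hamiltonian $2$-form, the most natural route is to invoke the equivariant/toric version of the gluing theorem of \cite{AL}, which relaxes the non-degeneracy assumption at the cost of demanding a common torus symmetry near each $p_i$. This compatibility is the delicate technical point and must be checked against the structure of the extremal metric $\omega$ at each isolated singularity; the Kähler class of the resulting extremal metric on $\cXhat$ is not arbitrary but is determined, up to perturbation, by $[\omega]$ and the balancing conditions on the parameters $\epsilon_i$.
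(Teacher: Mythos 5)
Your proposal follows essentially the same route as the paper: an iteration/induction over the steps of the type-$\TI$ resolution, gluing at each stage a small copy of the scalar-flat ALE metric of Theorem~\ref{thm:main} (whose decay is controlled by Proposition~\ref{p:asymptotics}) onto the extremal orbifold metric via the Arezzo--Pacard/Arezzo--Lena--Mazzieri theory, and then applying the inductive hypothesis to the resulting extremal orbifold. The only substantive deviation is your suggestion to invoke an equivariant version of \cite{AL}; the paper instead relies on \cite{ALM} (the extremal refinement of \cite{AP} for compact orbifolds), which is the correct reference here since \cite{AL} concerns the non-compact scalar-flat ALE setting.
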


\subsection{Remarks and Applications}

\subsubsection{Characterization of singularities of type $\TI$}
\label{sec:charac}
 Definition \ref{type-J}  does not provide an
immediate list of all $\TI$ singularities. Thus,  it  would be nice to obtain a criterion, perhaps in terms of toric geometry, to determine which singularities $\CC^m/\Gamma_{\bf b}$  are of type $\TI$. We did not succeed solving this arithmetic problem so far, but it looks like an interesting problem motivated by Theorem~\ref{theo:extremal}.

In order to check that a singularity is of type $\TI$, we proceed  by
finding an explicit resolution like in~\eqref{eq:locresol}.
In dimension $2$, the quotients of the form $\CC^2/\Gamma_{\bf b}$  with ${\bf b}=(q,p,1)$ where $p$
and $q$  are two coprime integers are all of type $\TI$. Their resolution is the well-known
Hirzebruch--Jung resolution (as noticed in Example~\ref{example:type-J}). 

\begin{rem}{\rm 
In the two dimensional case,
Calderbank--Singer~\cite{calderbank-singer} constructed directly ALE scalar-flat \Ka
metrics on the Hirzebruch--Jung resolution of
$\CC^2/\Gamma_{q,p,1}$. The Calderbank--Singer metrics together
with the relevant gluing theory were used in~\cite{rollin-singer, tipler}  in order to prove a version of
Theorem~\ref{theo:extremal} for complex surfaces. Thus,
Theorem~\ref{theo:extremal} provides a generalization of these results in higher dimensions.} \hfill $\Box$
\end{rem}

\subsubsection{New examples of extremal K\"ahler manifolds}
Weighted projective spaces of 
compact type $\mathbb{C}P^m_{a_0, a_1, \ldots, a_m}$ admit extremal
K\"ahler metrics
 by \cite{bryant,webster}. Hence, Theorem~\ref{theo:extremal} applies to every
 weighted projective space of compact type with singularities of type
 $\TI$. 
In view of Example~\ref{example:type-J}, 
\begin{itemize}
\item $\CP^3_{5,3,2,1}$;
\item  $\CP^{m}_{q,p,1,\ldots,1}$, where $q$ and $p$ are two positive coprime
integers
\end{itemize}
 are such compact orbifolds,  thus they admit resolutions of type $\TI$,  endowed with extremal K\"ahler metrics. 

Quotients are another common way to introduce various families of extremal
K\"ahler orbifold with isolated singularities. Unfortunately, it is
generally not easy to tell whether the singularities are of type
$\TI$. For instance one can consider the action of $\Gamma = \Gamma_{q,p,1,1}$
on $\CC^3$ for $q>p>1$ coprime. This action extends as a holomorphic
action to the 
compactification $\CP^1\times\CP^1\times \CP^1$. The quotient
$(\CP^1)^3/\Gamma$ has $8$ isolated singularities. Many of which are of
type $\TI$. Yet, one of the singularities is modelled on
$\CC^3/\Gamma_{q,p,1,q-1}$. It is not clear whether this singularity is
of type $\TI$ in general, which brings us back to the question raised in \S\ref{sec:charac}.

\subsubsection{Toward smooth scalar-flat ALE metrics}
It seems reasonable to expects a version of Theorem~\ref{theo:extremal}
adapted to ALE spaces endowed with scalar-flat K\"ahler metrics.
This should be straightforward when 
the work in progress of Arezzo--Lena \cite{AL} is completed. Their idea
is to generalize the Arezzo--Pacard
gluing theory  to the case of (non-compact) scalar-flat ALE orbifolds.

Once  such a gluing theory is available, we could prove that the total space
 $\cY$ of any resolution
$\cY\to \CC^m/\Gamma_{\bf b}$ of type $\TI$, introduced at
 \eqref{eq:locresol},
 carries a scalar-flat K\"ahler ALE metric.

\section{ALE scalar-flat K\"ahler metrics on $\mathcal{O}(-r)$}\label{s:calabi-type}
We recall in this section the  higher dimensional extension, found by Simanca~\cite{simanca}, Pedersen--Poon~\cite{pedersen-poon},  and Rollin--Singer~\cite{rollin-singer},  of the Burns--Eguchi--Hanson--LeBrun K\"ahler scalar-flat ALE metrics~\cite{EH,LeBrun}  on the total space of ${\mathcal O}(-r)\to {\mathbb C} P^1$ (for $r=2$ the metric is hyper-K\"ahler and therefore Ricci-flat). The general setting is described in \cite{HS}, but we follow the notation of \cite{ACG} as warm-up for the general case which we present in the next section. The K\"ahler metric is taken of the following form, known  as the {\it Calabi anstaz}:
\begin{equation}\label{metric}
\begin{split}
g &= \xi \check{g}_{FS} + \frac{d\xi^2}{\Theta(\xi)} + \Theta(\xi) \theta^2 \\
\omega &= \xi  \check{\omega}_{FS} + d\xi \wedge \theta, \ \ d\theta = \check{\omega}_{FS},
\end{split}
\end{equation}
where:   
\begin{enumerate}
\item[$\bullet$] $\check{\omega}_{FS}$ is the Fubini--Study metric on ${\mathbb C} P^n$ of scalar curvature $\check{s}_{FS}= \frac{2n(n+1)}{r}$ and, by slightly abusing the notation,  we denote with the same symbols its pull-back to the total space of $\mathcal{O}(-r)$ with $r \in \mathbb N$, 
\item[$\bullet$] $\Theta(\xi)$ is a positive smooth function (called {\it profile function}), defined on the interval $(\alpha, \infty)$ for some $\alpha>0$, and satisfying
\begin{equation}\label{calabi-boundary}
\Theta(\alpha)=0, \ \ \Theta'(\alpha)= 2.
\end{equation}
\end{enumerate}
For any such profile function $\Theta(\xi)$, the corresponding metric is well-defined on the total space $\mathring{M}$ of a principal $\mathbb C^*$-bundle of degree $r$  over $\mathbb C P^n$ (note that our normalization is that $[\check{\omega}_{FS}] = r \gamma$ where $\gamma$ is the generator of $H^2(\mathbb C P^n, \mathbb Z) \cong \mathbb Z$), and the boundary conditions \eqref{toric-boundary} imply that the metric extends smoothly at infinity (see e.g.  \cite{HS,ACGT}), or,   equivalently,  that the metric is defined on the total space $M$ of the vector bundle ${\mathcal O}(-r) \to {\mathbb C} P^n$. The completeness of the metric is studied in \cite{HS}, and it is shown that it is equivalent to the growth of $\Theta(\xi)$ at $\infty$ being at most quadratic in $\xi$. It follows from \eqref{metric} that
\begin{equation}\label{J}
J d\xi  = \Theta(\xi) \theta,
\end{equation}
a relation we shall use below.  Note also that a geometric meaning of the variable $\xi$ will be given in \eqref{norm}.

The formula for the scalar curvature is \cite{ACG,HS}
\begin{equation}\label{scal}
Scal_g = \frac{\check{s}_{FS}}{\xi}  - \frac{1}{\xi^n}\Big(\xi^n \Theta(\xi)\Big)'',
\end{equation}
where, we recall,  our normalization is $\check{s}_{FS}= \frac{2n(n+1)}{r}$.
Putting 
\begin{equation}\label{solution}
\Theta(\xi)  :=   \frac{1}{\xi^n}\Big(\frac{2}{r}\xi^{n+1} + a_1\xi + a_0\Big), 
\end{equation}
we obtain a two-parameter family of formal scalar-flat solutions. The boundary conditions \eqref{toric-boundary} imply
$$a_0= \frac{2(n-r) \alpha^{n+1}}{r}, \ \ a_1= \frac{2(r-n-1)\alpha^n}{r},$$
for some positive parameter $\alpha>0$. This determines a one-parameter family of tensors $g_{\alpha}$ on $\mathcal O(-r)$ but we note that they are all homothetic: indeed,  the tensor  $\lambda g_{\alpha}$ can be written in the form $\eqref{metric}$ by substituting $\xi$ with $\lambda \xi $, and $\Theta(\xi)$ with $\Theta(\lambda \xi)/\lambda$, respectively (this introduces the change of the parameter $\alpha$ with $\alpha/\lambda$). We also note that by the results in \cite{ACG}, for $\alpha=0$ we get a flat K\"ahler metric $g_0$ defined on the total space of the principal $\mathbb C^*$ bundle of degree $-r$ over $\mathbb{C} P^n$.

As  the growth  of $\Theta(\xi)$ at infinity is linear  in $\xi$, $g_{\alpha}$ are complete at infinity~\cite{HS}, so the only verification to carry out is that the {\it defining polynomial} 
$$P_{\alpha}(x):=\frac{r}{2} x^n \Theta(x) =x^{n+1} + (r-n-1)\alpha^n x + (n-r) \alpha^{n+1}$$
has no real roots on the interval $(\alpha, \infty)$,  which in turn assures that $g_{\alpha}$ is positive-definite. Factoring out  $(x-\alpha)$, we have that
$$P_{\alpha}(x)/(x-\alpha)= \sum_{i=0}^{n-1} x^{n-i}\alpha^i +(r-n)\alpha^n$$ is strictly positive on $(\alpha, \infty)$ for $\alpha>0$ and $r \in \mathbb N$, thus showing that the $g_{\alpha}$ defines a scalar-flat, complete K\"ahler metric on $M=\mathcal O(-r)$.

We should next determine the asymptotic of $g_{\alpha}$ at $\infty$. To this end we need to express the metrics \eqref{metric} in a suitable complex chart at infinity.  We first notice that the description \eqref{metric} is adapted, at each fibre,  to the symplectic point of view of \cite{guillemin}, where the symplectic form $\omega$ is fixed, and the equivariant compatible complex structures are parametrized by the {profile functions} $\Theta(\xi)$.  Making a fibre-wise Legendre transform allows one to recover the usual parametrization in terms of relative K\"ahler potentials as follows:  For a given profile function $\Theta(\xi)$, let $u(\xi)$ be a function satisfying $u''(\xi) = 1/\Theta(\xi)$  and put $y:= u'(\xi)$. Then, by \eqref{J},  
$$d^c y=  J d y = u''(\xi) J d\xi =\frac{J d\xi}{\Theta(\xi)} = \theta, $$
showing that $dd^c y=  \check{\omega}_{FS}$ (see \eqref{metric}), where the $d^c$ operator is taken with respect to the complex structure corresponding to $\Theta(\xi)$.   This shows that if $\beta$  is  (the pull-back to $M$ of) a local $1$-form  on $\mathbb{C} P^n$ such that $\theta = dt + \beta$, where $t : \mathring{M} \to {\mathbb R}$ is locally defined (up to an additive constant) on each fibre, then ${\rm exp}(y+it)$ gives a ${\mathbb C}^{*}$-equivariant {\it holomorphic} coordinate on the fibres of $\mathring{M}$.

Furthermore, letting $F(y):= -u(\xi) + y\xi$, it is easily checked (using again  \eqref{J}) that 
\begin{equation}\label{potent}
\begin{split}
dd^c F(y) &= d J\big(-u'(\xi)d\xi + y d\xi + \xi dy \big) \\
                &= d \Big(\xi u''(\xi) J d\xi\Big)= d \Big(\frac{\xi J d\xi}{\Theta(\xi)}\Big) \\
                 &= d (\xi \theta) = d\xi \wedge \theta + \xi d \theta =\omega.  
\end{split}                 
\end{equation}
This calculation shows that a K\"ahler potential for the metric \eqref{metric} is given by $F(y) = H(\xi)$, where
\begin{equation}\label{calabi-potential}
\begin{split}
 H(\xi) &=  \int^{\xi} \frac{x}{\Theta(x)} dx\\
    &=  \frac{r}{2}\int^{\xi} \frac{x^{n+1}dx}{x^{n+1} + (r-n-1)\alpha^n x + (n-r) \alpha^{n+1}}.
    \end{split}
\end{equation}

These local observations can  be adapted to our geometric situation as follows: Let $h$ be the natural hermitian metric on the tautological line bundle ${\mathcal O}(-1) \to {\mathbb C}P^n$, induced by the Fubini--Study metric $\check{g}_{FS}$ of the base. We denote also by $h$ the corresponding  hermitian metric on the line bundle $M={\mathcal O}(-r)\to {\mathbb C} P^n$  and let $y_0= \log | \cdot |_h$  be the positive smooth function defined on 
$$\mathring{M}= \mathcal O (-r) \setminus {S}_{0},$$ where ${S}_{0}$ stands for the zero section.  One thus has
$$dd^c y_0 = {\check \omega}_{FS}, $$
where $d^c$ is taken with respect to the {\it standard} complex structure $J_0$ on the total space $M= {\mathcal O}(-r)$.  Sending $y$ to $y_0$ introduces a ${\mathbb C}^{*}$-equivariant bundle preserving diffeomorphism on $\mathring{M}$,  identifying the complex structure $J$ corresponding to $\Theta$  with the standard complex structure $J_0$ on 
$\mathring{M} \subset M$. This diffeomorphism does not preserve the symplectic form $\omega$. Instead,  according to \eqref{potent}, it sends $\omega$ to a K\"ahler form in the same K\"ahler class on $(\mathring{M},J_0)$, written as $dd^c F(y_0)$, where $d^c$ is taken with respect to $J_0$ and $F(y)=H(\xi)$ is the function determined  in terms of the profile function $\Theta$ by \eqref{calabi-potential}.  

Consider a K\"ahler metric $g_0$ of the form  \eqref{metric} with  profile function $\Theta_0(\xi):= \frac{2}{r} \xi$. This corresponds to letting $\alpha=0$ in our construction, and thus the metric $g_0$ is flat as already observed. The corresponding function $y_0(\xi)$ is then defined by the equation
$y_0'(\xi)= \frac{1}{\Theta_0(\xi)} =\frac{r}{2\xi}$, so  that  we can take $$y_0(\xi):= \frac{r}{2} \log (\xi).$$ This shows that $$F_0(y_0)= H_0(\xi)=\frac{r}{2}\xi=\frac{r}{2} {\rm exp} (\frac{2}{r} y_0), $$
or, equivalently, $F_0(y_0)= \frac{r}{2} {\rm exp} (\frac{2}{r} y_0)$ is a K\"ahler potential of a flat K\"ahler metric defined on $(\mathring{M}, J_0)$.

There is an orbifolds  holomorphic map
$$q : M = {\mathcal O}(-r) \to {\mathbb C}^{n+1}/{\Gamma}_{r},$$
where $\Gamma_{r} \subset {\rm U}(n+1)$ is the cyclic group of order $r$, generated by  $\varepsilon_{r} {\rm Id}$ for a primitive $r$-th root of unity $\varepsilon_{r}$, 
defined as follows.  Let $[x_0:  \cdots : x_n]$ be a point on ${\mathbb C}P^n$ written in  homogeneous coordinates and denote by $(x_0, \ldots, x_n)^{\otimes r}$ the corresponding generator of the fibre of ${\mathcal O}(-r) \to {\mathbb C}P^n$. Then,  for $x=([x_0: \cdots :x_n]; \lambda (x_0, \ldots, x_n)^{\otimes r}) \in M$,  
$$q(x):= \lambda^{\frac{1}{r}} (x_0, \ldots , x_n).$$
The map $q$ provides a biholomorphism between $\mathring{M}= \mathcal{O}(-r)\setminus {S}_0$ and $({\mathbb C}^{n+1}\setminus \{0\}) /{\Gamma}_{r}$, with inverse 
$$q^{-1} (z_0, \ldots z_n)= ([z_0: \cdots : z_n]; (z_0, \ldots , z_n)^{\otimes r}).$$
Note that $q$ contracts the zero section ${S}_0$ of $M$ to the origin of ${\mathbb C}^{n+1}/{\Gamma}_{r}$, so it can be thought as an orbifold blow-down map. Under the identification of $\mathring{M}$ with $({\mathbb C}^{n+1}\setminus \{0\}) /{\mathbb Z}_{r}$ via $q$, the function $y_0 : = \log |\cdot |_h $ becomes
$$y_0 \circ q^{-1}= \frac{r}{2} \log\Big( \sum_{i=0}^n |z_i|^2 \Big), $$
so that 
\begin{equation}\label{norm}
\frac{r}{2} \xi \circ q^{-1}= F_0(y_0 \circ q^{-1})= \frac{r}{2} \sum_{i=0}^{n} |z_i|^2= \frac{r}{2}||z||^2
\end{equation}
showing that $g_0$ pulls back to $r$-times the standard flat metric on ${\mathbb C}^{n+1}$. We can therefore use $q^{-1}$ to examine the asymptotic behaviour of the metric $g_{\alpha}$ for any $\alpha>0$. Recall that, up to homothety, we can assume  $0<\alpha<1$. From the above discussion and \cite[Thm.~1]{ACG}, the function $y_{\alpha}(\xi)$  and the K\"ahler potential $F_{\alpha}(y_{\alpha})$ corresponding to the $\Theta(\xi)$ can be expressed as functions of $\xi$ as follows
\begin{equation}\label{z-to-r}
\begin{split}
y_{\alpha}(\xi) & =\int_{1}^{\xi} \frac{dx}{\Theta(x)} = \frac{r}{2}\int_{1}^{\xi} \frac{x^n dx}{x^{n+1} + (r-n-1){\alpha}^n x + (n-r){\alpha}^{n+1}}\\
 F_{\alpha}(y_{\alpha}) &= \int_{1}^{\xi} \frac{x}{\Theta(x)} dx=  \frac{r}{2}\int_{1}^{\xi} \frac{x^{n+1}dx}{x^{n+1} + (r-n-1){\alpha}^n x + (n-r) {\alpha}^{n+1}}.
\end{split}
\end{equation}  
As $y_{\alpha}(\xi)$ is strictly monotone on $(\alpha, \infty)$, we can define a sooth function 
\begin{equation}\label{varphi}
\varphi(\xi):= y_{\alpha}^{-1}(y_0(\xi))= M\xi + O(1), 
\end{equation} 
where $M$ is a positive real constant.  Using
\begin{equation*}
\begin{split}
y_0(\xi) &= \frac{r}{2}\log \xi =\frac{r}{2}\int_{1}^{\xi} \frac{x^n dx}{x^{n+1}}  \\
             &= y_{\alpha}(\varphi(\xi))= \frac{r}{2}\int_{1}^{\varphi(\xi)} \frac{x^n dx}{x^{n+1} + (r-n-1){\alpha}^n x + (n-r){\alpha}^{n+1}}\\
             &= \frac{r}{2}\log \varphi(\xi)-\frac{r}{2}\int_{1}^{\varphi(\xi)}\frac{(r-n-1){\alpha}^n x + (n-r){\alpha}^{n+1}}{x(x^{n+1} + (r-n-1){\alpha}^n x + (n-r){\alpha}^{n+1})}dx,
 \end{split}
 \end{equation*} 
back in \eqref{varphi}, one finds
\begin{equation}\label{varphi-sharp}
\begin{split}
\varphi(\xi) &= \xi \exp\Big(\int_{1}^{\varphi(\xi)} \frac{(r-n-1)\alpha^n x + (n-r){\alpha}^{n+1}}{x(x^{n+1} + (r-n-1){\alpha}^n x + (n-r){\alpha}^{n+1})}dx\Big) \\
                                    &= \xi -  \frac{(r-n-1)}{n}{\alpha}^n \xi^{1-n} + O(\xi^{-n}).
 \end{split}
 \end{equation}                 
Note that, by \eqref{norm} and \eqref{z-to-r}, $F_{\alpha}(\xi) - F_0(\xi) = \frac{r(r-n-1)}{2(n-1)} \alpha^n  \xi^{1-n} + O(\xi^{-n})$ when $n >1$  whereas $F_{\alpha}(\xi) - F_0(\xi) = \frac{r(2-r)}{2} \alpha^2  \log \xi  + O(\xi^{-1})$ when $n=1$. It then follows
(using \eqref{varphi-sharp}) that
\begin{equation}\label{asymptotic}
\begin{split}
(q^{-1})^* \omega_{\alpha} &= dd^c F_{\alpha}(y_0 \circ q^{-1}) = dd^c F_{\alpha}(y_{\alpha}(\varphi (\xi \circ q^{-1}))  \\
                                  &= dd^c \Big((F_{\alpha}(\varphi(\xi)) - F_0(\varphi(\xi)) + F_0(\varphi(\xi)))\circ q^{-1} \Big)\\
                                 &= dd^c\Big( \frac{r}{2} ||z||^2 + A ||z||^{2-2n} + O(||z||^{1-2n})\Big),
\end{split}
\end{equation}                                 
when $n \ge 2$,  and
\begin{equation}
(q^{-1})^* \omega_{\alpha} = dd^c\Big( \frac{r}{2} ||z||^2 + A \log ||z|| + O(||z||^{-1})\Big),
\end{equation}
when $n=1$, which are precisely the estimates required  to apply the theory of Arezzo--Pacard~\cite{AP} (recall,  $m=n+1$ in our notation). Our argument also shows that the constant $A$ vanishes iff $r=m$, which is constant with the condition that the metric is Ricci-flat.

\section{The general case: Proof of Theorem~\ref{thm:main}}\label{s:general} 
We shall prove Theorem~\ref{thm:main} in three main steps, roughly corresponding to Sections~\ref{s:local}, \ref{s:compact} and \ref{s:infinity} below. In Sect.~\ref{s:local},  we define a family of scalar-flat K\"ahler metrics $(g_{\bf a}, \omega_{\bf a})$,  depending on $(\ell + 1)$ positive integers ${\bf a}=(a_0, \ldots, a_{\ell})$ ($\ell \ge 1$) with $a_1< \cdots < a_{\ell}$,  and $\ell$ non-negative integers $n_1, \ldots, n_{\ell}$. We follow closely the theory of hamiltonian $2$-forms from \cite{ACG,ACGT}  in order  to define the metric on a (non-compact) $2m$-dimensional manifold $M_{\bf a}^0$,  which is a principal $(\mathbb{C}^*)^{\ell}$-bundle over $S=\prod_{j=1}^{\ell} \mathbb{C}P^{n_j}$ (we allow some or all of the factors of $S$ to be points). In Sect.~\ref{s:compact}, we use the toric formalism of \cite{abreu0} in order to study how the metric closes on a bigger manifold, $M_{\bf a}$,  which contains $M^0_{\bf a}$ as a dense subset. In fact,  we show that $M_{\bf a} =\mathbb{C}P^m_{-a_0, a_1, \ldots, a_1, \ldots, a_{\ell}, \ldots, a_{\ell}}$, where each $a_j$ ($j=1, \ldots, \ell$) is counted $(n_j+1)$-times. In Sect.~\ref{s:infinity},  we return again to the setting of Sect.~\ref{s:local} in order to construct a complex chart at infinity,  and obtain in this chart a suitable  asymptotic expansion for a K\"ahler potential of the metric,  in terms of the Euclidean norm. This will allow us to conclude that the scalar-flat K\"ahler metrics  we construct are ALE.

\subsection{Definition and local properties of the metric}\label{s:local} We introduce in this subsection an  ansatz  from \cite{ACG} which describes a family of scalar-flat K\"ahler metrics of complex dimension $m$,  admitting a Hamiltonian $2$-form of order $\ell$ with $1 \le \ell \le m$. The case $\ell=1$ is the  {\it the Calabi ansatz} that has been already introduced for the construction in  the previous section. The specific choice of constants in the construction, which may appear  somewhat mysterious to the Reader, is made in order to achieve the conclusion of Proposition~\ref{p:local} below.

We first start by introducing the smooth manifold on which the metric will be defined.
\begin{dfn}\label{d:space} Let $\ell\ge 1$ be a positive integer,  $(n_1, \ldots, n_{\ell})$ an $\ell$-plet of non-negative integers,  and ${\bf a}=(a_0, a_1, \ldots, a_{\ell})$ an $(\ell+1)$-plet of  positive integers satisfying $a_1< \cdots <a_{\ell}$.   Consider the manifold $S = \prod_{j=1}^{\ell} \mathbb{C} P ^{n_j}$. In the case when some (or all) $n_j=0$, the corresponding factor is taken to be a point, so that $S$ is a smooth complex manifold of dimension $\sum_{j=1}^{\ell} n_j \ge 0$. Let $\pi: P_{\bf a}\to S$ denote the total space of the principal $T^{\ell}$-bundle over $S$ with Chern class 
$$c_1(P_{\bf a})=\Big(\Big(\frac{a_0 \cdots a_{\ell}}{a_1}\Big)\gamma_1, \ldots, \Big(\frac{a_0 \cdots a_{\ell}}{a_{\ell}}\Big)\gamma_{\ell} \Big) \in H^2(S, \mathbb{Z}) \otimes \mathbb{R}^{\ell},$$
where $\gamma_i$ is the ample generator of $H^2(\mathbb{C}P^n_j, \mathbb{Z})$, viewed also as a homology class on $S$ by the obvious pull-back. When $n_j=0$, we take $\gamma_j =0$.
\end{dfn}
For each $n_j>0$, we  equip $\mathbb{C}P^{n_j}$ with a Fubini--Study metric $(g_{\mathbb{C}P^{n_j}}, \omega_{\mathbb{C}P^{n_j}})$ of scalar curvature $\frac{2n_j(n_j+1)}{r_j}$, where the positive real constants $r_j, j=1, \ldots, \ell$ are defined by (a choice that will be explained later)
\begin{equation}\label{rj1}
r_j = (-1)^{\ell-j} \Big(\frac{a_0 \cdots a_{\ell}}{a_j}\Big)^{\ell-2} a_0 \prod_{k\neq j}(a_j - a_k),
\end{equation}
and where  the product in the r.h.s. is over indices $k,j \in \{1, \ldots, \ell\}$.  We then consider on $P_{\bf a}$ the $\mathbb{R}^{\ell}$-valued connection $1$-form $\tilde \theta = (\tilde \theta_1, \ldots, \tilde \theta_{\ell})$ defined by
\begin{equation}\label{tilde-theta-0}
d \tilde \theta_j =  \frac{1}{r_j}\Big(\frac{a_0 \cdots a_{\ell}}{a_j}\Big) \pi^* \omega_{\mathbb{C}P^{n_j}},
\end{equation}
for any $j$ with $n_j \ge 1$,  and $d\tilde \theta_j =0$  when $n_j =0$. To simplify notation,  in what follows we shall tacitly assume $g_{\mathbb{C}P^{n_j}}$ and $\omega_{\mathbb{C}P^{n_j}}$ being zero for each $n_j=0$, so that \eqref{tilde-theta} holds for each $j=1, \ldots, \ell$.

Let  $\alpha_1 < \alpha_2 < \cdots < \alpha_{\ell} < 0$ be the  real numbers  defined in terms of ${\bf a}=(a_0, \ldots, a_{\ell})$ by setting
\begin{equation}\label{alphaj}
\alpha_j:= -\frac{a_0\cdots a_{\ell}}{a_j}, \ \ j=1, \ldots, \ell.
\end{equation}   It is then easily seen that in therms of the $\alpha_j$'s,
\begin{equation}\label{rj2}
r_j= \frac{(-1)^{\ell-j}}{\prod_{k\neq j}(\alpha_j-\alpha_k)},
\end{equation}
where the product is over indices $k, j \in \{1, \ldots, \ell\}$.

\smallskip

The general ansatz from  \cite{ACG}  produces a scalar-flat K\"ahler metric $(g_{\bf a}, \omega_{\bf a})$ on the smooth  $2m$-dimensional manifold  
\begin{equation}\label{M0a}
M^0_{\bf a}:=\D_{\ell} \times P_{\bf a},
\end{equation}
where  $m=\ell + \sum_{j=1}^{\ell} n_j$, $P_{\bf a}$ is the principal $T^{\ell}$ bundle over $S$ introduced in Definition~\ref{d:space},  and
$$\D_{\ell}:= (\alpha_1, \alpha_2)\times \cdots \times (\alpha_{{\ell}-1}, \alpha_{\ell})\times (0, \infty).$$
We shall still denote by $\pi : M^0_{\bf a} \to S$ the projection from $D_{\ell} \times P_{\bf a}$ to $S$, which acts trivially on the first factor and equals to the bundle projection $\pi : P_{\bf a} \to S$ on the second. 

The K\"ahler metric is defined  in terms  of the degree $m$ polynomial  
\begin{equation}\label{p(x)}
p(x)= 2\prod_{j=1}^{\ell}(x-\alpha_j)^{1+n_j}
\end{equation} 
which we shall refer to as  the  {\it characteristic polynomial}. Denote by  
$$\Theta(x)=2\prod_{j=1}^{\ell}(x-\alpha_j)$$ the  minimal polynomial of $p(x)$. We then consider the K\"ahler metric $(g_{\bf a}, \omega_{\bf a})$ on $M^0_{\bf a}$ defined as follows:
\begin{equation}\label{order-ell}
\begin{split}
g_{\bf a} = &\sum_{r=1}^{\ell}(-1)^{\ell-r+1}\Big(\prod_{j=1}^{\ell}(\alpha_r -\xi_j)\Big)\pi^* g_{{\mathbb C}P^{n_r}} \\
       & + \sum_{j=1}^{\ell} \Big(\frac{p(\xi_j)\Delta_j}{\Theta(\xi_j)F_j(\xi_j)} d\xi_j^2 + \frac{\Theta(\xi_j)F_j(\xi_j)}{p(\xi_j)\Delta_j} \Big( \sum_{r=1}^{\ell} \sigma_{r-1}(\hat \xi_j) \theta_r\Big)^2\Big) \\
\omega_{\bf a} = & \sum_{r=1}^{\ell}(-1)^{\ell - r+1}\Big(\prod_{j=1}^{\ell}(\alpha_r-\xi_j)\Big)\pi^*\omega_{\mathbb{C}P^{n_r}} + \sum_{r=1}^{\ell} d\sigma_r \wedge \theta_r,  \\
 d\theta_r =& (-1)^r\sum_{j=1}^{\ell} \Big(\alpha_j^{\ell-r}\pi^* \omega_{\mathbb{C}P^{n_j}}\Big),\\
J d\xi_j = & \frac{\Theta(\xi_j)F_j(\xi_j)}{\Delta_jp(\xi_j)} \sum_{r=1}^{\ell} \sigma_{r-1}(\hat \xi_j) \theta_r, \ \
J \theta_r = (-1)^r \sum_{j=1}^{\ell} \frac{p(\xi_j)}{\Theta(\xi_j)F_j(\xi_j)} \xi_j^{\ell-r}d\xi_j, 
\end{split}
\end{equation}
where:
\begin{enumerate}
\item[$\bullet$] $(\xi_1, \ldots, \xi_{\ell})$ are independent coordinates defined over the product of intervals
$$\D_{\ell}= (\alpha_1, \alpha_2)\times \cdots \times (\alpha_{{\ell}-1}, \alpha_{\ell})\times (0, \infty),$$
\item[$\bullet$] $(\theta_1, \cdots, \theta_{\ell})$ are $1$-forms on $P_{\bf a}$  expressed in terms of the connection $1$-forms $(\tilde \theta_1, \ldots, \tilde \theta_{\ell})$ introduced in \eqref{tilde-theta-0} by
\begin{equation}\label{tilde-theta-1}
\theta_r = (-1)^{r-1}\Big(\sum_{j=1}^{\ell}r_j\alpha_j^{\ell-r-1} {\tilde \theta}_j\Big).
\end{equation}
\item[$\bullet$] $\Delta_j = \prod_{k\neq j}(\xi_j-\xi_k)$,   $\sigma_r$ denotes the $r$-th elementary symmetric function of the variables $(\xi_1, \ldots, \xi_{\ell})$,  whereas  $\sigma_{r-1}(\hat \xi_j)$ stands for the $(r-1)$-th elementary symmetric function of $\xi_i$'s  with $\xi_j$ omitted (and we set $\sigma_{0} :=1$);
\item[$\bullet$]  $F_1(x), \ldots, F_{\ell}(x)$ are degree $m$ polynomials defined by 
\begin{equation}\label{polynomials}
\begin{split}
F_{1}(x)&=\cdots = F_{\ell-1}(x)= p(x)=2\prod_{k=1}^{\ell}(x-\alpha_k)^{(n_k+1)}\\
F_{\ell}(x)&=2x\Big(\sum_{r=1}^{m} b_{m-r}x^{m-r}\Big)\\
\end{split}
\end{equation}
with 
\begin{equation}\label{conditions}
F_{\ell}''(x) =p''(x), \ \ F_{\ell}'(0)>0.
\end{equation}
\end{enumerate}
In order to see that $g_{\bf a}$ is positive-definite on  ${M}_{\bf a}$,  notice that $(-1)^j F_j (x)>0$ on $(\alpha_j, \alpha_{j+1})$ for $j=1, \ldots, \ell-1$ by construction,  whereas  \eqref{conditions} implies that $F_{\ell}(x)>0$ on $(0, \infty)$ (by the first condition in \eqref{conditions}, $F_{\ell}''(x) = p''(x)>0$ on that interval).  Furthermore, according to  \cite{ACG},  \eqref{order-ell} defines a  K\"ahler metric which has zero scalar curvature. It  is Ricci-flat iff  $F_{\ell}'(x)= p'(x)$.
It will be useful to notice (again by the classification results in \cite{ACG})  that \eqref{order-ell} defines a flat K\"ahler metric if, moreover, we have have $F_{\ell}(x)=p(x)$.

\begin{rem}\label{r:horizontal} {\rm Note that \eqref{order-ell} yields a $g_{\bf a}$-orthogonal decomposition of the tangent space of $M_{\bf a}^0$ (defined in \eqref{M0a})
$$TM^0_{\bf a} = \mathcal{H} \oplus  \mathcal V$$ where $\mathcal{H}$ is defined to be the common annihilator of $(\theta_1, \ldots, \theta_{\ell})$ and $(d\xi_1, \ldots, d\xi_{\ell})$,  whereas $\mathcal V$ is the tangent space to the fibres of the projection $\pi : M^0_{\bf a} \to S$. We shall accordingly refer to elements of ${\mathcal H}$ and ${\mathcal V}$ as {\it horizontal} and {\it vertical} vectors.   \hfill $\Box$} \end{rem}

The condition  \eqref{conditions} allows one to determine the coefficients $b_{m-1}, \ldots, b_{1}$ in terms of the $\alpha_i$'s (or,  equivalently, in terms of ${\bf a}$) whereas $b_0$ is a free constant (which we will determine later, see \eqref{b0}) satisfying
\begin{equation}\label{einstein}
b_0= \frac{1}{2}p'(0)=\Big(\sum_{j=1}^{\ell}(1+n_j)\frac{1}{\alpha_j}\Big)\prod_{k=1}^{\ell} \alpha_k^{n_k+1}
\end{equation}
iff the metric is Ricci-flat (the latter being equivalent to $F_{\ell}'(x)= p'(x)$ because of \eqref{conditions}). 

\smallskip
In what follows, we shall re-write \eqref{order-ell}  by using the symplectic toric formalism of \cite{abreu0} and \cite{ACGT}.  

Let $K_j, \ j=1, \ldots, \ell$ denote the vertical vector fields on $P_{\bf a}$,  defined by $\theta_r(K_j)=\delta_{jr}$. These  are commuting Killing vector fields for the K\"ahler structure \eqref{order-ell},  corresponding to angular coordinates on each fibre $T^{\ell}$ of $P_{\bf a}$. It follows from the explicit form \eqref{order-ell} of the metric that the elementary symmetric functions  $\sigma_j, \ j=1, \ldots, \ell$ of $\xi_1, \ldots, \xi_{\ell}$ are the  corresponding momenta for the K\"ahler form $\omega_{\bf a}$.  In terms of the coordinates $(\sigma_1, \ldots, \sigma_{\ell})$, the metric \eqref{order-ell} is defined on $M^0_{\bf a}=\mathring{\Sigma}_{\ell} \times P_{\bf a}$ (see e.g. \cite[Prop. 7]{ACGT}), where 
 \begin{equation}
 \begin{split}
\mathring{\Sigma}_{\ell} =\Big\{(\sigma_1, \ldots, \sigma_\ell) \ : & \ (-1)^{\ell-j+1}\sum_{r=0}^{\ell}(-1)^{r} \sigma_r \alpha_j^{\ell-r} >0, j=1, \ldots, \ell, \\ 
                                                                                     &  (-1)^{\ell-1}\sigma_{\ell} >0 \Big\},
 \end{split}
 \end{equation} 
and we have put $\sigma_0 :=1$.  The metric \eqref{order-ell} is then written on $\mathring{\Sigma}_{\ell}\times P_{\bf a}$, see~\cite[Prop.~6 \& 11]{ACG} as
\begin{equation}\label{toric-bundle-1}
\begin{split}
g_{\bf a} = & \sum_{j=1}^{\ell}\Big((-1)^{\ell-j+1}\sum_{r=0}^{\ell}(-1)^r\sigma_r \alpha_j^{\ell-r}\Big) \pi^* g_{\mathbb{C}P^{n_j}} \\
 &+ \sum_{i,j=1}^{\ell}\Big(({\rm Hess}(U^0))_{ij} d\sigma_id\sigma_j  + ({\rm Hess}(U^0))^{-1}_{ij} \theta_i\theta_j \Big), \\
\omega_{\bf a} = &\sum_{j=1}^{\ell}\Big((-1)^{\ell-j+1}\sum_{r=0}^{\ell}(-1)^r\sigma_r \alpha_j^{\ell-r}\Big) \pi^* \omega_{\mathbb{C}P^{n_j}} + \sum_{i=1}^{\ell} d\sigma_i \wedge \theta_i,
\end{split}
\end{equation}
where
\begin{equation*}\label{potential-fibre}
U^0(\sigma_1, \ldots, \sigma_{\ell}):=- \sum_{j=1}^{\ell} \int^{\xi_j} \frac{p(x)\prod_{k=1}^{\ell}(x-\xi_k)}{\Theta(x)F_j(x)} dx.
\end{equation*}
It will be useful to consider,  in an affine-invariant way,   the  $(\ell \times \ell)$-matrix valued functions ${\bf G}=({\rm Hess}(U^0)_{ij})$ and  ${\bf H}=(({\rm Hess}(U^0))^{-1}_{ij}) = (g_{\bf a}(K_i,K_j))$ as the Gram matrices associated to two mutually inverse functions on $\mathring{\Sigma}_{\ell}$ with values respectively in $S^2\mathfrak{t}_{\ell}$   and $S^2\mathfrak{t}_{\ell}^*$,  where $\mathfrak{t}_{\ell}= {\rm span}(K_1, \ldots, K_{\ell})$ is the Lie algebra associated to the $T^{\ell}$ fibre of $P_{\bf a}$. It is shown in the proof of \cite[Prop.~9]{ACGT}, that  ${\bf H}(\sigma_1, \ldots, \sigma_{\ell})$
extends smoothly on the closure ${\Sigma}_{\ell}$  of $\mathring{\Sigma}_{\ell}$ and  satisfies,  at any face $F$ of $\mathring{\Sigma}_{\ell}$,  the positivity and the boundary conditions of \cite[Prop.~1]{ACGT} with respect to the {\it weighted} inward normals
 \begin{equation}\label{fibre-normals}
 \begin{split}
 u_j &= c_j \Big(\alpha_{j}^{\ell-1}, \ldots, (-1)^{r-1}\alpha_j^{\ell-r}, \ldots, (-1)^{\ell-1}\Big), \ j=1, \ldots m, \\
 u_0 &= c_0\Big(0, \ldots, 0, (-1)^{\ell-1}\Big),
 \end{split}
 \end{equation}
 where 
 \begin{equation}\label{determination}
 \begin{split}
 c_j &=\frac{2}{\Theta'(\alpha_j)}=\frac{1}{\prod_{k\neq j}(\alpha_j-\alpha_k)}=(-1)^{\ell-j}r_j,\\ 
 c_0& =\frac{2}{\Big(\frac{\Theta F_{\ell}}{p}\Big)'(0)} = \frac{(-1)^{m-\ell}\prod_{j=1}^{\ell}\alpha_j^{n_j}}{b_0},
 \end{split}
 \end{equation}
 where the product at the first line runs over $j, k \in \{1, \ldots, \ell\}$. (Note that, \eqref{determination} explains the definition of $r_j$ in \eqref{rj1} and \eqref{rj2}.)
 
Specifically, for this choice of inward normals, we have 
\begin{equation}\label{toric-boundary}
{\bf H}_{\sigma}(u_r, \cdot)=0; \ \ d{\bf H}_{\sigma} (u_r,u_r) =2u_r,
\end{equation}
for the normal $u_r$ of a facet $F_r \subset \Sigma_{\ell}\subset \mathfrak{t}_{\ell}^*$ and $\sigma \in F_r$,  where the differential $d{\mathbf H}$ is viewed as a smooth $S^2\mathfrak{t}_{\ell}^*\otimes
\mathfrak{t}_{\ell}$-valued function on ${\Sigma}_{\ell}$,  by identifying the cotangent space of $\mathfrak{t}^*_{\ell}$ at  a point $\sigma \in \Sigma_{\ell}\subset \mathfrak{t}_{\ell}^*$ with $\mathfrak{t}_{\ell}$.

Consider the affine map
\begin{equation}\label{x}
 x_j = \frac{(-1)^{\ell-j}r_j}{\alpha_j}\Big(\sum_{r=0}^{\ell}(-1)^{r}\sigma_r \alpha_j^{\ell-r} \Big),\ j=1, \ldots, \ell.
 \end{equation}
Using \eqref{rj2} and the Vandermonde identities (see e.g.~\cite[(94)]{ACG}) 
 \begin{equation}\label{vandermonde}
 \begin{split}
 \sum_{j=1}^{\ell} \frac{\alpha_j^{\ell-s}}{\prod_{k\neq j}(\alpha_j -\alpha_k)} &= \delta_{s,1}, s=1, \ldots, \ell,  \\  
 \sum_{j=1}^{\ell} \frac{\alpha_j^{-1}}{\prod_{k\neq j}(\alpha_j -\alpha_k)}  &= \frac{(-1)^{\ell+1}}{\alpha_1\cdots \alpha_{\ell}},
 \end{split}
 \end{equation}
(the second identity in \eqref{vandermonde} can be derived from the first applied to $\alpha_0, \ldots, \alpha_{\ell}$ with $\alpha_0:=0$) 
we compute
\begin{equation*}
\begin{split}
\sum_{j=1}^{\ell} x_j &= \sum_{r=0}^{\ell}\Big( (-1)^r \sigma_{r}\Big(\sum_{j=1}^{\ell} \frac{\alpha_j^{\ell-r-1}}{\prod_{k\neq j} (\alpha_j - \alpha_k)}\Big)\Big)\\
                                &= 1 -\frac{1}{\alpha_1\cdots \alpha_{\ell}} \sigma_{\ell},
                                \end{split}
                                \end{equation*}
showing that   the polytope  $\mathring{\Sigma}_{\ell}$ can be identified, via the map \eqref{x},  with the standard unbounded simplex 
\begin{equation}\label{standard}
\mathring{\rm P}_{\ell}= \Big\{(x_1, \ldots, x_{\ell}) \ : \ x_i>0, \ (\sum_{j=1}^{\ell} x_j -1)>0 \Big\} \subset \mathbb{R}^{\ell}
\end{equation}
Recall that the $\sigma_r$'s are the elementary symmetric functions of  the variables $(\xi_1, \ldots, \xi_{\ell})$, so that \eqref{x} identifies  
\begin{equation}\label{M0a-new}
M_{\bf a}^0 ={\D_{\ell}}\times P_{\bf a} \cong \mathring{\rm P}_{\ell} \times P_{\bf a}, 
\end{equation}
(see \eqref{M0a}),  an identification which we tacitly use throughout the paper.
 
Finally,  using \eqref{alphaj},  the inward normals $u_j$ to $\mathring{\Sigma_{\ell}}$ defined by \eqref{fibre-normals} transform under \eqref{x} to the inward normals 
\begin{equation}\label{normal-v}
v_j = \Big(\frac{a_0\cdots a_{\ell}}{a_j}\Big)e_j, \ j=1, \ldots, \ell
\end{equation}
to $\mathring{\rm P}_{\ell}$, where $e_j$ is the standard basis of $\mathbb{R}^{\ell}$, and we can further put
\begin{equation}\label{b0}
c_0: =  \frac{1}{a_0^{\ell}(a_0 \cdots a_{\ell})^{\ell-2}}
\end{equation} so that $u_0$ transforms to
\begin{equation}\label{normal-v-0}
v_0 = \Big(\frac{a_0 \cdots a_{\ell}}{a_0}\Big) (e_1 + \cdots + e_{\ell}).
\end{equation}
With this choice, the free constant $b_0$ is determined via \eqref{determination}. Using \eqref{alphaj} and \eqref{p(x)},  the Ricci-flat condition \eqref{einstein} then becomes
\begin{equation}\label{ricci-weight}
a_0 = \sum_{j=1}^{\ell}(n_j+1) a_j.
\end{equation}

By \eqref{x}, the functions $(x_1, \ldots, x_{\ell})$ are momenta for the Killing vector fields
\begin{equation}\label{x-fields}
X_j := \frac{1}{\alpha_j\prod_{k\neq j}(\alpha_j -\alpha_k)}\Big(\sum_{r=1}^{\ell}(-1)^{r} \alpha_j^{\ell-r}K_r \Big).
\end{equation} 
Using standard Vandermonde identities again (see e.g. \cite[App.~B]{ACG}) one can invert \eqref{tilde-theta-1}
\begin{equation}\label{tilde-theta}
\tilde \theta_j = -\sum_{r=1}^{\ell} \sigma^{\alpha}_{r-1}(\hat{\alpha}_j)\alpha_j \theta_r,
\end{equation}
where  $\sigma^{\alpha}_{r-1}(\hat \alpha_j)$ denotes the $(r-1)$-th elementary symmetric function of the $(\ell-1)$ elements $\{\alpha_1, \ldots, \alpha_{\ell}\}\setminus \{\alpha_j\}$.  A direct inspection  using \eqref{x-fields} then 
shows that $\tilde \theta_i(X_j)= \delta_{i}^j$, i.e. $X_j$ are the standard $S^1$ generators for the $T^{\ell}$ action on $P_{\bf a}$.

\smallskip
We summarize the discussion in the following
\begin{prop}\label{p:local}  Under the identification \eqref{M0a-new}, the K\"ahler metric \eqref{order-ell} can be written on  $\mathring{\rm P}_{\ell}\times P_{\bf a}$ {\rm(}where $\mathring{\rm P}_{\ell}$ is the interior of the standard unbounded simplex  in $\mathbb{R}^{\ell}$  defined in \eqref{standard}{\rm)}   as follows
\begin{equation}\label{toric-bundle-2}
\begin{split}
g_{\bf a} = & \sum_{j=1}^{\ell} \frac{1}{r_j}\Big(\frac{a_0\cdots a_{\ell}}{a_j}\Big) x_j \pi^* g_{\mathbb{C}P^{n_j}}\\
                  & + \sum_{i,j=1}^{\ell}\Big({\rm Hess}(\tilde U^0)_{ij}dx_i dx_j + ({\rm Hess}(\tilde U^0))^{-1}_{ij}\tilde \theta_i \tilde \theta_j\Big),\\
\omega_{\bf a} = & \sum_{j=1}^{\ell} \frac{1}{r_j}\Big(\frac{a_0\cdots a_{\ell}}{a_j}\Big) x_j \pi^* \omega_{\mathbb{C}P^{n_j}} + \sum_{i=1}^{\ell} dx_i \wedge \tilde \theta_i,
\end{split}
\end{equation}
where 
\begin{equation}\label{fibre-potential}
\tilde U^0(x_1, \ldots, x_{\ell}) = U^0(\sigma_1, \ldots, \sigma_{\ell})=- \sum_{j=1}^{\ell} \int^{\xi_j} \frac{p(t)\prod_{k=1}^{\ell}(t-\xi_k)}{\Theta(t)F_j(t)} dt.
\end{equation}
Furthermore,  ${\bf H}(x)  = \big({\rm Hess}(\tilde U^0)\big)^{-1}$ extends smoothly over the boundary  of $\mathring{\rm P}_{\ell}$,  and satisfies the positivity and boundary conditions of \cite[Prop.~1]{ACGT} at each face with respect to the weighted inward normals \eqref{normal-v} and \eqref{normal-v-0} whereas the lattice of $\mathbb{R}^{\ell}$ corresponding of circle subgroups of $P_{\bf a}$ is $\mathbb{Z}^{\ell} = {\rm span}_{\mathbb Z}\{e_1, \ldots, e_{\ell}\}$.
\end{prop}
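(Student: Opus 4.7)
The strategy is to transport the known description \eqref{toric-bundle-1} of the metric in the moment coordinates $(\sigma_1,\ldots,\sigma_\ell)$ to the new coordinates $(x_1,\ldots,x_\ell)$ via the affine map \eqref{x}, and to track how the Gram matrix $\mathbf H$ and the weighted boundary normals \eqref{fibre-normals} change under this transformation.

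First I would verify directly that the horizontal part of \eqref{toric-bundle-1} takes the desired form: substituting $\sum_{r=0}^{\ell}(-1)^r \sigma_r\alpha_j^{\ell-r} = \alpha_j x_j/\bigl((-1)^{\ell-j}r_j\bigr)$, which is just a rewriting of \eqref{x}, and then using $\alpha_j = -a_0\cdots a_\ell/a_j$ from \eqref{alphaj}, immediately yields the coefficient $(1/r_j)(a_0\cdots a_\ell/a_j)\, x_j$ in front of $\pi^* g_{\mathbb{C}P^{n_j}}$. The same calculation applies to $\omega_{\bf a}$.

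Next, I would handle the vertical part. The affine relation $x_j = A_{jr}\sigma_r + \text{const}$ (with $A_{jr} = (-1)^{\ell-j+r}(r_j/\alpha_j)\alpha_j^{\ell-r}$) dualizes to an identification of Killing generators: the $X_j$ defined in \eqref{x-fields} are precisely the linear combinations of the $K_r$ whose symplectic moments are the $x_j$. The inverse identity \eqref{tilde-theta}, combined with a direct application of the Vandermonde identities \eqref{vandermonde}, gives $\tilde\theta_i(X_j)=\delta_{ij}$, which the authors have already recorded just before the statement. Consequently $\sum_i d\sigma_i\wedge\theta_i = \sum_j dx_j\wedge \tilde\theta_j$, and setting $\tilde U^0(x):=U^0(\sigma(x))$ one has $\mathrm{Hess}(\tilde U^0) = A^{-T}\,\mathrm{Hess}(U^0)\,A^{-1}$, so that $\sum_{i,j}\mathrm{Hess}(U^0)_{ij}d\sigma_id\sigma_j = \sum_{i,j}\mathrm{Hess}(\tilde U^0)_{ij}dx_idx_j$ and, dually, $\sum_{i,j}\mathbf H_\sigma^{ij}\theta_i\theta_j = \sum_{i,j}\mathbf H_x^{ij}\tilde\theta_i\tilde\theta_j$. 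This gives formula \eqref{toric-bundle-2}, and the expression \eqref{fibre-potential} for $\tilde U^0$ is just the previous one relabelled.

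The remaining point is that the boundary/positivity conditions of \cite[Prop.~1]{ACGT} survive the affine change of coordinates. Smoothness of $\mathbf H_x$ on the closure of the image polytope is automatic since $x$ is affine in $\sigma$, and the known smoothness and conditions of $\mathbf H_\sigma$ on $\overline{\mathring\Sigma_\ell}$ from \cite[Prop.~9]{ACGT} are recalled in \eqref{toric-boundary}. Inward normals transform covariantly under the linear part of the coordinate change, so I would compute $A u_j$ and $A u_0$ for the weighted normals \eqref{fibre-normals}: using \eqref{rj2}, \eqref{determination}, \eqref{alphaj} and the Vandermonde sums \eqref{vandermonde} (exactly as in the calculation $\sum_j x_j = 1 - \sigma_\ell/(\alpha_1\cdots\alpha_\ell)$ carried out just above), one obtains $A u_j = v_j = (a_0\cdots a_\ell/a_j)\,e_j$ and $A u_0 = v_0 = (a_0\cdots a_\ell/a_0)(e_1+\cdots+e_\ell)$, matching \eqref{normal-v}--\eqref{normal-v-0}. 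Finally, the lattice statement is direct from the already-recorded fact $\tilde\theta_i(X_j)=\delta_{ij}$: the $X_j$ generate circle subgroups of the $T^\ell$-fibre of $P_{\bf a}$ with period basis $e_1,\ldots,e_\ell$. The main bookkeeping obstacle is in the last paragraph, namely checking that the weighted normals $u_j$ transform exactly to $v_j$ with the chosen normalizations of $c_j$ and $c_0$ in \eqref{determination} and \eqref{b0}; this is a clean but unavoidable Vandermonde computation.
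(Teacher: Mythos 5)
Your proposal is correct and follows essentially the same route as the paper, whose "proof" is precisely the discussion preceding the statement: rewrite \eqref{toric-bundle-1} under the affine momentum change \eqref{x}, use the Vandermonde identities \eqref{vandermonde} to identify the image polytope and to transform the weighted normals \eqref{fibre-normals} into \eqref{normal-v}--\eqref{normal-v-0}, and read off the lattice from $\tilde\theta_i(X_j)=\delta_{ij}$. The only cosmetic difference is that you make explicit the covariance of the Hessian and of $\sum_i d\sigma_i\wedge\theta_i$ under the affine change, which the paper leaves implicit.
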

\begin{rem}\label{r:ell=1}
{\rm In the case $\ell=1$, \eqref{order-ell} reduces to  \eqref{metric} by considering $p(x)=x^{n+1}$, $a_0=r, a_1=1$,  and introducing the variable $\xi: = \xi_1- \alpha_1 \in (\alpha_1,  \infty)$.  In  this case $\mathring{\rm P}_1=(1, \infty)$ and the boundary condition the metric $g_{\bf a}$ satisfies at $1$ is with respect to the inward normal normal $v_0=e_1$, showing the metric closes smoothly.}  \hfill $\Box$
\end{rem}

\subsection{Orbifold extension of the metric over boundary points.}\label{s:compact}
In this section we shall use the well-known description of toric orbifolds in terms of rational simple convex  polytopes~\cite{LT},  and its coordinate-free version developed in \cite{ACGT} according to which the orbifold structure is encoded by a polytope ${\rm P} \subset {\mathbb R}^m$, a lattice $\Lambda\in \mathbb{R}^m$ and  inward normals to ${\rm P}$  which belong to $\Lambda$. We start with following preliminary observation.
\begin{lemma}\label{l:wps-symplectic} Let ${\bf a}=(a_0, \ldots, a_{\ell})$ be any weight vector and $V_{\bf a}= \mathbb{C}P^{\ell}_{-a_0, a_1, \ldots, a_{\ell}}$ the non-compact weighted projective space defined in Definition~\ref{d:orbifold}. Then,  $V_{\bf a}$ admits a symplectic form $\omega^0$  and an $\omega^0$-hamiltonian action of an $\ell$-dimensional torus $T^{\ell}$, such that the momentum image of $V_{\bf a}$ is ${\rm P}_{\bf a}$,  the lattice in $\mathbb{R}^{\ell}$ generated by the circle subgroups of $T^{\ell}$ is 
$$\Lambda_{\bf a}= {\rm span}_{\mathbb Z}\Big\{\Big(\frac{a_0\cdots a_{\ell}}{a_0}\Big)(e_1 + \cdots + e_{\ell}), \Big(\frac{a_0\cdots a_{\ell}}{a_j}\Big)e_j, \ j= 1, \ldots, \ell \Big\},$$
where $\{e_1, \ldots, e_{\ell}\}$ stands for the standard basis of ${\mathbb R}^{\ell}$ and the weighted inward normals to ${\rm P}_{\bf a}$ determining the orbifold structure are
$$v_0=\Big(\frac{a_0\cdots a_{\ell}}{a_0}\Big)(e_1 + \cdots + e_{\ell}), v_j=\Big(\frac{a_0\cdots a_{\ell}}{a_j}\Big)e_j, \ j= 1, \ldots, \ell.$$
\end{lemma}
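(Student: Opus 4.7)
The plan is to realize $V_{\bf a}$ as a K\"ahler reduction of $\mathbb{C}^{\ell+1}$, from which the symplectic form $\omega^0$, the Hamiltonian $T^\ell$-action, and the Delzant--Lerman--Tolman data $({\rm P}_{\bf a},\Lambda_{\bf a},\{v_j\})$ will all emerge simultaneously.

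On the open set $U := \{z\in\mathbb{C}^{\ell+1}\,:\,(z_1,\ldots,z_\ell)\neq 0\}$ equipped with its flat K\"ahler form, the compact subgroup $K\subset\mathbb{C}^*$ of the action $\rho_{\bf a}$ is the circle of weights $(-a_0,a_1,\ldots,a_\ell)$, with moment map
\[\mu_K(z) = -a_0|z_0|^2 + \sum_{j=1}^\ell a_j|z_j|^2.\]
The coprimality $\gcd(a_0,\ldots,a_\ell)=1$ ensures that $K$ acts locally freely on every level set. Choosing the reduction level $c := a_0\cdots a_\ell > 0$ (the motivation for this specific choice will become clear below), the hypersurface $\mu_K^{-1}(c)$ is smooth and the K\"ahler reduction $V := \mu_K^{-1}(c)/K$ is a K\"ahler orbifold, biholomorphic to $V_{\bf a}$ by standard GIT. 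Take $\omega^0$ to be the induced K\"ahler form.

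The standard $T^{\ell+1}\subset(\mathbb{C}^*)^{\ell+1}$ commutes with $K$ and descends to a Hamiltonian $T^\ell := T^{\ell+1}/K$-action on $(V_{\bf a},\omega^0)$. Since the $T^{\ell+1}$-moment map on $U$ is $y_j = |z_j|^2$, the $T^\ell$-moment image of $V_{\bf a}$ is (after quotienting by $K$-translation) the affine slice
\[\{y\in\mathbb{R}^{\ell+1}_{\geq 0}\,:\,-a_0 y_0 + \textstyle\sum_{j=1}^\ell a_j y_j = c\},\]
the condition $(y_1,\ldots,y_\ell)\neq 0$ being automatic from $c>0$. Introducing coordinates $x_j := a_j y_j/c$ on this slice (eliminating $y_0$ via the linear equation) identifies the moment image with the polytope ${\rm P}_{\bf a}$ of \eqref{standard}. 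Dually, under the identification $\mathfrak{t}_\ell = \mathfrak{t}_{\ell+1}/\mathbb{R}\langle(-a_0,a_1,\ldots,a_\ell)\rangle \cong \mathbb{R}^\ell$ thereby determined, a direct pairing calculation shows that the image in $\mathfrak{t}_\ell$ of $e_0\in\mathbb{Z}^{\ell+1}$ is exactly $v_0 = (a_1\cdots a_\ell)(e_1+\cdots+e_\ell)$ and the image of $e_j$ for $j\geq 1$ is $v_j = (a_0\cdots a_\ell/a_j)e_j$; the scale $c = a_0\cdots a_\ell$ is precisely what produces these integer coefficients.

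Hence the lattice $\Lambda_{\bf a}$ of circle subgroups of $T^\ell$, being the image of $\mathbb{Z}^{\ell+1}$ in $\mathfrak{t}_\ell$, is generated by $v_0,v_1,\ldots,v_\ell$; and these $v_j$ are also the weighted inward normals of ${\rm P}_{\bf a}$ encoding the orbifold data. Internal consistency of the construction is guaranteed by the identity $-a_0 v_0 + \sum_{j=1}^\ell a_j v_j = 0$, which exactly reproduces the defining weight vector of $K$. The main subtlety of the argument is the dual-lattice/scaling computation in the previous paragraph, which dictates the specific choice $c = a_0\cdots a_\ell$; the orbifold stabilizer structure then follows, e.g.\ the well-known $\mathbb{Z}/a_{i_0}$ stabilizer at the fixed point $[0:\ldots:1:\ldots:0]\in V_{\bf a}$ (with $1$ in position $i_0$) is recovered as $\Lambda_{\bf a}$ modulo the sublattice generated by the $\ell$ weighted normals meeting at the corresponding vertex of ${\rm P}_{\bf a}$.
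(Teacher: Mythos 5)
Your proof is correct and follows essentially the same route as the paper's: both realize $V_{\bf a}$ as the K\"ahler reduction of $\CC^{\ell+1}$ by the weighted circle of weights $(-a_0,a_1,\ldots,a_\ell)$ at the level $c=a_0\cdots a_\ell$, with the residual $T^\ell$-action furnishing the polytope, normals and lattice. The only difference is one of direction and explicitness — the paper starts from the labelled polytope $(\mathring{\rm P}_\ell,\Lambda_{\bf a},v_j)$ and invokes the Delzant--Lerman--Tolman construction to produce the reduction, while you start from the reduction and compute the moment image and the images of the standard basis of $\ZZ^{\ell+1}$ in $\mathfrak{t}_\ell$ directly; the computations check out (note only that local freeness of $K$ on $\mu_K^{-1}(c)$, $c>0$, holds irrespective of the coprimality hypothesis, which instead governs the effectiveness of the residual torus action).
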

\begin{proof} This is similar with an argument from \cite{abreu} dealing with the compact weighted projective spaces,  but we use here a different lattice to avoid orbifold quotients (see \cite[Remark~2]{ACGT}).

The (non-compact) polytope $\mathring{\rm P}_{\ell} \subset {\mathbb R}^{\ell}$ is simple and convex,  whereas  the inward normals $v_0, v_1, \ldots, v_{\ell}$ span a lattice $\Lambda_{\bf a}$ (contained in but in general different from  the standard lattice ${\mathbb Z}^{\ell}$).  The Delzant construction~\cite{Delzant,LT}  then associates to $(\mathring{\rm P}_{\ell}, \Lambda_{\bf a})$ a (non-compact) symplectic orbifold $(V_{\bf a}, \omega^0)$, obtained as the K\"ahler quotient $(V_{\bf a}, \omega, J^0)$ of ${\mathbb C}^{\ell +1}$ (endowed whit its standard flat K\"ahler structure $(g^0, J^0, \omega^0)$),   with respect to the weighed $S^1$-action 
$$\rho_{\bf a}(t) (z_0, \ldots, z_{\ell}) = (e^{-2\pi a_0t}z_0, e^{2\pi a_1 t}z_1, \ldots, e^{2\pi a_{\ell}} z_{\ell})$$
at the regular value $c:=a_0\cdots a_{\ell}$ of the corresponding momentum map 
$$\mu_{\bf a} (z_0, \ldots, z_{\ell}) =  \frac{1}{2}\Big(-a_0|z_0|^2 + a_1|z_1|^2 + \cdots a_{\ell}|z_{\ell}|^2\Big).$$ 
GIT then identifies $(V_{\bf a}, J^0)$ with the non-compact weighted projective space $\mathbb{C}P^{\ell}_{-a_0, a_1, \ldots, a_{\ell}}$ of weight-vector ${\bf a}=(a_0, \ldots, a_{\ell})$, as constructed in Definition~\ref{d:orbifold}. \end{proof}

A simple variation of the arguments in the proof of Lemma~\ref{l:wps-symplectic} allows us to construct a non-compact symplectic orbifold,   starting with the (non-compact) polytope $\mathring{\rm P}_{\ell} \subset {\mathbb R}^{\ell}$ endowed with the same inward normals $v_0, v_1, \ldots, v_{\ell}$, but considering the standard lattice $\Lambda_{0}={\rm span}_{\mathbb Z}\{e_1, \dots, e_{\ell}\} \subset \mathbb{R}^{\ell}$ instead of $\Lambda_{\bf a}$. As noticed in \cite[Rem.~2]{ACGT}, the toric orbifold resulting from the Delzant construction associated to these data, which we denote by $V_{[\bf a]}$,   is  a $T^{\ell}$-equivariant symplectic orbifold quotient of $V_{\bf a}$ by an abelian finite group $G_{\bf a}$ of order $(a_0\ldots a_{\ell})^{\ell-1},$  isomorphic to $\Lambda_0/\Lambda_{\bf a}$ (see \cite[\&~2.3]{abreu} for an explicit definition of the quotient). We thus have
\begin{equation}\label{orbifold-fibre}
V_{[\bf a]} = V_{\bf a}/G_{\bf a}.
\end{equation}
\begin{dfn}\label{hatMa} Let $\hat M_{\bf a}:= P_{\bf a} \times_{T^{\ell}} V_{[\bf a]}$ be  the (toric) orbifold  fibre-bundle associated to the principal $T^{\ell}$-bundle $P_{\bf a}$ defined in Definition~\ref{d:space},  whose fibres are isomorphic to the toric orbifold $V_{[\bf a]}$ defined in \eqref{orbifold-fibre}.
\end{dfn}

By a straightforward orbifold version of Theorem~2 in \cite{ACGT} (noting that we do not have to deal with blow-downs to construct a $T^{\ell}$-invariant K\"ahler metric out of generalized Calabi data on $\hat M_{\bf a}$), ${M}_{\bf a}^0$ (as defined by \eqref{M0a-new}) can be realized  as an open dense subset of $\hat M_{\bf a}$.  The fact that the vertical part of $(g_{\bf a}, \omega_{\bf a})$ (in the sense of Remark~\ref{r:horizontal}) satisfy the boundary and positivity conditions of \cite[Prop.~1]{ACGT} with respect to the inward weighted normals $v_j, j=0, 1, \ldots, \ell$  to $\mathring{\rm P}_{\ell}$ (see Proposition~\ref{p:local}) imply 
\begin{cor}  The tensor fields $(g_{\bf a}, \omega_{\bf a})$ extend smoothly (in the orbifold sense) to  $\hat M_{\bf a}$.
\end{cor}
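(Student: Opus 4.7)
The plan is to reduce the claim to an application of the orbifold version of \cite[Theorem~2]{ACGT} already invoked to identify $M^0_{\bf a}$ as an open dense subset of $\hat M_{\bf a}$. First, by Remark~\ref{r:horizontal}, the tensor fields split $g_{\bf a}$-orthogonally as $g_{\bf a} = g^{\mathcal H}_{\bf a} + g^{\mathcal V}_{\bf a}$, where the horizontal piece is the pull-back $\sum_j \frac{x_j}{r_j}\bigl(\frac{a_0\cdots a_\ell}{a_j}\bigr)\pi^* g_{\mathbb{C}P^{n_j}}$ of metrics from $S$, and the vertical piece is the toric tensor $\sum_{i,j}(\mathrm{Hess}\,\tilde U^0)_{ij}\, dx_i dx_j + \mathbf{H}_{ij}\tilde\theta_i\tilde\theta_j$ along the $T^\ell$-fibres (and analogously for $\omega_{\bf a}$, as in \eqref{toric-bundle-2}). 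I would treat the vertical and horizontal parts separately, and then check their compatibility with the toric facet-collapses in each fibre $V_{[\bf a]}$.

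For the vertical piece the essential work has already been done in Proposition~\ref{p:local}: the matrix $\mathbf{H}(x) = (\mathrm{Hess}\,\tilde U^0)^{-1}$ extends smoothly to the closure of $\mathring{\mathrm{P}}_\ell$ and satisfies the positivity and boundary conditions of \cite[Prop.~1]{ACGT} at every facet, with respect to the weighted inward normals $v_0, v_1, \ldots, v_\ell$ and the standard lattice $\Lambda_0 = \mathbb{Z}^\ell$. By \cite[Prop.~1]{ACGT} this is precisely the criterion for the restriction of $(g^{\mathcal V}_{\bf a}, \omega^{\mathcal V}_{\bf a})$ to each fibre $\mathring{\mathrm{P}}_\ell \times T^\ell$ to extend smoothly, in the orbifold sense, to the Delzant toric orbifold $V_{[\bf a]}$ determined by $(\mathrm{P}_\ell, \Lambda_0, \{v_r\})$. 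This yields a fibrewise smooth extension over all of $P_{\bf a}$.

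For the horizontal piece, the coefficient $\frac{x_j}{r_j}\bigl(\frac{a_0\cdots a_\ell}{a_j}\bigr)$ is affine in $x$, hence extends smoothly to the closure of $\mathring{\mathrm{P}}_\ell$, and vanishes simply on the facet $F_j = \{x_j = 0\}$. The key compatibility to check, and the only genuinely subtle point, is that this vanishing matches the collapse of the torus orbit along $F_j$ in $V_{[\bf a]}$: the circle generated by the weighted inward normal $v_j = \bigl(\frac{a_0\cdots a_\ell}{a_j}\bigr)e_j$ collapses there, while by construction \eqref{tilde-theta-0} the curvature of the dual component $\tilde\theta_j$ of the connection on $P_{\bf a}$ is exactly $\frac{1}{r_j}\bigl(\frac{a_0\cdots a_\ell}{a_j}\bigr)\pi^*\omega_{\mathbb{C}P^{n_j}}$. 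This is precisely the matching condition required in the generalized Calabi data formalism of \cite{ACGT}, so that the horizontal Fubini--Study factor gets absorbed smoothly into the collapsing torus direction along $F_j$. Assembling the vertical extension and the horizontal pull-back via the orbifold version of \cite[Thm~2]{ACGT} then produces a smooth orbifold K\"ahler structure on $\hat M_{\bf a} = P_{\bf a} \times_{T^\ell} V_{[\bf a]}$, which by construction restricts to $(g_{\bf a}, \omega_{\bf a})$ on $M^0_{\bf a}$. The anticipated main obstacle, namely the boundary behaviour of $\tilde U^0$ at the facets of $\mathring{\mathrm{P}}_\ell$, has already been absorbed into Proposition~\ref{p:local}; no further analytic input is needed beyond a careful unwinding of the normals and lattice identifications set up in \eqref{normal-v}--\eqref{normal-v-0}.
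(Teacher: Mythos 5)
Your treatment of the vertical part is exactly the paper's argument: the Corollary is stated there as an immediate consequence of Proposition~\ref{p:local} (the matrix ${\bf H}=({\rm Hess}(\tilde U^0))^{-1}$ extends smoothly to the closure of $\mathring{\rm P}_{\ell}$ and satisfies the positivity and first-order boundary conditions of \cite[Prop.~1]{ACGT} with respect to the weighted normals \eqref{normal-v}--\eqref{normal-v-0} and the lattice $\mathbb{Z}^{\ell}$) combined with an orbifold version of \cite[Thm.~2]{ACGT}; no further computation is given.

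Where you go astray is the horizontal part. The ``key compatibility'' you single out --- that the vanishing of the coefficient $\frac{1}{r_j}\bigl(\frac{a_0\cdots a_{\ell}}{a_j}\bigr)x_j$ on the facet $\{x_j=0\}$ must match a collapsing circle so that ``the horizontal Fubini--Study factor gets absorbed smoothly into the collapsing torus direction'' --- is not part of this Corollary, and as stated it is false on $\hat M_{\bf a}$. The space $\hat M_{\bf a}=P_{\bf a}\times_{T^{\ell}}V_{[{\bf a}]}$ of Definition~\ref{hatMa} still fibres over $S=\prod_j\mathbb{C}P^{n_j}$; over the preimage of such a facet the factor $\mathbb{C}P^{n_j}$ is fully present and the tensor $x_j\,\pi^*g_{\mathbb{C}P^{n_j}}$ simply \emph{vanishes} there. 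This is precisely why the Corollary asserts only that the \emph{tensor fields} extend smoothly, and why the sentence immediately following it records that the horizontal part of $g_{\bf a}$ degenerates on the preimage in $\hat M_{\bf a}$ of a facet of $\mathring{\rm P}_{\ell}$: one obtains a smooth but degenerate symmetric tensor on $\hat M_{\bf a}$, not a K\"ahler metric. The absorption of the Fubini--Study factors into collapsing directions is the \emph{blow-down} phenomenon, which the paper explicitly says it does not need at this stage and which is carried out later, on the different space $M_{\bf a}$ of \eqref{Ma}, via Proposition~\ref{general} and Corollary~\ref{c:compactification}; the normalization \eqref{tilde-theta-0} of the connection is indeed what makes that later step work, but invoking it here proves the wrong statement. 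For the present Corollary the horizontal part requires nothing beyond the observation you already make --- the coefficient is affine in the momenta, hence smooth up to the boundary --- and all of the actual content lives in the vertical boundary conditions.
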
 

As noticed in Remark~\ref{r:ell=1}, the above Corollary yields an orbifold scalar-flat K\"ahler when $\ell=1$ and $a_0, a_1$ are arbitrary co-prime positive integers, but  for each factor $\mathbb{C}P^{n_r}$ with $n_r >0$ for $r=1, \ldots, \ell-1$,  the horizontal part of $g_{\bf a}$ degenerates on the pre-image in $\hat M_{\bf a}$ of a facet of $\mathring{\rm P}_{\ell}$.   We shall show next that, nevertheless,  up to (isometric) orbifold coverings and quotients,  the metric \eqref{order-ell}  is the pull-pack to $\hat M_{\bf a}$ of a genuine (non-degenerate) K\"ahler orbi-metric, still denoted by $(g_{\bf a}, \omega_{\bf a})$, which is defined on a different orbifold, namely 
\begin{equation}\label{Ma}
M_{\bf a} := \mathbb{C}P^m_{-a_0, a_1, \ldots,  a_1, \ldots, a_{\ell},\ldots, {a}_{\ell}},
\end{equation}
 where each ${a}_r$ is counted $(n_r+1)$-times. This phenomenon has been studied in \cite{ACGT} (in the non-singular case) where it has been refereed to as a {\it blow-down}.

\begin{rem}\label{r:blow-down} {\rm  Note that the metric \eqref{order-ell}, which we defined on $M^0_{\bf a}= \mathring{\rm P}_{\ell} \times P_{\bf a}$, can be lifted to any cover of $M^0_{\bf a}$, in particular to 
$$M^0 = \mathring{\rm P}_{\ell} \times P,$$
where $P$ is the principal $T^{\ell}$-bundle over $S$ with `primitive' Chern class $c_1(P)=(\gamma_1, \ldots, \gamma_{\ell})$ (compare with Definition~\ref{d:space}). In terms of the local theory developed in Sect.~\ref{s:local}, this amounts to observe that generators of circle subgroups of $P$  will then be the vector fields  $\Big(\frac{a_0 \cdots a_{\ell}}{a_j}\Big) X_j$, or equivalently, the lattice generated by circle subgroups of $P$ in Proposition~\ref{p:local} is 
$${\rm span}_{\mathbb Z}\Big\{\Big(\frac{a_0 \cdots a_{\ell}}{a_j}\Big) e_j, j=1, \ldots, \ell \Big\}.$$
On the other hand, there is a natural geometric way to realize $M_{\bf a}$ (see \eqref{Ma}) as a {\it complex}  orbifold blow-down of the manifold (compare with Definition~\ref{hatMa})
$$N_{\bf a} := P \times_{T^{\ell}} V_{\bf a}.$$  To see this explicitly,  consider the smooth complex $(m+1)$-dimensional manifold  which is the total space of the vector bundle 
\begin{equation}\label{bundle}
E = \mathcal{O}\oplus \mathcal{O}(-1)_{\mathbb{C}P^{n_j}} \oplus \cdots \oplus  \mathcal{O}(-1)_{\mathbb{C}P^{n_{\ell}}}\to S=\mathbb{C}P^{n_1}\times \cdots \times \mathbb{C}P^{n_{\ell}},
\end{equation}
where  $m=\ell + n_1 + \cdots + n_{\ell}$, $\mathcal{O}$ denotes the trivial holomorphic vector bundle over $S$ and $\mathcal{O}(-1)_{\mathbb{C}P^{n_j}}$ is the pull back to $S$ of the tautological holomorphic line bundle over the factor $\mathbb{C}P^{n_j}$.  We then  consider  the fibre-wise $\mathbb{C}^*$-action $\rho_{\bf a}$ on $E$,  associated to the weight-vector ${\bf a}=(a_0, a_1, \ldots, a_{\ell})$ with $a_1<\cdots <a_{\ell}$ (see Definition~\ref{d:orbifold}). The quotient of $E\setminus E_0$ (where $E_0\cong \mathcal{O}$ is the the trivial bundle over $S$, realized as a sub-bundle of $E$ corresponding to first  summand in \eqref{bundle}) is an $m$-dimensional complex orbifold identified  with $N_{\bf a}$.  A natural holomorphic map $q : N_{\bf a} \to M_{\bf a} = \mathbb{C}P^m_{-a_0, a_1, \ldots, a_1, \ldots, a_{\ell}, \ldots, a_{\ell}}$ is defined as follows: Let  $x=(x_1, \ldots, x_{\ell}) \in S$ be a point of $S$ with $x_j \in \mathbb{C}P^{n_j}$,  and  write $x_j=[x_0^j: \ldots x_{n_j}^j]$ in homogenous coordinates. Denote by $(x_0^j, \ldots, x_{n_j}^j)$ the corresponding generator  of the complex line $\Big(\mathcal{O}(-1)_{\mathbb{C}P^{n_j}}\Big)_{x_j}$  which gives rise to a coordinate $\lambda_j$ on the fibre over $x_j$. Using this trivialization,   we introduce ``homogenous coordinates''  $[\lambda_0: \ldots : \lambda_{\ell}]$ for any point of the fibre $\mathbb{C}P^{\ell}_{-a_0, a_1, \ldots, a_{\ell}}$ of $\hat{M}_{\bf a}$ over $x$. Then, we let
\begin{equation*}
\begin{split}
q([x_0^1:\cdots :x_{n_1}^1], \ldots, [x_0^{\ell}: \cdots : x_{n_{\ell}}^{\ell}]; [\lambda_0:\cdots:\lambda_{\ell}])=& \\
    [\lambda_0: \lambda_1 x_0^1: \cdots \lambda_1 x_{n_1}^1: \cdots :\lambda_{\ell} x_0^{\ell}: \cdots \lambda_{\ell} x^{\ell}_{n_{\ell}}], &
 \end{split}
 \end{equation*}
which is a well-defined,  holomorphic map from $N_{\bf a}$ to $M_{\bf a}$.

Dealing with orbifold covers and quotients as above is quite tedious in general,  and we are not going to use the map $q$ in the sequel.  We shall instead adopt a different (symplectic) point of view, reflecting the fact that both $\hat{M}_{\bf a}$ and $M_{\bf a}$ are toric orbifolds.  In this setting, an equivariant  manifestation of the map $q$  is given in \eqref{x-tilde} below.} \hfill $\Box$ \end{rem}

The main idea to associate the  construction \eqref{order-ell} to  the non-compact weighted projective space $M_{\bf a}$ (see \eqref{Ma}) is  the  following.  Each $(\mathbb{C}P^{n_j}, g_{\mathbb{C}P^{n_j}}, \omega_{\mathbb{C}P^{n_j}})$ is also a toric K\"ahler manifold, so that, after lifting the torus action  on each factor to $M^0_{\bf a}$, $(g_{\bf a}, \omega_{\bf a})$ becomes a toric metric under the action of $T^m.$ By rewriting  \eqref{order-ell} in the moment/angle  formalism of \cite{abreu0,guillemin}, we shall then show that the metric \eqref{order-ell} is defined on the interior of the unbounded standard simplex $\mathring{\rm P}_m \subset \mathbb{R}^m,$  and that the corresponding symplectic potential $\tilde U_{\bf a}$ satisfies the  positivity and boundary conditions of \cite[Prop.~1]{ACGT} (or equivalently of \cite[Thm.~2]{abreu}) with respect to the inward normals associated to  $\mathring{\rm P}_m$  via Lemma~\ref{l:wps-symplectic}. This will be then enough to conclude, using again the general theory in \cite{abreu0,abreu}, that $(g_{\bf a}, \omega_{\bf a})$ gives rise to an orbifold K\"ahler metric on $M_{\bf a}$. We shall thus not worry about the lattices along the way (i.e. we shall ignore toric orbifold coverings and quotients) as the only ingredient we need is the asymptotic behaviour of the symplectic potential (which we  shall describe explicitly in terms of \eqref{order-ell}).

\smallskip
Each $(\mathbb{C}P^{n_j}, g_{\mathbb{C}P^{n_j}}, \omega_{\mathbb{C}P^{n_j}})$ is a toric K\"ahler manifold whose Delzant polytope (with respect to the chosen normalization for $\omega_{\mathbb{C}P^{n_j}}$) is the compact simplex
$$\Big\{(y^{j}_{1}, \ldots, y^j_{n_j}) \ : \ y^j_{k_j} \ge 0, \ \ (r_j- \sum_{{k_j}= 1}^{n_j} y^j_{k_j})\ge 0, k_j=1, \ldots, n_j, j=1, \ldots, \ell\Big\},$$
endowed with the standard inward normals $e^j_{k_j}, k_j =1, \ldots, n_j$ and $e^j_0:= - \sum_{k_j=1}^{n_j} e^j_{k_j},$ where $(e^j_{k_j})$, $k_j=1, \ldots , n_j$  is the standard basis of $\mathbb{R}^{n_j}, j=1, \ldots, \ell$ (with the obvious assumption to discard $y^j_{k_j}$  and $e^j_{k_j}$ when $n_j=0$). It can  be mapped to the standard simplex
$${\rm S}_{n_j}=\{(x^j_{1}, \ldots,  x^j_{n_j}) \ : \ x^j_{k_j} \ge 0, \ \ (1- \sum_{{k_j} = 1}^{n_j} x^j_{k_j})\ge 0\}, $$
via the affine map
\begin{equation}\label{x-j}
x^j_{k_j} = \frac{1}{r_j} y^j_{k_j},
\end{equation}
thus transforming the inward normals to 
$$v^j_{k_j} := r_ je^j_{k_j}, k_j =0, \ldots, n_j.$$
In momentum-angular coordinates, the Fubini--Study metric (restricted on the pre-image $\mathring{(\mathbb{C}P^{n_j})}$ by the moment map $x^j$ of the interior $\mathring{\rm S}_{n_j}$ of ${\rm S}_{n_j}$) takes the form \cite{abreu0, guillemin}
\begin{equation}\label{FS}
\begin{split}
g_{{\mathbb C}P^{n_j}} &= ({\rm Hess}(U^j))_{rs} dx_r^j dx_s^j  + ({\rm Hess}(U^j))^{-1}_{rs}  dt_s^j dt_r^j,\\
U^j &= \frac{r_j}{2}\Big(\sum_{k=1}^{n_j} x_k^j \log x_k^j   + (1-\sum_{s=1}^{n_j} x^j_s) \log (1-\sum_{s=1}^{n_j }x^j_s)\Big).
\end{split}
\end{equation}
The $(n_j\times n_j)$-matrix-valued function ${\bf H}^j = \Big(({\rm Hess}(U^j))^{-1}_{rs}\Big)$ extends smoothly over $\mathring{\rm S}_{n_j}$ and satisfies the boundary conditions \eqref{toric-boundary} with respect to the inward normals
$v^j_{k}$ to ${\rm S}_{n_j}$.

The commuting Killing vector fields $X^j_{k}$ of $(\mathbb{C}P^{n_j}, g_{\mathbb{C}P^{n_j}}, \omega_{\mathbb{C}P^{n_j}})$ whose momenta are $x^j_{k}$, respectively,  can be lifted to $M_{\bf a}^0$ by letting
\begin{equation}\label{killing-base}
\begin{split}
\hat X^j_{k} &= (X^j_{k})^H + x^j_{k}\Big((-1)^{\ell-j+1} \sum_{r=1}^{\ell}(-1)^{r}\alpha_j^{\ell-r}K_r\Big),\\
                   &=(X^j_k)^H +  \frac{1}{r_j}\Big(\frac{a_0\cdots a_{\ell}}{a_j}\Big)x^j_{k_j} X_j,  \  \ j=1, \ldots, \ell, k_j=1, \ldots, n_j
\end{split}                   
\end{equation}
where $(X^j_k)^H$ denotes the horizontal lift of $X^j_k$ to $M_{\bf a}^0$, see Remark~\ref{r:horizontal}. It is easy to check that $\hat X^j_k$  are commuting Killing fields on $(M,g,J,\omega)$, with momenta
\begin{equation}\label{lift}
\begin{split}
\hat x^j_{k}& = (-1)^{\ell-j+1} x^j_{k}\prod_{j=1}^{\ell}(\alpha_j-\xi_j)=(-1)^{\ell-j+1}x^j_{k}\Big(\sum_{r=0}^{\ell} (-1)^r\alpha_j^{\ell-r}\sigma_r\Big)\\
               &= \frac{1}{r_j}\Big(\frac{a_0\cdots a_{\ell}}{a_j}\Big) x^j_{k} x_j,
\end{split}               
\end{equation}
where we have used \eqref{x} to derive the last equality.~\footnote{See \cite[Lemma 5]{ACGT1} for a more general statement.}

Thus, $(g_{\bf a}, \omega_{\bf a})$ admit $m$ commuting Killing vector fields $\{K_1, \ldots, K_{\ell}, \hat X^j_{k_j}\}$, $k_j=1, \ldots, n_j, j=1, \ldots, \ell$,  with respective momenta $(\sigma_1, \ldots, \sigma_{\ell}, \hat x^j_{k_j})$ or,   accordingly to \eqref{x-fields},  $\{X_1, \ldots, X_{\ell}; \hat X^j_{k_j}\}$ with momenta 
$$(x_1, \ldots, x_{\ell}; \hat x^1_{1}, \ldots, \hat x^1_{n_1}; \ldots, \hat x^{\ell}_1, \ldots, \hat x^{\ell}_{n_{\ell}}) \in \mathbb{R}^m.$$                 
Note that the map $(x_j;  x^j_{k_j}) \mapsto (x_j; \hat x^j_{k_j}),$ with $\hat x^j_{k_j}$ given by \eqref{lift}, sends bijectively the domain $\mathring{\rm P}_{\ell} \times \mathring{\rm S}_{n_1}\times \cdots \times \mathring{\rm S}_{n_{\ell}}$ to the polytope
\begin{equation}\label{comput}
\begin{split}
\Big\{(x_j, \hat x^j_{k_j}) :  \  \hat x^j_{k_j} >0, \  \Big(\frac{1}{r_j}\Big(\frac{a_0\cdots a_{\ell}}{a_j}\Big)x_j -\sum_{k_j=1}^{n_j} \hat x^j_{k_j}&\Big) >0 \\
                                           \Big(\sum_{j=1}^{\ell} x_j - 1&\Big) >0 \Big\},
\end{split}
\end{equation}  
where $j \in \{1, \ldots, \ell\}$  and  $k_j \in \{1, \ldots, n_j\}$.                                        
Letting finally
\begin{equation}\label{x-tilde}
\begin{split}
\tilde{x}^j_{k_j} &: = \Big(\frac{1}{r_j}\Big(\frac{a_0\cdots a_{\ell}}{a_j}\Big)\Big)^{-1}\hat x^j_{k_j}=x_jx^j_{k_j}, \\
\tilde{x}^0_j &: = \Big( x_j-\sum_{k_j=1}^{n_j} \tilde x^j_{k_j}\Big),
\end{split}
\end{equation}
we obtain  that in the new momentum coordinates $(\tilde x^0_j, \tilde x^j_{k_j})$  the domain $\mathring{\rm P}_{\ell} \times \mathring{\rm S}_{n_1}\times \cdots \times \mathring{\rm S}_{n_{\ell}}$ transforms to
$$\mathring{\rm P}_m =\Big\{(\tilde x^j_{k_j}>0, \ (\sum_{j=0}^{\ell}\sum_{k_j=1}^{n_j} \tilde x^j_{k_j} -1)>0, j=0, 1, \ldots, \ell, k_j =1, \ldots n_j \Big\},$$
where we have set $n_0:=\ell$.
In the sequel, we shall abuse notation and denote at times the momentum coordinates $(x_j; \hat x^j_{k_j})$ by $(x_1, \ldots, x_m)$ and their affine modification $(\tilde x^j_{n_j})$ by $(\tilde x_1, \ldots, \tilde x_m)$,  respectively, where we recall that $\ell + \sum_{j=1}^{\ell} n_j = m$ is the complex dimension of $M_{\bf a}^0$.

The metric \eqref{order-ell} becomes {\it toric}, and  we are going to rewrite it using the formalism of \cite{abreu0,guillemin}. To this end, we introduce the following
\begin{dfn}\label{d:toric} Let $\mathring{M}_{\bf a}: = \D_{\ell} \times \mathring{P}_{\bf a} \cong \mathring{\rm P}_{\ell} \times  \mathring{P}_{\bf a}$, where $\mathring{P}_{\bf a}$ is the pre-image in $P_{\bf a}$ of the interior $\prod_{j=1}^{\ell} \mathring{\Sigma}_{n_j}$ of the product of simplexes $\prod_{j=1}^{\ell} \Sigma_{n_j}$, 
under the projection $\pi: P_{\bf a} \to \prod_{j=1}^{\ell} \mathbb{C}P^{n_j}$ composed with the momentum map $(x^1, \ldots, x^{\ell}) : \prod_{j=1}^{\ell} \mathbb{C}P^{n_j} \to \prod_{j=1}^{\ell} {\Sigma}_{n_j}$.
\end{dfn}
Note that  $\mathring{M}_{\bf a} \subset M_{\bf a}^0$ is an open dense subset on which $(X^j_0, \hat X^j_k))$ are functionally independent vector fields. The latter  give rise (by using the almost-complex structure $J_{\bf a}$ associated to $(g_{\bf a}, \omega_{\bf a})$) to  a complete system of momentum/angular coordinates  $(x_1, \ldots, x_m; t_1, \ldots, t_m)$ and, via \eqref{x-tilde},   $(\tilde x_1, \ldots, \tilde x_{m}, \tilde t_1, \ldots, \tilde t_m)$, respectively.  The metric \eqref{order-ell} then can be written  on  $\mathring{\rm P}_m \times T^m,$  in the following  general form~\cite{abreu0}:
\begin{equation}\label{toric}
\begin{split}
g_{\bf a} =& \sum_{p,q=1}^m\Big({\rm Hess}(U_{\bf a})_{pq}dx_pdx_q + ({\rm Hess}(U_{\bf a}))^{-1}_{pq}dt_pdt_q\Big) \\
              =&  \sum_{p,q=1}^m\Big({\rm Hess}(\tilde U_{\bf a})_{pq}d\tilde x_pd\tilde x_q + ({\rm Hess}(\tilde U_{\bf a}))^{-1}_{pq}d\tilde t_pd\tilde t_q\Big),\\
\omega_{\bf a} = & \sum_{p=1}^m dx_p \wedge dt_p= \sum_{p=1}^m d\tilde x_p \wedge d\tilde t_p,
\end{split}
\end{equation}
where $\tilde U_{\bf a}(\tilde x_1, \ldots, \tilde x_m)=U_{\bf a}(x_1, \ldots, x_m)$ is a convex function defined on $\mathring{\rm P}_m$. We are going to express  $\tilde U_{\bf a}$ in  terms of the coordinates $x_j$ and $x^j_k$. 

\begin{lemma}\label{potential} The potential function $U_{\bf a}$  of \eqref{toric} is given by
\begin{equation}\label{s-potential}
U_{\bf a}= \tilde U^0(x_1,\ldots, x_{\ell}) + \sum_{j=1}^{\ell} \frac{1}{r_j}\Big(\frac{a_0 \cdots a_{\ell}}{a_j}\Big)x_j U^j (x^j_1, \ldots, x^j_{n_j}),          
\end{equation}
where  $\tilde U^0$ and $U^j$ are the potentials respectively given by \eqref{fibre-potential} and \eqref{FS}.
\end{lemma}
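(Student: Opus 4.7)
The plan is to verify the identity via the characterization from \cite{abreu0}: for a toric K\"ahler metric written in action-angle coordinates, one has ${\rm Hess}_{\tilde x}(\tilde U_{\bf a})^{-1}_{pq} = g_{\bf a}(K_p, K_q)$, where $K_p$ is the Killing vector field whose momentum is $\tilde x_p$. Setting $A_j := (a_0\cdots a_{\ell})/(r_j a_j)$, the vector field $X_j$ from \eqref{x-fields} has momentum $x_j$, and $\hat X^j_{k_j}$ from \eqref{killing-base} has momentum $\hat x^j_{k_j} = A_j \tilde x^j_{k_j}$ by \eqref{lift}. Hence, by linearity of the moment map, the Killing fields with momenta $\tilde x^j_{k_j}$ and $\tilde x^0_j$ are respectively $\tilde K^j_{k_j} := A_j^{-1}\hat X^j_{k_j}$ and $\tilde K^0_j := X_j - A_j^{-1}\sum_{k_j=1}^{n_j} \hat X^j_{k_j}$.

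Next I would compute the Gram matrix $g_{\bf a}(\tilde K_\cdot, \tilde K_\cdot)$ block by block using the expression \eqref{toric-bundle-2}. The key ingredients are: $\tilde\theta_i(X_j) = \delta_{ij}$ and $\tilde\theta_i(\hat X^j_{k_j}) = A_j x^j_{k_j}\delta_{ij}$ (since the horizontal lift $(X^j_{k_j})^H$ is annihilated by each $\tilde\theta_i$); $dx_i$ annihilates all $T^m$-generators; $\pi_* X_j = 0$ whereas $\pi_*\hat X^j_{k_j} = X^j_{k_j}$, the Killing field of $g_{\mathbb{C}P^{n_j}}$ with momentum $x^j_{k_j}$; and the Fubini--Study relation $g_{\mathbb{C}P^{n_j}}(X^j_{k_j}, X^j_{l_j}) = {\rm Hess}(U^j)^{-1}_{k_j l_j}$ from \eqref{FS}. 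This produces, in particular,
\begin{equation*}
g_{\bf a}(\tilde K^j_{k_j}, \tilde K^l_{k_l}) = \delta_{jl}\, A_j^{-1} x_j\, {\rm Hess}(U^j)^{-1}_{k_j k_l} + x^j_{k_j} x^l_{k_l}\, {\rm Hess}(\tilde U^0)^{-1}_{jl},
\end{equation*}
together with analogous expressions for the mixed blocks $g_{\bf a}(\tilde K^0_i, \tilde K^j_{k_j})$ and for $g_{\bf a}(\tilde K^0_i, \tilde K^0_j)$.

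Finally, I would verify that ${\rm Hess}_{\tilde x}(\tilde U_{\bf a})^{-1}$ coincides with this matrix. The Hessian of the proposed formula \eqref{s-potential} is computed by the chain rule with the non-linear substitution $x_j = \sum_{k\geq 0}\tilde x^j_{k}$ and $x^j_{k_j} = \tilde x^j_{k_j}/x_j$; this yields a \emph{tensorial} contribution of the form $(J^{-1})^{T} {\rm Hess}_y(U_{\bf a}) J^{-1}$ plus a \emph{connection correction} involving first derivatives of $U_{\bf a}$ paired with second derivatives of the change of variables. After assembly, inversion block-by-block matches the Gram matrix computed above. The main obstacle is the bookkeeping of this non-linear change of coordinates; it is kept manageable because \eqref{s-potential} decouples across the base factors indexed by $j$ (each summand $A_j x_j U^j$ involves only $x_j$ and the $x^j_{k_j}$'s), so mixing between different $j$'s enters only through the rank-$\ell$ term coming from $\tilde U^0$, which can be absorbed using the Sherman--Morrison structure already visible in the Gram matrix.
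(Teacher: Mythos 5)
Your proposal is correct, but it proves the lemma by a genuinely different route than the paper. You work entirely on the symplectic side: you identify the Killing fields $\tilde K^j_{k_j}$, $\tilde K^0_j$ whose momenta are the action coordinates, compute their Gram matrix from the bundle-adapted form \eqref{toric-bundle-2} of $g_{\bf a}$ (your displayed block is correct: the horizontal part contributes $\delta_{jl}A_j^{-1}x_j\,{\rm Hess}(U^j)^{-1}$ and the vertical part the rank-$\ell$ coupling $x^j_{k_j}x^l_{k_l}{\rm Hess}(\tilde U^0)^{-1}_{jl}$), and then match this against the inverse Hessian of \eqref{s-potential} after the nonlinear substitution $x^j_{k_j}=\hat x^j_{k_j}/(A_jx_j)$; I checked that the block-by-block inversion does close (e.g.\ the fibre--fibre block of ${\rm Hess}(U_{\bf a})\cdot{\bf H}$ reduces to the identity after the two cross terms involving $({\rm Hess}(U^j)x^j)_k\,{\rm Hess}(\tilde U^0)^{-1}_{jl}$ cancel). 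The paper instead argues on the Legendre-dual side: it exhibits the fibre-wise K\"ahler potential $H^0=\sum_j x_j\,\partial\tilde U^0/\partial x_j-\tilde U^0$, verifies $dd^cH^0=\omega_{\bf a}$ directly, identifies the pluriharmonic coordinates $y^j_{k_j}=\partial U^j/\partial x^j_{k_j}$ and $y_j=-A_jH^j+\partial\tilde U^0/\partial x_j$ dual to the Killing fields, and reads off $U_{\bf a}$ as the Legendre transform $\sum_p y_px_p-H^0$. The paper's route has the side benefit of producing the explicit pluriharmonic functions $y_j$, which are reused verbatim in the asymptotic analysis of Section~\ref{s:infinity}; your route avoids having to verify $dd^cH^0=\omega_{\bf a}$ and stays in one coordinate system, at the cost of heavier chain-rule bookkeeping. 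Two minor points to make explicit if you write this up: the Hessian criterion determines $U_{\bf a}$ only up to an affine function of the momenta, which is the standard gauge freedom for symplectic potentials and is all that is used later (the boundary conditions and the ``$+\,{\rm smooth}$'' expansions in Proposition~\ref{general} are insensitive to it); and you should record the identity $\tilde\theta_i\big((X^j_{k_j})^H\big)=0$, which follows from Remark~\ref{r:horizontal} together with \eqref{tilde-theta}, since it is what kills the horizontal--vertical cross terms in your Gram matrix.
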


\begin{proof}
Recall from the general theory \cite{guillemin} that $U_{\bf a}(x_1, \ldots, x_m)$ can be determined from an invariant  K\"ahler potential ${H}(x_1, \ldots, x_m)$  for \eqref{toric}, via  a Legendre transform. Specifically, if $H_{\bf a}$ is a function in $(x_1, \ldots, x_m)$ satisfying $dd^c H_{\bf a} = \omega_{\bf a}$, and $(dy_1 + i dt_1, \ldots, dy_m + i dt_m)$ are holomorphic $(1,0)$-forms which  form a dual basis to the  the $(1,0)$-parts of the Killing fields $\partial/\partial t_i$, then we can take
$$U_{\bf a} (x_1, \ldots, x_m)= \sum_{p=1}^m y_p x_p  - H(x_1, \ldots, x_m).$$
The functions $y_q$ are in turn given by $y_q = \frac{\partial U_{\bf a}}{\partial x_q}$.
In our situation, a K\"ahler potential for \eqref{order-ell} can be constructed as follows. Let
$$H^0(x_1, \ldots, x_{\ell}) := \sum_{j=1}^{\ell} x_j \frac{\partial \tilde U^0}{\partial x_j} - \tilde U^0(x_1, \ldots, x_{\ell})$$
be a fibre-wise K\"ahler potential. Then, by using \eqref{toric-bundle-2} to express the action of $J_{\bf a}$ on $dx_i$ and \eqref{tilde-theta-0}, we have
\begin{equation}\label{computation}
\begin{split}
dd^c H^0&= d\Big( J_{\bf a} \sum_{j,i=1}^{\ell} x_j \frac{\partial^2 \tilde U^0}{\partial x_i \partial x_j} dx_i\Big) \\
                       & = d\Big(\sum_{i,j,r=1}^{\ell} x_j({\rm Hess U^0})_{ij} ({\rm Hess}(U^0))^{-1}_{ir} \tilde \theta_r\Big)  \\
                       &= d(\sum_{j=1}^{\ell} x_j \tilde \theta_j) = \omega_{\bf a}.                     
\end{split}
\end{equation}
Thus, $U_{\bf a}$ is given by a Legendre transform of $H^0$ with respect to the full set of momentum coordinates  $(x_1, \ldots, x_{\ell}; \hat x^j_{k_j})$. The set of pluriharmonic functions $(y_1, \ldots, y_{\ell}; y^j_{k_j})$ corresponding to the Killing fields $(X_j, \hat X^j_{k_j})$ can be constructed as follows:
$y^j_k: = \frac{\partial U^j}{\partial x^j_k}$ is the pull-back to $\mathring{M}$ of the pluriharmonic coordinates on each ${\mathbb C}P^{n_j}$,  whereas 
$$y_j := - \frac{1}{r_j}\Big(\frac{a_0\cdots a_{\ell}}{a_j}\Big)H^j  + \frac{\partial \tilde U^0}{\partial x_j},$$
with 
\begin{equation}\label{base-potential}
H^j (x^j_1, \ldots, x^j_{n_j}):=  \sum_{k=1}^{n_j} x^j_k \frac{\partial U^j}{\partial x^j_k} - \tilde U^j(x^j_1, \ldots, x^j_{n_j})= -\frac{1}{2}r_j \log(1-\sum_{k=1}^{n_j} x^j_k)
\end{equation}
being a K\"ahler potential for the normalized Fubini--Study metric $g_{\mathbb{C}P^{n_j}}$. (Similar calculation as in \eqref{computation} shows that $y_j$ are pluriharmonic and that $(dy_j, dy^j_{k_j})$ form a dual basis for $(J_{\bf a} X_j, J_{\bf a} \hat X^j_{k_j})$.) It then follows that
\begin{equation*}
\begin{split}
U^0_{\bf a} = & -H^0 + \sum_{j=1}^{\ell} \Big(y_j x_j + \sum_{k_j=1}^{n_j} y^j_{k_j}\hat x^j_{k_j}\Big) \\
                   = & \Big(-H^0 + \sum_{j=1}^{\ell} x_j\frac{\partial \tilde U^0}{\partial x_j}\Big) \\
                      &+\sum_{j=1}^{\ell} \frac{x_j}{r_j}\Big(\frac{a_0\cdots a_{\ell}}{a_j}\Big)\Big(-H^j  +  \sum_{k_j=1}^{n_j} y^j_{k_j}x^j_{k_j}\Big)\\
                     =&  \tilde U^0(x_1, \ldots, x_{\ell})  + \sum_{j=1}^{\ell}  \frac{x_j}{r_j}\Big(\frac{a_0\cdots a_{\ell}}{a_j}\Big)U^j(x^j_1, \ldots, x^{j}_{n_j}).
\end{split}
\end{equation*}
\end{proof}
We are now ready prove our main technical result

\begin{prop} \label{general} Let $a_0, a_1< a_2 < \cdots < a_{\ell}$ be any positive integer numbers, $n_0:=\ell$  and $n_j, \ j=1, \ldots, \ell$  non-negative integers with $\sum_{j=0}^{\ell} n_j = m$. Consider the unbounded simplex in $\mathbb{R}^m$
$$\mathring{\rm P}_m=\Big\{(\tilde{x}_1^0, \ldots, \tilde{x}_{\ell}^0;  \tilde{x}^1_1, \ldots, \tilde{x}^1_{n_1}; \ldots ; \tilde{x}^{\ell}_1, \ldots, \tilde{x}^{\ell}_{n_{\ell}}) \ : \ \tilde{x}^j_{k_j} >0, \ \sum_{j=0}^{\ell}\Big(\sum_{k_j=1}^{n_j} \tilde{x}^j_{k_j} \Big)>1
\Big \}.$$
Then,  the restriction of the scalar-flat K\"ahler metric \eqref{order-ell} to $\mathring{M}_{\bf a}\cong \mathring{\rm P}_m\times T^m$ can be written in the form \eqref{toric} for a symplectic potential $\tilde U_{\bf a}(\tilde x_1, \ldots, \tilde x_m)$ given by Lemma~\ref{potential}, which satisfies the positivity and boundary conditions of \cite[Prop.~1]{ACGT} at the faces of $\mathring{\rm P}_m$,  with respect to the inward normals
$$v^j_{k_j} = \Big(\frac{a_0 \cdots a_{\ell}}{a_j}\Big)e^j_{k_j}, \ \  j=0, \ldots \ell, \  k_j=1, \ldots, n_j, \ v_0 = \Big(\frac{a_0 \cdots a_{\ell}}{a_0}\Big)\Big(\sum_{j=0}^{\ell} (\sum_{k_j=1}^{n_j} e^j_{k_j})\Big),$$
where $e^j_{k_j}$ denotes the standard basis  of ${\mathbb R}^m = \prod_{j=0}^{\ell} {\mathbb R}^{n_j+1}$.
\end{prop}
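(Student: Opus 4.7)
The plan is to exploit the explicit decomposition \eqref{s-potential} of $U_{\bf a}$ given by Lemma~\ref{potential}, verifying the Abreu--Guillemin positivity and boundary conditions at each facet of $\mathring{\rm P}_m$ separately. The crucial structural feature is that at each such facet only a \emph{single} summand of \eqref{s-potential} contributes singularly, which reduces the analysis to a collection of standard computations. The interior positivity of $\mathrm{Hess}(\tilde U_{\bf a})$ is automatic: the tensor field $(g_{\bf a},\omega_{\bf a})$ of \eqref{order-ell} was already shown in Section~\ref{s:local} to be a bona fide K\"ahler metric on $M^0_{\bf a}$ (via $(-1)^j F_j>0$ on $(\alpha_j,\alpha_{j+1})$ and $F_\ell>0$ on $(0,\infty)$), and the symplectic form \eqref{toric} merely records this positivity in moment coordinates. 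The actual content of the proposition is thus the boundary analysis.

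The first step would be to enumerate the facets of $\mathring{\rm P}_m$ and trace them through \eqref{x-tilde}. Three types appear: $\{\tilde x^j_{k_j}=0\}$ with $j\ge 1$, $k_j\ge 1$, corresponding (since $x_j>0$ nearby) to the interior facets $\{x^j_{k_j}=0\}$ of the base simplex $\mathring{\rm S}_{n_j}$; $\{\tilde x^0_j=0\}$, corresponding via $\tilde x^0_j=x_j(1-\sum_{k_j} x^j_{k_j})$ to the outer facets $\{\sum_{k_j} x^j_{k_j}=1\}$ of $\mathring{\rm S}_{n_j}$; and the fibre-type facet $\{\sum \tilde x^j_{k_j}=1\}$, corresponding to $\{\sum_j x_j=1\}$, the outer facet of $\mathring{\rm P}_\ell$. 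Note that the inner facets $\{x_j=0\}$ of $\mathring{\rm P}_\ell$ collapse to higher-codimension faces of $\mathring{\rm P}_m$ under \eqref{x-tilde}; this is the toric manifestation of the orbifold blow-down described in Remark~\ref{r:blow-down}, so no boundary conditions need to be imposed there.

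On each facet, I would isolate the singular summand of \eqref{s-potential} and invoke the relevant standard result. On facets coming from $\mathring{\rm S}_{n_j}$, only $\frac{1}{r_j}\bigl(\frac{a_0\cdots a_\ell}{a_j}\bigr) x_j U^j$ is singular and has the form of the Fubini--Study Guillemin potential rescaled by the smooth positive factor $\frac{x_j}{r_j}\bigl(\frac{a_0\cdots a_\ell}{a_j}\bigr)$; rewriting the singular term $\frac{1}{2}\bigl(\frac{a_0\cdots a_\ell}{a_j}\bigr) x_j x^j_{k_j} \log x^j_{k_j}$ in the variable $\tilde x^j_{k_j}=x_j x^j_{k_j}$ produces precisely the canonical Guillemin singularity with weighted normal $\bigl(\frac{a_0\cdots a_\ell}{a_j}\bigr) e^j_{k_j}$, and an analogous computation handles the outer facets $\{\tilde x^0_j=0\}$. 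On the fibre-type facet, the Fubini--Study summands are smooth and the singular part of $U_{\bf a}$ is entirely $\tilde U^0$, for which Proposition~\ref{p:local} already supplies the required conditions with respect to $v_0$ of \eqref{normal-v-0}. The main obstacle is not conceptual but computational: tracking the constant factors through \eqref{x-tilde}, the rescalings in \eqref{rj1}--\eqref{determination}, and the Vandermonde identities of \eqref{vandermonde}, so that the normals produced agree exactly with those of the statement.
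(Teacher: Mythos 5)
Your overall strategy (feed the decomposition \eqref{s-potential} of Lemma~\ref{potential} into the Abreu-type boundary criterion and match the canonical $L\log L$ singularities facet by facet) is the same as the paper's, and your facet bookkeeping under \eqref{x-tilde} is correct. But there are two genuine gaps. First, you dismiss the collapsed facets $\{x_j=0\}$ of $\mathring{\rm P}_\ell$ with ``no boundary conditions need to be imposed there.'' The conditions of \cite[Prop.~1]{ACGT} (equivalently \cite[Thm.~2]{abreu}) require ${\bf H}$, equivalently $U_{\bf a}-U_{\mathrm{can}}$, to be smooth on the \emph{whole closed} polytope ${\rm P}_m$, including the codimension-$(n_j+1)$ faces into which $\{x_j=0\}$ collapses. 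Near such a face $x_j\to 0$, and the individual summands of \eqref{s-potential} are singular in a non-canonical way: rewriting $x_jU^j$ in the $\tilde x$-variables produces $-\tfrac12\big(\tfrac{a_0\cdots a_\ell}{a_j}\big)\big(\sum_{k_j}\tilde x^j_{k_j}+\tilde x^0_j\big)\log x_j=-\tfrac12\big(\tfrac{a_0\cdots a_\ell}{a_j}\big)x_j\log x_j$, and $x_j=\sum_{k_j}\tilde x^j_{k_j}+\tilde x^0_j$ is \emph{not} a facet-defining function of $\mathring{\rm P}_m$. This term must be shown to cancel against the $+\tfrac12\big(\tfrac{a_0\cdots a_\ell}{a_j}\big)x_j\log x_j$ coming from the boundary expansion of $\tilde U^0$; that cancellation is exactly the toric content of the blow-down and is the central computation in the paper's proof. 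Your facet-interior analysis (where $x_j>0$) never sees it.

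Second, the ``positivity'' in \cite[Prop.~1]{ACGT} is not only positivity of ${\rm Hess}(\tilde U_{\bf a})$ on the interior (which, as you say, is automatic from $g_{\bf a}$ being a metric); it also requires ${\bf H}$ to be positive definite on the interior of every boundary face, equivalently that ${\det}({\rm Hess}(\tilde U_{\bf a}))^{-1}$ equals the product of the facet-defining affine functions times a function that is smooth and \emph{strictly positive} up to the boundary. This does not follow from interior convexity together with the first-order facet conditions, and it is not addressed in your proposal. The paper handles it by a separate geometric computation: it identifies $\det({\bf H})$ with the squared norm of the wedge of the Killing fields $X_j,\hat X^j_{k_j}$, uses the orthogonality of $X_i$, $(X^j_k)^H$ and the scaling $g_{\bf a}((X^j_k)^H,(X^j_p)^H)=C_jx_jg_{\mathbb{C}P^{n_j}}(X^j_k,X^j_p)$ to factor the determinant, and arrives at the formula \eqref{det}, after which \cite[Thm.~2]{abreu} and \cite[Rem.~4(ii)]{ACGT} apply. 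You would need either this computation or an inductive argument that the restriction of the ansatz to each face is again a nondegenerate metric of the same type; as written, the proposal establishes only the singular expansion of the potential at facet interiors, which is roughly half of what the proposition asserts.
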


\begin{proof}
Equivalent to \eqref{toric-boundary} is the criterion \cite[Thm.~2]{abreu} (established in \cite[App.~A]{abreu0}, see also \cite[Lemma 2]{ACGT}) expressed in terms of the asymptotics of $\tilde U_{\bf a}$ and  ${\rm det}({\rm Hess}(\tilde U_{\bf a}))^{-1}$ near the boundary of ${\rm P}_{m}$, which we shall now elaborate on. (This has  also the advantage to deal with the boundary and positivity conditions of \cite{ACGT} at once.)

First of all, as $\tilde U^0$ satisfies \eqref{toric-boundary} with the normals $v_j$,  it follows from \cite[Thm.~2]{abreu} that near the boundary of ${\rm P}_{\ell}$, 
\begin{equation*}
\begin{split}
\tilde U^0 = &\frac{1}{2}\Big[\sum_{j=1}^{\ell} \Big(\frac{a_0\ldots a_{\ell}}{a_j} \Big) x_j \log x_j  + \Big(\frac{a_0\ldots a_{\ell}}{a_0}\Big)(\sum_{j=1}^{\ell} x_j-1)\log(\sum_{j=1}^{\ell} x_j-1)\Big]\\
                   & + \ {\rm smooth},
                   \end{split}
                    \end{equation*}
so that
\begin{equation*}
\begin{split}
 U_{\bf a}= &\frac{1}{2}\Big[\sum_{j=1}^{\ell} \Big(\frac{a_0\cdots a_{\ell}}{a_j}\Big) x_j \Big(\big(\sum_{k_j=1}^{n_j} x_{k_j}^j \log x_{k_j}^j\big)   + (1-\sum_{k_j=1}^{n_j} x^j_{k_j}) \log (1-\sum_{k_j=1}^{n_j }x^j_{k_j})\Big)\\
                  &+ \sum_{j=1}^{\ell} \Big(\frac{a_0\ldots a_{\ell}}{a_j} \Big)x_j \log x_j  + \Big(\frac{a_0\ldots a_{\ell}}{a_0}\Big) (\sum_{i=1}^{\ell} x_{i}-1)\log(\sum_{i=1}^{\ell} x_i-1)\Big] + \ {\rm smooth}\\
                 =& \frac{1}{2}\Big[\sum_{j=1}^{\ell}\Big(\frac{a_0\cdots a_{\ell}}{a_j}\Big) \Big(\sum_{k_j=1}^{n_j} \tilde x^j_{k_j} (\log \tilde x^j_{k_j} - \log x_j) + \tilde x^0_j (\log \tilde x^0_j - \log x_j)\big)\\
                 & +  \sum_{j=1}^{\ell} \Big(\frac{a_0\ldots a_{\ell}}{a_j} \Big)x_j \log x_j  + \Big(\frac{a_0\ldots a_{\ell}}{a_0}\Big) \Big(\sum_{j=0}^{\ell}\sum_{k_j=1}^{n_j}\tilde x^j_{k_j}-1\Big)\log\Big(\sum_{j=0}^{\ell}\sum_{k=1}^{n_j}\tilde x^j_{k_j}-1\Big)\Big] \\ &+ \ {\rm smooth}\\
                =& \frac{1}{2}\Big[\sum_{j=1}^{\ell}\Big(\frac{a_0\cdots a_{\ell}}{a_j}\Big) \Big(\sum_{k_j=1}^{n_j} \tilde x^j_{k_j} \log \tilde x^j_{k_j}\Big) \\
                  &  + \Big(\frac{a_0\ldots a_{\ell}}{a_0}\Big) \Big(\sum_{j=0}^{\ell}\sum_{k_j=1}^{n_j}\tilde x^j_{k_j}-1\Big)\log\Big(\sum_{j=0}^{\ell}\sum_{k_j=1}^{n_j}\tilde x^j_{k_j}-1\Big)\Big] + \ {\rm smooth},
\end{split}
\end{equation*}                  
where, we recall,  we have set $n_0:=\ell$ in  the summation in the last term.

Formula \eqref{toric} shows that the determinant ${\rm det}({\rm Hess}(\tilde U_{\bf a}))^{-1})$ is,  up to a positive constant,   the norm with respect to $g_{\bf a}$ of the wedge product
$$\Big(\bigwedge_{j=1}^{\ell} X_j\Big) \wedge \Big(\bigwedge_{j=1}^{\ell}\Big(\bigwedge_{k_j=1}^{n_j} \hat X^j_{k_j}\Big)\Big)$$
of the Killing vector fields $(X_1, \cdots, X_{\ell}, \hat X^j_{k_j})$.  By  \eqref{x-fields}, $X_1\wedge \cdots \wedge X_{\ell}$ is a multiple of  $K_1\wedge \cdots \wedge K_{\ell}$, so that  using   \eqref{killing-base}, we have
$$\Big(\bigwedge_{j=1}^{\ell} X_j\Big) \wedge \Big(\bigwedge_{j=1}^{\ell}\Big(\bigwedge_{k_j=1}^{n_j} \hat X^j_{k_j}\Big)\Big)  =  \Big(\bigwedge_{j=1}^{\ell} X_j\Big) \wedge \Big(\bigwedge_{j=1}^{\ell}\Big(\bigwedge_{k_j=1}^{n_j}  (X^j_{k_j})^H\Big)\Big).$$
Using the form   \eqref{toric-bundle-2} of the metric, one sees that with respect to $g_{\bf a}$
\begin{enumerate}
\item[$\bullet$]  $X_i$ is orthogonal to $(X^j_{k})^H$,  and $(X^j_{k})^H$ is orthogonal with $(X^r_{p})^H$ if $j \neq r$. 
\item[$\bullet$] $g_{\bf a}((X^j_{k})^H, (X^j_{p})^H)= C_j x_j g_{\mathbb{C}P^{n_j}}(X^j_{k}, X^j_{p})$,
\end{enumerate}
where $C_j$ is a positive constant. It then follows that $${\rm det}({\rm Hess}(U_{\bf a}))^{-1}) = C \Big(\prod_{j=1}^{\ell} x_j^{n_j}\Big)( {\rm det} {\bf H}^0) ({\rm det}{\bf H}^j),$$
where ${\bf H}^j = {\rm Hess}(U^j)^{-1}$ and $C$ is a positive constant. Using \cite[Thm.~2]{abreu} (see also \cite[Rem.~4(ii)]{ACGT}), we get
\begin{equation}\label{det}
\begin{split}
{\rm det}({\rm Hess}(U_{\bf a}))^{-1}) &= \delta (\sum_{j=1}^{\ell} x_j -1) \prod_{j=1}^{\ell}x_j^{n_j+1}\Big((1-\sum_{k_j=1}^{n_j} x^j_{k_j})\prod_{k_j=1}^{n_j}x^j_{k_j}\Big) \\
                                                           &= \delta(\sum_{j=0}^{\ell}\sum_{k_j=1}^{n_j} \tilde x^j_{k_j} -1) \prod_{j=0}^{\ell}\prod_{k_j=1}^{n_j} \tilde x^j_{k_j},
 \end{split}
 \end{equation}                                                          
where $\delta$ is a  smooth positive function extended to the closure ${\rm P}_m$ of  $\mathring{\rm P}_m$; by \cite[Thm.~2]{abreu} and \cite[Rem.~4(ii)]{ACGT} we conclude the proof. \end{proof}
\begin{cor}\label{c:compactification}
The K\"ahler metric \eqref{order-ell} gives rise to  a scalar-flat K\"ahler metric  $(g_{\bf a}, J_{\bf a}, \omega_{\bf a})$ on the non-compact weighted projective space  $M_{\bf a}=\mathbb{C}P^m_{-a_0, a_1, \ldots, a_1, \ldots, a_{\ell}, \ldots, a_{\ell}}$, where each positive integer $a_j$ is counted $(1+n_j)$-times. 
\end{cor}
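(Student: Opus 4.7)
The plan is to combine Proposition~\ref{general} with the Delzant-type correspondence for toric (orbi)symplectic manifolds due to Lerman--Tolman and Abreu, in the same spirit as Lemma~\ref{l:wps-symplectic}. By Proposition~\ref{general}, on the open dense subset $\mathring{M}_{\bf a} \cong \mathring{\rm P}_m \times T^m$ of $M^0_{\bf a}$ the metric $(g_{\bf a}, \omega_{\bf a})$ is expressed in toric form \eqref{toric} in terms of a symplectic potential $\tilde U_{\bf a}$ that satisfies the boundary/positivity conditions of \cite[Prop.~1]{ACGT} at every face of $\mathring{\rm P}_m$, with respect to the weighted inward normals $v^j_{k_j}$ and $v_0$ listed in the Proposition. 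Since scalar-flatness of $(g_{\bf a}, \omega_{\bf a})$ on $\mathring{M}_{\bf a}$ was already built into the ansatz \eqref{order-ell} via the results of \cite{ACG}, everything left to do is to identify the compact toric orbifold attached to $(\mathring{\rm P}_m, \{v^j_{k_j}, v_0\})$ and to invoke the extension theorem of \cite{abreu0, ACGT}.

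First, I would apply the higher-dimensional analogue of Lemma~\ref{l:wps-symplectic} to the unbounded simplex $\mathring{\rm P}_m \subset \mathbb{R}^m$ equipped with the lattice $\Lambda$ generated by $\{v^j_{k_j}, v_0\}$. Reindexing the standard basis $e^j_{k_j}$ of $\mathbb{R}^m = \prod_{j=0}^\ell \mathbb{R}^{n_j+1}$ (with $n_0 = \ell$) exhibits these normals as the weighted inward normals to the standard unbounded $m$-simplex associated to the weight vector ${\bf a}^\sharp = (a_0, a_1, \ldots, a_1, \ldots, a_\ell, \ldots, a_\ell)$ in which $a_j$ is repeated $(n_j + 1)$ times (i.e.\ the $v^j_{k_j}$ correspond to the compact facets coming from the entries $a_j$, while $v_0$ corresponds to the unique non-compact facet coming from the entry $-a_0$). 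The Delzant/GIT construction in Lemma~\ref{l:wps-symplectic} then identifies the resulting toric symplectic orbifold with precisely $\CP^m_{-a_0, a_1, \ldots, a_1, \ldots, a_\ell, \ldots, a_\ell} = M_{\bf a}$.

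Next, the combined positivity and boundary conditions verified in Proposition~\ref{general} are exactly the criteria of \cite[Prop.~1]{ACGT} (equivalently of \cite[Thm.~2]{abreu}) guaranteeing that $\tilde U_{\bf a}$ is the symplectic potential of a genuine $T^m$-invariant Kähler orbi-metric on the whole toric orbifold $M_{\bf a}$ whose restriction to the dense open subset $\mathring{M}_{\bf a}$ coincides with $(g_{\bf a}, \omega_{\bf a})$ as given by \eqref{order-ell}. Scalar-flatness propagates from $\mathring{M}_{\bf a}$ to $M_{\bf a}$ by continuity of the scalar curvature, which finishes the proof.

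The only real point to check carefully is the combinatorial/lattice identification in the first step: one must verify that the lattice generated by the particular normals $v^j_{k_j}, v_0$ (and not some finite-index sublattice) precisely recovers the non-compact weighted projective space with the claimed weights, so that no extra orbifold quotient is introduced. As observed in Remark~\ref{r:blow-down}, this is a subtle issue, but it follows from the same calculation as in Lemma~\ref{l:wps-symplectic}, tracking the normalization factors $a_0\cdots a_\ell / a_j$. Once this identification is in place, the extension of the metric and the scalar-flatness are formal consequences of the cited toric machinery.
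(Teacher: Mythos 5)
Your proposal follows essentially the same route as the paper: read off the labelled polytope $(\mathring{\rm P}_m,\{v^j_{k_j},v_0\})$ from Proposition~\ref{general}, match it with the Delzant data of $\CP^m_{-a_0,a_1,\ldots,a_1,\ldots,a_\ell,\ldots,a_\ell}$ via (the $m$-dimensional version of) Lemma~\ref{l:wps-symplectic}, and invoke the extension criterion of \cite{abreu,ACGT}; scalar-flatness is automatic from the ansatz. The one point where you are slightly too quick is the claim that the normals of Proposition~\ref{general} \emph{are} the weighted normals attached to the weight vector with repeated entries: the coefficient there is $\tfrac{a_0\cdots a_\ell}{a_j}$, whereas Lemma~\ref{l:wps-symplectic} applied to the $(m+1)$-tuple produces $\tfrac{a_0\,a_1^{n_1+1}\cdots a_\ell^{n_\ell+1}}{a_j}$, so all normals are off by the \emph{uniform} factor $\prod_{j=1}^{\ell}a_j^{n_j}$. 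The paper resolves exactly this discrepancy by an explicit homothety of the metric (the rescaling by $\lambda_{\bf a}$, equivalently the affine map $\tfrac{1}{\lambda_{\bf a}}\mathrm{Id}$ of $\mathbb{R}^m$), which changes the K\"ahler structure only by a constant scale and hence does not affect the conclusion; without that rescaling the labelled polytope of Proposition~\ref{general} describes an orbifold quotient of $M_{\bf a}$ by a finite cyclic group, so your ``lattice check'' should be stated as ``the normals agree up to a uniform positive factor, which is absorbed by a homothety'' rather than as an exact coincidence.
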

\begin{proof} Rescalling the metric \eqref{order-ell} by a factor $\lambda_{\bf a} = \frac{1}{a_0^{n_0} \ldots a_{\ell}^{n_{\ell}}}$ leads to a homothety by factor $\lambda_{\bf a}$ in momenta, i.e. the K\"ahler metric is defined over 
$\lambda_{\bf a}(\mathring{\rm P}_m) \times T^{m}$ and its symplectic potential satisfies the boundary conditions  of \cite[Prop.~1]{ACGT}  with respect to the inward normals $(v_0, v_{k_j}^j)$ of Proposition~\ref{general}. Using the affine transformation  $\frac{1}{\lambda_{\bf a}}{\rm Id}$ of ${\mathbb R}^m$, the labeled polytope $\lambda_{\bf a}(\mathring{\rm P}_m)$ transforms to $\mathring{\rm P}_m$ equipped with inward labelled normals  
$$\tilde v^j_k= \Big(\frac{a_0^{n_0}\cdots a_{\ell}^{n_{\ell}}}{a_j}\Big) e^j_k,$$
which are  consistent with Lemma~\ref{l:wps-symplectic}. By the extension criterion of  \cite{abreu,ACGT}, \eqref{order-ell} thus gives rise to an orbifold K\"ahler metric on 
$M_{\bf a}=\mathbb{C}P^m_{-a_0, a_1, \ldots, a_1, \ldots, a_{\ell}, \ldots, a_{\ell}}$.

Geometrically,  the normals appearing in Proposition~\ref{general} give rise to an orbifold K\"ahler metric defined on the orbifold quotient of $M_{\bf a}$ by the cyclic group of order $\frac{1}{\lambda_{\bf a}}$. Properly rescalled,  this 
K\"ahler metric pulls back to  define an orbifold K\"ahler metric on $M_{\bf a}$,   compatible with symplectic form described in Lemma~\ref{l:wps-symplectic}. \end{proof}

\begin{rem} \label{blow-down}{\rm  (1) We note that the arguments we used to derive Proposition~\ref{general} do not require that the positive reals $a_0, a_1, \ldots, a_{\ell}$ parametrizing the construction are all integers. In fact,  we have used the boundary conditions \eqref{toric-boundary} of \cite[Prop.~1]{ACGT},  which can be formulated with respect to any inward normals $v_j$  to $\mathring{\rm P}_{m}$, belonging to a lattice or not. Thus, Proposition~\ref{general} provides solutions of the scalar-flat Abreu equation on $\mathring{\rm P}_m$,  satisfying the boundary conditions corresponding to the weighted inward normals 
$$\Big\{\Big(\frac{b_0\cdots b_{m}}{b_j}\Big)e_j, j=1, \ldots, m;  \ \Big(\frac{b_0\cdots b_{m}}{b_0}\Big)(e_1 + \cdots + e_m)\Big\}, $$
for any positive real numbers $b_0, \ldots, b_{m}.$   Such solutions might be useful when applying the continuity method as in \cite{Do}, or when considering Sasaki analogues of this construction, see e.g. \cite{Eveline2}. Such metrics can also be thought of as examples of scalar-flat K\"ahler metrics with {\it edge} singularities along complex hyper-surfaces,    in the sense of \cite{Do1,Ma} (see also \cite{Rub} for an overview),   defined on the total space of the tautological vector bundle $\mathcal{O}(-1) \to \mathbb{C}P^{m-1}$ viewed as a toric manifold with  Delzant polytope ${\rm P}_m$ (e.g. via Lemma~\ref{l:wps-symplectic} with all $a_i=1$).

(2) As we mentioned along the way, one can let all $n_j=0$, in which case the metric \eqref{order-ell} reduces to its vertical part and gives rise to an  {\it orthotoric} scalar-flat K\"ahler metric on $\mathbb{C}P^{\ell}_{-a_0, a_1, \ldots, a_{\ell}}$.

(3)  Up to a homothety, the symplectic form $\omega_{\bf a}$ coincides with the symplectic form $\omega^0$  on $M_{\bf a}$ obtained by the Delzant construction (see Lemma~\ref{l:wps-symplectic}) whereas the complex structures $J_{\bf a}$ and $J^0$ are pointwise different.  However, there is a standard way to identify equivariantly two $T^{m}$-invariant  and $\omega^0$-compatible complex structures $J_{\bf a}$ and $J^0$ on any toric manifold (see e.g. \cite{abreu}): to this end, one can use the potential function $\tilde U_{\bf a}$ on $\mathring{\rm P}_m$ for the scalar-flat K\"ahler metric  $(g_{\bf a}, J_{\bf a})$, as defined in \eqref{toric}, and let $\tilde y_i:= \partial \tilde U_{\bf a}/\partial \tilde x_i$ as in the proof of Lemma~\ref{potential}; similarly, the quotient metric $g^0$ obtained via the Delzant quotient construction can be written in the form \eqref{toric} with respect to a potential $\tilde U^0_{\bf a}$, and we put $\tilde y_i^0:= \partial \tilde U^0_{\bf a}/\partial \tilde x_i$. On thus obtains on $\mathring{M}_{\bf a} \subset M_{\bf a}$ holomorphic coordinate systems $(\tilde y_i + i \tilde t_i)$  and $(\tilde y_i^0 + i\tilde t_i)$ for $J_{\bf a}$ and $J^0$, respectively,  and thus  a $T^m$-equivariant diffeomorphism $\Phi_0: (\tilde y_i  + i \tilde t_i) \to (\tilde y^0_i + i \tilde t_i)$ which sends $J_{\bf a}$ to $J^0$ (on $\mathring{M}$) but which does not preserve $\omega_{\bf a}$. It then can be shown that  $\Phi_0$ extends (equivariantly) to a biholomorphism from $(M_{\bf a}, J_{\bf a})$ to $(M_{\bf a}, J^0) = {\mathbb C}P^m_{-a_0,a_1,\ldots, a_m}$.
}

 \hfill $\Box$\end{rem}

\subsection{Behaviour of the metric at infinity}\label{s:infinity}
In order to complete the proof of Theorem~\ref{thm:main}, we have to show that $(g_{\bf a}, J_{\bf a}, \omega_{\bf a})$ is an ALE metric, i.e. that $(M_{\bf a}, J_{\bf a})$ is biholomorphic to $\mathbb{C}^m/\Gamma_{\bf a}$ outside a compact subset, and estimate the K\"ahler potential of $\omega_{\bf a}$ in terms of the potential $\frac{1}{4}||z||^2$ of the  standard  flat metric on $\mathbb{C}^m$. 
\begin{prop}\label{p:asymptotics} The scalar-flat K\"ahler metric $g_{\bf a}$ on $\mathbb{C}P^m_{-a_0, a_1, \ldots, a_m}$ admits a chart at infinity,  which is biholomorphic to  the complement of a compact subset of $\mathbb C^m_{\bf a} = \mathbb{C}^m/\Gamma_{\bf a}$, and  such that  a K\"ahler potential $H$ for the K\"ahler form $\omega_{\bf a}$  is written as $$H =  \frac{1}{4}||z||^2 + A ||z||^{4-2m} + O(||z||^{3-2m})$$
when $m\ge 3$ and
$$H =  \frac{1}{4}||z||^2 + A\log ||z|| + O(||z||^{-2})$$
when $m=2$,  where $||z||^2$ is the square norm function on ${\mathbb C}^m$, also viewed as $4$ times the K\"ahler potential of the standard flat metric on $\mathbb{C}^m$. Furthermore,  the real constant $A=0$ iff $a_0 = \sum_{j=1}^m a_j$, i.e. the metric $g_{\bf a}$ is Ricci-flat.
\end{prop}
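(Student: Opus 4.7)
The plan is to generalize the argument from Section~\ref{s:calabi-type} to the multi-variable hamiltonian 2-form setting. The crucial simplification is that in the polytope $\mathring{\rm P}_m$, infinity is a one-parameter direction: the variables $\xi_1,\dots,\xi_{\ell-1}$ range over the compact intervals $[\alpha_j,\alpha_{j+1}]$, and only $\xi_\ell$ can go to $+\infty$. So the asymptotic analysis is morally a perturbation of the one-variable analysis of Section~\ref{s:calabi-type}, with the product factor $\prod_{j<\ell}\mathbb{C}P^{n_j}$ contributing only to the bounded ``smooth'' part of the potential.

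First, I would construct the blow-down map $q:M_{\bf a}\to \mathbb{C}^m/\Gamma_{\bf a}$ explicitly. Using Corollary~\ref{c:compactification} to identify $M_{\bf a}$ with $\mathbb{C}P^m_{-a_0,a_1,\dots,a_m}$ (weights repeated with multiplicities), the map is the natural one sending $[z_0:z_1:\cdots:z_m]$ to $(z_0^{a_1/a_0}z_1,\dots,z_0^{a_m/a_0}z_m)$, well defined up to the $a_0$-th root ambiguity in $z_0^{1/a_0}$, which produces precisely the cyclic group $\Gamma_{\bf a}\subset \U(m)$ described in Theorem~\ref{thm:main}. Alternatively, $\Gamma_{\bf a}$ arises lattice-theoretically as $\Lambda_0/\Lambda_{\bf a}$, via the orbifold quotient $V_{[{\bf a}]}=V_{\bf a}/G_{\bf a}$ of Lemma~\ref{l:wps-symplectic}. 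The map $q$ is a biholomorphism outside the divisor $\{z_0=0\}$, which in the polytope picture corresponds to the compact face $\{\sum x_j = 1\}$ of $\mathring{\rm P}_\ell$ at the ``bottom'' of $M_{\bf a}$.

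Next I would introduce a flat comparison metric $g_0$ on $M^0_{\bf a}$ by setting $F_\ell(x)=p(x)$ in \eqref{polynomials}; by the remark after \eqref{conditions} this yields a flat K\"ahler metric, which under $q$ is identified with (a constant multiple of) the standard Euclidean metric on $\mathbb{C}^m/\Gamma_{\bf a}$. Using the Legendre transform framework of Lemma~\ref{potential}, I would express K\"ahler potentials $H_{\bf a}$ and $H_0$ for $\omega_{\bf a}$ and $\omega_0$. A key structural observation is that $F_1=\cdots=F_{\ell-1}=p$ in both cases, so all $j<\ell$ terms in the fibre-wise potential \eqref{fibre-potential} are identical. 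The only non-trivial contribution to $H_{\bf a}-H_0$ comes from the $j=\ell$ summand, and can be written
\begin{equation*}
H_{\bf a}-H_0 \;=\; \int^{\xi_\ell} \frac{\prod_{k=1}^{\ell}(t-\xi_k)\bigl(F_\ell(t)-p(t)\bigr)}{\Theta(t)\, F_\ell(t)}\,dt \;+\; \mathrm{(bounded\ terms)}.
\end{equation*}
Since $F_\ell''=p''$, the difference $F_\ell(t)-p(t)=c_1 t+c_0$ is affine. The denominator $\Theta(t)F_\ell(t)$ is $\sim 4t^{\ell+m}$ and the numerator $\sim c_1 t^{\ell+1}+c_0 t^\ell$, so the integral gives a leading term of order $c_1\,\xi_\ell^{2-m}$ for $m\geq 3$ and $c_1\log\xi_\ell$ for $m=2$, with $c_0$ contributing at the strictly faster-decaying order $\xi_\ell^{1-m}$.

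Finally, mirroring the coordinate-change computation \eqref{varphi}--\eqref{varphi-sharp} of Section~\ref{s:calabi-type}, $\xi_\ell$ is related to $\|z\|^2$ through a relation of the form $\xi_\ell = \tfrac{1}{C}\|z\|^2+O(1)$ on the chart at infinity, where $C>0$ is determined by the bounded parameters $\xi_1,\dots,\xi_{\ell-1}$. This translates $\xi_\ell^{2-m}$ into $\|z\|^{4-2m}$ and $\log\xi_\ell$ into $\log\|z\|$, giving the stated asymptotic expansion for $H$. The constant $A$ is proportional to $c_1$, hence vanishes iff $F_\ell'(x)=p'(x)$, which by the discussion after \eqref{conditions} is the Ricci-flat condition, and by \eqref{einstein}, \eqref{determination}, \eqref{b0}, \eqref{ricci-weight} is equivalent to $a_0=\sum_{j=1}^{\ell}(n_j+1)a_j=\sum_{i=1}^{m}a_i$.

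The main obstacle will be the bookkeeping in the transition from the polytope variables $(\xi_1,\dots,\xi_\ell)$ to an honest biholomorphic chart at infinity in $\mathbb{C}^m/\Gamma_{\bf a}$: unlike in the Calabi case where the chart $q$ is explicit via the $\mathcal{O}(-r)$ description, here we must combine the complex-geometric map $q$ with the affine transformations $(x_j,x^j_{k_j})\leftrightarrow (\tilde x^j_{k_j})$ of \eqref{x-tilde} and the Legendre transform, and verify that all auxiliary changes of coordinate only contribute to the $O(\|z\|^{3-2m})$ (resp. $O(\|z\|^{-2})$) remainder. The fact that, for $j<\ell$, the fibre potential matches the flat model exactly keeps this tractable, so that the non-trivial asymptotic is genuinely one-dimensional in $\xi_\ell$ and can be extracted by the same integral estimates as in Section~\ref{s:calabi-type}.
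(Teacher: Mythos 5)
Your overall strategy is the one the paper itself follows: compare $g_{\bf a}$ with the flat model obtained by setting $F_\ell=p$, observe that the discrepancy is confined to the $j=\ell$ summand and is governed by the affine polynomial $F_\ell-p=ax+b$ (since $F_\ell''=p''$), identify $\Gamma_{\bf a}$ lattice-theoretically as $\Lambda_{\bf a}/\Lambda^0_{\bf a}$ to produce the chart at infinity, and convert $\xi_\ell$-asymptotics into $||z||^2$-asymptotics via $\xi_\ell=\tfrac12||z||^2+O(1)$. The order counting for the integral term is also correct.

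There is, however, a genuine gap at the step you defer to ``bookkeeping''. You assert that the changes of coordinates identifying the two complex structures contribute only to the $O(||z||^{3-2m})$ remainder. This is false: the $T^m$-equivariant biholomorphism $\Phi$ between $(M_{\bf a},J_{\bf a})$ and the flat model, obtained by matching pluriharmonic coordinates as in \eqref{pluriharmonic}--\eqref{hol-normal}, distorts the potential at the \emph{same} leading order as the integral term. Concretely, the paper's estimate \eqref{key2} gives $\sum_j(\varphi_j(\xi)-\xi_j)=-\tfrac{a}{2(m-1)}\xi_\ell^{2-m}+O(\xi_\ell^{1-m})$, which enters the potential difference through $H^f(\Phi(\xi))-H^f(\xi)=\tfrac12\sum_j(\varphi_j(\xi)-\xi_j)$; the resulting coefficient of $\xi_\ell^{2-m}$ is $\tfrac{a}{4(m-1)(m-2)}$, not the $\tfrac{a}{4(m-2)}$ one would get from the integral alone. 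The same phenomenon is already visible in the warm-up Section~\ref{s:calabi-type}, where the correction $\varphi(\xi)-\xi$ of \eqref{varphi-sharp} contributes to the leading term of \eqref{asymptotic}. Establishing \eqref{key2} --- via the identity \eqref{main-identity} obtained by exponentiating the matched coordinates $\tilde y_k$ --- is the bulk of the paper's proof and cannot be waved away; without it neither the value of $A$ nor the claimed error bound $O(||z||^{3-2m})$ is justified. A secondary slip: the integral you write for $H_{\bf a}-H_0$ is the difference of the \emph{symplectic} potentials $U^0$ of \eqref{fibre-potential}, not of the K\"ahler potentials; the latter (before composing with $\Phi$) is $-\tfrac12\int_\lambda^{\xi_\ell}\tfrac{ax+b}{F_\ell(x)}\,dx$ as in \eqref{non-flat-potential}, and the two differ (they even have opposite signs at leading order). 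Your qualitative conclusion that $A$ is proportional to $a$, hence vanishes iff $F_\ell'=p'$ iff the metric is Ricci-flat, does survive because every contribution is proportional to $a$; but the expansion as you state it is not proved.
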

\begin{proof} The proof  will be done in a similar way as we proceeded in Section~\ref{s:calabi-type}, namely by comparing the metric $g_{\bf a}$ given by \eqref{order-ell} to the flat K\"ahler metric obtained by the same ansatz.

Given a choice of ${\bf a}=(a_0,\ldots, a_{\ell})$ where $a_i$ are positive integers with $a_1<\ldots < a_{\ell}$, we have already observed in Sect.~\ref{s:local} that  letting $F_1(x)= \cdots = F_{\ell}(x)=p(x)$ in  \eqref{order-ell} and taking $\xi_{\ell} \in (\alpha_{\ell}, \infty)$, one gets  a flat K\"ahler metric $(g_{\bf a}^f,J_{\bf a}^f, \omega_{\bf a}^f)$. The analysis as we just did for $(g_{\bf a}, J_{\bf a}, \omega_{\bf a})$ yields,   {\it mutatis mutandis}, that $(g_{\bf a}^f, J_{\bf a}^f, \omega_{\bf a}^f)$ is defined on $\mathring{\rm C}_m \times T^m,$ where $\mathring{\rm C}_m \subset {\mathbb R}^m$ is the cone
$$\mathring{\rm C}_m= \Big\{ (\tilde{x}_1^0, \ldots, \tilde{x}_{\ell}^0;  \tilde{x}^1_1, \ldots, \tilde{x}^1_{n_1}; \ldots ; \tilde{x}^{\ell}_1, \ldots, \tilde{x}^{\ell}_{n_{\ell}}) \ : \ \tilde{x}^j_{k_j} >0, j=0, \ldots, \ell, k_j=1, \ldots, n_j\Big\},$$
and satisfies the boundary conditions  with respect to the normals 
$$v^j_{k_j} = \Big(\frac{a_0 \cdots a_{\ell}}{a_j}\Big)e^j_{k_j}, \ \  j=0, \ldots \ell, k_j=1, \ldots, n_j.$$
The normals $(v^j_{k_j})$ form a basis of $\mathbb{C}^m$, so they span a lattice $\Lambda^0_{\bf a}$. With respect to it, ($\mathring{\rm C}_m, \Lambda^0_{\bf a})$ is  
the Delzant polytope associated to  the standard toric ${\mathbb C}^m$ and $(g_{\bf a}^f,J_{\bf a}^f, \omega_{\bf a}^f)$ is just the (standard) flat toric metric. It is a well-known (and easy to show) fact that the square-norm function $||z||^2$ on $\mathbb{C}^m$ is expressed in terms of the momenta as
\begin{equation}\label{norm-function}
||z||^2=  2 \sum_{j=0}^{\ell}\sum_{k_j=1}^{n_j} \Big(\frac{a_0\cdots a_{\ell}}{a_j}\Big)\tilde{x}^j_k .
\end{equation}

By Proposition~\ref{general}  (see also the proof of Corollary~\ref{c:compactification}), the Delzant image of the toric manifold $(M_{\bf a}, \omega_{\bf a}, J_{\bf a})$ is ${\rm P}_m \subset {\rm C}_m,$ whereas the corresponding lattice $\Lambda_{\bf a}$ is spanned by the normals $(v^j_{k_j})$ as above plus one additional normal $v_0$. It  follows that $\Lambda^0_{\bf a} \subset \Lambda_{\bf a}$. Note that $\Lambda_{\bf a}/\Lambda^0_{\bf a} \cong \Gamma_{\bf a}$,  where $\Gamma_{\bf a}$ is the abelian group defined in Theorem~\ref{thm:main}. Another look at the Delzant construction shows that the orbifold associated to $(\mathring{\rm C}_m, \Lambda_{\bf a})$ is nothing but the orbifold quotient (see also \cite[Rem.~2]{ACGT}) $$\mathbb{C}^m_{\bf a}:=\mathbb{C}^m/\Gamma_{\bf a}$$  so that we can think of $(g_{\bf a}^f, J_{\bf a}^f, \omega_{\bf a}^f)$ as the standard flat metric on $\mathbb{C}^m/\Gamma_{\bf a}$.  We shall use the inclusion $\mathring{\rm P}_m\subset \mathring{\rm C}_m$, which  in term of  $(\xi_1, \ldots, \xi_{\ell})$ becomes
\begin{equation*}
(\alpha_1,\alpha_2)\times \cdots \times (\alpha_{\ell-1}, \alpha_{\ell})\times (0, \infty) \subset (\alpha_1,\alpha_2)\times \cdots \times (\alpha_{\ell-1}, \alpha_{\ell})\times (\alpha_{\ell}, \infty), 
\end{equation*}
in order to compare the restrictions of  $g^f_{\bf a}$ and $g_{\bf a}$  to  the common subset
$$(M_{\bf a}^0)_{\lambda} := (\alpha_1,\alpha_2)\times \cdots \times (\alpha_{\ell-1}, \alpha_{\ell})\times (\lambda, \infty) \times P_{\bf a}$$
for some $\lambda>0$. Noting that  $\xi_{\ell}$ extends as a smooth function on $(M_{\bf a})_{\lambda}$ and $(\mathbb{C}^m_{\bf a})_{\lambda}$  (as it is a simple root of the polynomial 
$\prod_{j=1}^{\ell}(t-\xi_j) = t^{\ell} - \sigma_1 t^{\ell-1} + \cdots + (-1)^{\ell} \sigma_{\ell}$ which has smooth coefficients equal to momenta), we denote  respectively  by $(\mathbb{C}^m_{\bf a})_{\lambda}\subset \mathbb{C}^m_{\bf a}$ and $(M_{\bf a})_{\lambda}\subset M_{\bf a}$ the pre-images of $(\lambda, \infty)$ by the smooth function $\xi_{\ell}$, which are complements of compact subsets in  $M_{\bf a}$ and ${\mathbb C}^m_{\bf a}$, respectively. Our objective then will be to find a $T^m$-equivariant biholomorphism $\Phi: ((M_{\bf a})_{\lambda}, J_{\bf a})  \to (\mathbb{C}^m_{\bf a})_{\lambda}$ and evaluate at infinity $(\Phi^{-1})^*(\omega_{\bf a} - \omega_{\bf a}^f)$.

By \cite[Thm.~1, (65) \& (70)]{ACG},  K\"ahler potentials $H^f(\xi), H(\xi)$   for $(J_{\bf a}^f, \omega_{\bf a}^f)$ and $(J_{\bf a}, \omega_{\bf a})$, respectively,  are given by
\begin{equation}\label{flat-potential}
\begin{split}
H^f(\xi) &= \sum_{j=1}^{\ell}\Big(\sum_{r=0}^{\ell} (-1)^r \alpha_j^{\ell-r} \sigma_{r}^{\alpha}\Big) H^j  + \sum_{j=1}^{\ell} \int_{\alpha_j}^{\xi_j} \frac{p_c(t)\Theta(t)}{p(t)} dt \\
         &=\frac{1}{2} \Big(\sum_{j=1}^{\ell} \xi_j -\sigma^{\alpha}_1\Big)=\frac{1}{2}(\sigma_1-\sigma_1^{\alpha}) =  \frac{1}{2} \sum_{j=1}^{\ell} \Big(\frac{a_0 \cdots a_{\ell}}{a_j}\Big) x_j \\
         &=  \frac{1}{2} \sum_{j=0}^{\ell}\sum_{k_j=1}^{n_j} \Big(\frac{a_0\cdots a_{\ell}}{a_j}\Big)\tilde{x}^j_k = \frac{1}{4}||z||^2,
\end{split}
\end{equation}
\begin{equation}\label{non-flat-potential}
\begin{split}         
H(\xi) &=  \frac{1}{2} \Big(\sum_{j=1}^{\ell} \xi_j  - \sigma_1^{\alpha} + \int_{\lambda}^{\xi_{\ell}} \Big(\frac{p(x)}{F_{\ell}(x)} -1\Big) dx\Big)\\
&= \frac{1}{4}||z||^2 - \int_{\lambda}^{\xi_{\ell}}\frac{ax + b}{F_{\ell}(x)} dx,
\end{split}
\end{equation}   
where we have used the expression \eqref{norm-function} for the square norm function $||z||^2$ on $\mathbb{C}^m$. In the above equalities,   $\sigma^{\alpha}_{r}$ stands for  the $r$-th elementary symmetric function of $(\alpha_1, \ldots, \alpha_{\ell})$, and  $H^j$ denotes the K\"ahler potential \eqref{base-potential}. Comparing with the notation of \cite{ACG}, the r.h.s of the first line of each equality is given by  $\sum_{r=0}^{\ell} (-1)^r \sigma^{\alpha}_r u_r$  where $u_0$  is the K\"ahler potential given by \cite[(65)]{ACG} whereas $u_r$ ($r >0$) are the pluriharmonic functions given by 
\cite[(70)]{ACG},  and we have also  used that  in our case  $p_c(t)=p(t)/\Theta(t)= \prod_{j=1}^{\ell} (t-\alpha_j)^{n_j}$, $F_j(t)=p(t)$ for $j=1, \ldots, \ell-1$,  and $F_{\ell}(x) =p(x) +ax  +b$. (Recall that,  by the classification of \cite{ACG},  in the flat case  $a=b=0$.) 

An important observation, which follows from the second and third lines in \eqref{flat-potential} is that
\begin{equation}
\sum_{j=1}^{\ell} \xi_j = \sigma_1^{\alpha} + \sum_{k=1}^{\ell} \Big(\frac{a_0 \cdots a_{\ell}}{a_k}\Big)x_k =\sigma^{\alpha}_1 + \frac{1}{2}||z||^2.
\end{equation}\label{key1}
As $(\xi_1, \ldots, \xi_{\ell-1}) \in [\alpha_1,\alpha_2] \times \cdots \times [\alpha_{\ell-1},\alpha_\ell]$ are bounded functions on $(M_{\bf a})_{\lambda} = (\mathbb{C}_{\bf a}^m)_{\lambda}$,  $\frac{1}{2}||z||^2$ goes to $\infty$ with the same rate as $\xi_{\ell}$  does (recall that  $\xi_{\ell}$ is smooth on $(M_{\bf a})_{\lambda}$  as being a simple root of the polynomial 
$\prod_{j=1}^{\ell}(t-\xi_j) = t^{\ell} - \sigma_1 t^{\ell-1} + \cdots + (-1)^{\ell} \sigma_{\ell}$  whereas t $\xi_1, \ldots, \xi_{{\ell}-1}$ are at least continuous, see \cite[Prop.~14]{ACG}). In other words,
\begin{equation}\label{estimate1}
\xi_{\ell} = \frac{1}{2}||z||^2 + O(1).
\end{equation}

It is shown in \cite[(70)]{ACG} that pluriharmonic functions  $y_j^f, y_j$ respectively  for the almost-complex structures $J_{\bf a}^f$ and $J_{\bf a}$ on $(M_{\bf a})_{\lambda}$ are given by
\begin{equation}\label{pluriharmonic}
\begin{split}
y_j^f =& -\sum_{i=1}^{\ell} (-1)^j \alpha_i^{\ell-j} H^i -\sum_{i=1}^{\ell}\int^{\xi_i} \frac{(-1)^j x^{\ell-j}}{\Theta(x)} dx, \\
y_j = & -\sum_{i=1}^{\ell} (-1)^j \alpha_i^{\ell-j} H^i -\sum_{i=1}^{\ell}\int^{\xi_i} \frac{(-1)^j x^{\ell-j}p(x)}{\Theta(x)F_i(x)} dx \\
      =  &-\sum_{i=1}^{\ell} (-1)^j \alpha_i^{\ell-j} H^i -\sum_{i=1}^{\ell}\int^{\xi_i} \frac{(-1)^j x^{\ell-j}}{\Theta(x)} dx\\
       &  + (-1)^{j} \int^{\xi_{\ell}}  \frac{x^{\ell-j}(F_{\ell}(x)-p(x))}{\Theta(x) F_{\ell}(x)} dx\\
      = &  y_j^f + (-1)^j \int^{\xi_{\ell}}  \frac{x^{\ell-j}(ax+b)}{\Theta(x) F_{\ell}(x)} dx, 
    \end{split}
\end{equation}  
Note that in the above formulae, $y_j^f$ and $y_j$ are determined modulo constants of integrations, an ambiguity which we  fix  by letting 
\begin{equation}\label{hol-normal}
\begin{split}
\tilde y_k^f :=&\sum_{i=1}^{\ell} \int^{\xi_i} \frac{\prod_{j\neq k}(x-\alpha_j)}{\Theta(x)} dx\\
                    = &\sum_{i=1}^{\ell} \log|\xi_i - \alpha_k |.\\                
\tilde y_k := &  \sum_{i=1}^{\ell} \int^{\xi_i} \frac{\prod_{j\neq k}(x-\alpha_j) p(x)}{\Theta(x) F_i(x)} dx\\
                = & \sum_{i=1}^{\ell} \log|\xi_i - \alpha_k|  - \int_{\lambda}^{\xi_{\ell}} \frac{ax+b}{(x-\alpha_k)F_{\ell}(x)}dx,
\end{split}
\end{equation}
where the products are defined over indices $j, k \in \{1, \ldots, \ell\}$.

As  in the proof of Lemma~\ref{potential},  each set $(y_1^f, \ldots, y_{\ell}^f)$ (resp. $(y_1, \ldots, y_{\ell})$) of pluriharmonic functions can be completed with the pull-backs of pluriharmonic functions $(y^j_{k_j})$ on each factor $\mathbb{C}P^{n_j}$ and with respective angular coordinates $(t_1^f, \ldots, t_m^f)$  (resp. $(t_1, \ldots, t_m), $ so that $(y_j^f, y^j_{k_j})$  (resp. $(y_j, y^j_{k_j})$) form the $T^m$-invariant part of an adapted $T^m$-equivariant holomorphic chart on $(\mathring{\mathbb{C}}^m_{\bf a})_{\lambda}$ (resp. $(\mathring{M}_{\bf a})_{\lambda}$).  We are going to construct a $T^{m}$-equivariant diffeomorphism $\Phi$ on $(M_{\bf a})_{\lambda}$  which maps $J_{\bf a}$ to  $J_{\bf a}^f$  and satisfies
$$y_r \circ \Phi = y_r^f; \   \   y^j_{k_j} \circ \Phi  = y^j_{k_j};   \ \ t_k \circ \Phi = t_k^{f}.$$
As $\Phi$ acts trivially on $y^j_{k_j}$,  it  must  be fibre-wise, i.e. defined by a diffeomorphism $(\varphi_1(\xi), \ldots, \varphi_{\ell} (\xi)):= (\xi_1, \ldots, \xi_{\ell}) \circ \Phi$, which we shall denote by $\Phi(\xi)$.

Using that  
$$\prod_{j\neq k}(x-\alpha_j) = \sum_{j=1}^{\ell} (-1)^{j-1}\sigma_{j-1}^{\alpha}(\hat\alpha_k)x^{\ell-j},$$ 
where we recall  that $\sigma_{j-1}^{\alpha}(\hat \alpha_k)$ denotes the $(j-1)$-th elementary symmetric function of the $(\ell-1)$ elements $\{\alpha_1, \ldots, \alpha_{\ell}\}\setminus \{\alpha_k\}$, and that for each $k=1, \ldots, \ell$   
$$\sum_{j=1}^{\ell} (-1)^{j-1}\sigma_{j-1}^{\alpha}(\hat\alpha_k) y^f_j= \Big(\sum_{j=1}^{\ell} (-1)^{j-1}\sigma_{j-1}^{\alpha}(\hat\alpha_k) y_j\Big) \circ \Phi,$$
\eqref{pluriharmonic} implies that $\Phi(\xi)$ sends $\tilde y_k$ to $\tilde y_k^f$. Exponentiating \eqref{hol-normal}, one finds that
\begin{equation}\label{main-identity}
{\rm exp}\Big(-\int_{\lambda}^{\varphi_{\ell}(\xi)} \frac{ax+b}{(x-\alpha_k)F_{\ell}(x)}dx\Big) (x_k \circ \Phi) = x_k,
\end{equation}
see \eqref{x}. As $\varphi_{\ell}(\xi) \in (\lambda, \infty)$ and $F_{\ell}(x)$ is a polynomial of degree $m\ge 2$, it follows that  the functions $\int_{\lambda}^{\varphi_{\ell}(\xi)} \frac{ax+b}{(x-\alpha_k)F_{\ell}(x)}dx$ are uniformly bounded on $(M_{\bf a})_{\lambda}= (\mathbb{C}^m_{\bf a})_{\lambda}$, i.e.
$$x_k \circ \Phi = O(x_k).$$
Using \eqref{key1} and that $\xi_i$ and $\varphi_i(\xi)$ take values  in $[\alpha_i, \alpha_{i+1}]$ for $i=1, \ldots, \ell-1$ (and are  thus uniformly bounded) we conclude
\begin{equation}\label{intermediate-estimate}
\varphi_{\ell}(\xi) = M\xi_{\ell} + O(1) = \frac{M}{2} ||z||^2 + O(1),
\end{equation}
for some positive real constant $M$. A closer look at \eqref{main-identity} (using \eqref{key1} and \eqref{intermediate-estimate})  yields in fact
\begin{equation}\label{key2}
\begin{split}
\sum_{j=1}^{\ell} (\varphi_{j}(\xi) -\xi_{j} ) &= \xi_{\ell}\int_{\lambda}^{\varphi_{\ell}(\xi)}\Big(\frac{ax + b}{xF_{\ell}(x)}\Big) dx  + O(\xi_{\ell}^{1-m})\\
                                                                &= -\frac{a}{2(m-1)}\xi_{\ell}^{2-m} + O(\xi_{\ell}^{1-m}).
\end{split}                                                                
\end{equation}
Thus,  using  \eqref{flat-potential},  together with \eqref{key1}
\begin{equation*}
\begin{split}
H(\Phi(\xi))-H^{f}(\xi) = & \Big(H(\Phi(\xi)) - H^f(\Phi(\xi))\Big) + \Big(H^f(\Phi(\xi)) - H^f(\xi) \Big)\\
                                 = & -\frac{1}{2} \int_{\lambda}^{\varphi_{\ell}(\xi)}\Big(\frac{ax + b}{F_{\ell}(x)}\Big) dx + \frac{1}{2}\sum_{j=1}^{\ell} \Big(\varphi_j(\xi) - \xi_j\Big) \\
                                 =& -\frac{a}{2} \int_{\lambda}^{\xi_{\ell}}\frac{x}{F_{\ell}(x)}dx  -\frac{a}{4(m-1)}\xi_{\ell}^{2-m} + O(\xi_{\ell}^{1-m}),
                                 \end{split}
\end{equation*}                                 
showing that (via \eqref{estimate1})
\begin{equation*}
\begin{split}
H(\Phi(\xi))  &=  \frac{1}{4}||z||^2 + \frac{a}{4(m-2)(m-1)}\xi_{m}^{2-m} + O(\xi_m^{1-m}) \\
                   & = \frac{1}{4}||z||^2 + \frac{2^{m-4}a}{(m-2)(m-1)}||z||^{4-2m} + O(||z||^{3-2m})
                   \end{split}
                   \end{equation*}
when $m\ge 3$, and
\begin{equation*}
\begin{split}
H(\Phi(\xi)) &=  \frac{1}{4}||z||^2 - \frac{a}{4}\log\xi_{2} - \frac{a}{4} + O(\xi_2^{-1})\\
                  &= \frac{1}{4}||z||^2 - \frac{a}{4}\log ||z||^2 + O(||z||^{-2})
\end{split}
\end{equation*}
when $m=2$.  \end{proof} 
\subsection{Proof of Theorem~\ref{thm:main}} Theorem~\ref{thm:main} follows from  of Corollary~\ref{c:compactification} and Proposition~\ref{p:asymptotics}.\hfill $\Box$
                                                           
 \section{Scalar-flat  K\"ahler ALE surfaces}\label{s:m=2} This is the case $m=2$. There are  two possibilities, depending on the number of distinct positive integers $a_i$ of the weigh vector ${\bf a}=(a_0, a_1, a_{2})$.

 \smallskip
 \noindent {\bf Case 1.} $a_2> a_1$. The metric is of the form \eqref{order-ell} with all $n_j=0$ and $\ell=2$.  This is the {\it orthotoric construction} of \cite{ACG}:
 \begin{equation}\label{orthotoric}
\begin{split}
g_{\bf a} = &(\xi_2-\xi_1)\Big(-\frac{d\xi_1^2}{\Theta_1(\xi_1)} + \frac{d\xi_2^2}{\Theta_2(\xi_2)}\Big) \\
        &  +  \frac{1}{(\xi_2-\xi_1)} \Big(-\Theta_1(\xi_1)(dt_1 + \xi_2 dt_2)^2  + \Theta_2(\xi_2)(dt_1 + \xi_1 dt_2)^2\Big)\\
\omega_{\bf a} = &d\sigma_1 \wedge dt_1 + d\sigma_2 \wedge dt_2\\
                           =& d\xi_1 \wedge (dt_1 + \xi_2 dt_2) + d\xi_2\wedge(dt_1 + \xi_1 dt_2).
                           \end{split}
\end{equation}
where:
\begin{enumerate}
\item[$\bullet$] $\xi_1 \in (-a_0a_2, -a_0a_1), \xi_2 \in (0, \infty)$, $\Theta_1(x) = 2(x+ a_0a_1)(x+a_0a_2), \Theta_2(x)=2x(x+a_0^2)$, $b_0>0$;
\item[$\bullet$] $\sigma_1=\xi_1 + \xi_2; \sigma_2=\xi_1+ \xi_2$;
\end{enumerate}

We have shown in the previous section that using suitable momentum coordinates $(x_1, x_2)$ instead of $(\xi_1, \xi_2)$, the metric is defined on $\mathring{M}=\mathring{\rm P} \times T^2$  where 
\begin{equation}\label{standard-polytope}
\mathring{\rm P}=\Big\{(x_1,x_2) :  x_1>0, x_2>0,  x_1+ x_2 -1 >0\Big\}.
\end{equation}
and  satisfies the first-order boundary conditions \eqref{toric-boundary} with respect to the inward normals 
\begin{equation}\label{weights}
v_1=a_0a_2(1,0), \ \ v_2=a_0a_1 (0,1), \ \ v_0=a_1a_2(1,1).
\end{equation}
By Corollary~\ref{c:compactification}, it extends to define a  scalar-flat K\"ahler  ALE metric on $\mathbb{C}P^2_{-a_0, a_1,a_2}$ when $a_i$ are taken to be integers with ${\rm gcd}(a_0,a_1,a_2)=1$. 

\smallskip
\noindent {\bf Case 2.} $a_2=a_1$. This is the case $\ell=1$ and $n_1=1$ in \eqref{order-ell}, when the metric $g_{\bf a}$ becomes a particular case of  \eqref{metric} with $n=1$ (often referred to as the {\it Calabi ansatz}), see Remark~\ref{r:ell=1}.  The toric aspects of \eqref{metric}  allow one to construct scalar-flat K\"ahler ALE metric on $\mathring{M}=\mathring{\rm P} \times T^2$ for which the corresponding matrix-valued function ${\bf H}$  verifies the boundary conditions  \eqref{toric-boundary} with respect to normals 
\begin{equation}\label{calabi-weights}
v_1 = a_0a_1(1,0), \ \ v_2= a_0a_1(0,1), \ \ v_0= a_1^2(1,1), \ a_0, a_1 >0,
\end{equation}
i.e.  \eqref{weights} in the case when $a_1=a_2$. Indeed,  let  $y \in [0,\ell]$ be a momentum coordinate for an $S^1$-isometric action on $\check{g}_{{\mathbb C}P^1}$ (which reflects  the normalization used for $\check{g}_{{\mathbb C}P^1}$),  and let 
$$\sigma_1=z, \sigma_2 = zy,$$
be the momenta for the two commuting Killing fields of \eqref{metric}. The metric is then defined on $\mathring{\Sigma} \times T^2$ where 
$$\mathring{\Sigma} =\{(\sigma_1,\sigma_2) : \sigma_1-a >0,  \  \sigma_2>0, \  \ell \sigma_1 - \sigma_2 >0\},$$
and verifies  \eqref{toric-boundary} with respective normals
$$u_1=(1,0), \ \ u_2=(0,1), \ \ u_0=(\ell, -1).$$
The affine transformation
$$x_1=\frac{1}{a\ell}(\ell \sigma_1-\sigma_2), \ \ x_2= \frac{1}{a\ell} \sigma_2, $$
sends $\mathring{\Sigma}$ to $\mathring{\rm P}$, with inward normals
$$v_1= \ell a(1,0), \ \ v_2=\ell a(0,1),  \ \ v_0= a(1,1).$$
Thus, putting 
$$a=a_1^2, \ \ \ \ell=\frac{a_0}{a_1}$$
we obtain a scalar-flat K\"ahler solution on $\mathring{\rm P} \times T^2$, satisfying \eqref{toric-boundary} with respect to the normals \eqref{calabi-weights}. The metric is Ricci-flat iff  $a_0 = 2a_1$.

We now turn to a $4$-dimensional phenomenon  brought to our attention by J. Viaclovsky~\cite{jeff}.  Being of Calabi-type or orthotoric, the metric $(g_{\bf a}, \omega_{\bf a}, J_{\bf a})$ is also {\it ambitoric} in the sense of \cite{ambitoric-1}. Indeed, assuming that we are in the  generic orthotoric case for instance, it has been shown in \cite{ACG0} that 
\begin{equation}\label{ambitoric}
\begin{split}
\tilde{g}_{\bf a} &= \frac{1}{(\xi_1-\xi_2)^2} g_{\bf a}, \\
{\tilde \omega}_{\bf a} &= \frac{d\xi_1 \wedge (dt_1 + \xi_2 dt_2)}{(\xi_1 - \xi_2)^2} - \frac{d\xi_2\wedge(dt_1 + \xi_1 dt_2)}{(\xi_1-\xi_2)^2}
\end{split}
\end{equation}
defines another K\"ahler structure on $\mathring{M}=\D\times T^2$ for which $\tilde g_{\bf a}$ is in the conformal  class of $g_{\bf a}$ but the corresponding complex structure ${\tilde J}_{\bf a}$ (or, equivalently, the symplectic form $\tilde \omega_{\bf a})$ induces the opposite orientation of $(\mathring{M}, J_{\bf a})$. As $(g_{\bf a}, \omega_{\bf a}, J_{\bf a})$ is scalar-flat, it is anti-self-dual with respect to its canonical orientation. Thus, $(\tilde g_{\bf a}, \tilde \omega_{\bf a}, \tilde J_{\bf a})$ becomes self-dual with respect to its canonical orientation, or equivalently Bochner-flat, see \cite{bryant} for a complete local classification. Furthermore, the momenta for the Killing vector fields $K_i=\frac{\partial}{\partial t_i}$ with respect to $\tilde \omega_{\bf a}$ are
\begin{equation}\label{negative-momenta}
\tilde \sigma_1 = -\frac{1}{(\xi_1 - \xi_2)}, \ \ \tilde \sigma_2 = - \frac{\xi_1 + \xi_2}{2(\xi_1 - \xi_2)},
\end{equation}
whereas the corresponding matrix-valued function  $\tilde{\bf H}=\Big(\tilde g_{\bf a} (K_i,K_j)\Big)$ becomes 
\begin{equation}\label{tilde-H}
\tilde{{\bf H}} = \frac{1}{(\xi_1-\xi_2)^3}  
\begin{pmatrix}  \Theta_1(\xi_1)-\Theta_2(\xi_2) &  \xi_2 \Theta_1(\xi_1) - \xi_1\Theta_2(\xi_2) \\
\xi_2\Theta_1(\xi_1) - \xi_1 \Theta_2(\xi_2) & \xi_2^2\Theta_1(\xi_1) - \xi_1^2\Theta_2(\xi_2) \end{pmatrix}.
\end{equation}
It is straightforward to check that
\begin{lemma}\label{extension} $\tilde {\bf H}$ is polynomial with respect to $(\tilde \sigma_1, \tilde \sigma_2)$.
\end{lemma}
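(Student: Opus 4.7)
The plan is a direct computation, hinging on the fact that $\Theta_1$ and $\Theta_2$ share the same leading coefficient. Inverting~\eqref{negative-momenta} gives the two basic identities
\begin{equation*}
\frac{1}{\xi_1-\xi_2} = -\tilde\sigma_1, \qquad \frac{\xi_1+\xi_2}{\xi_1-\xi_2} = -2\tilde\sigma_2,
\end{equation*}
both of which are affine (hence polynomial) in $(\tilde\sigma_1,\tilde\sigma_2)$. Setting $s := \xi_1+\xi_2$ and $d := \xi_1-\xi_2$, any monomial $s^a d^b/d^3$ with $a,b \ge 0$ and $a+b \le 3$ is thus a polynomial in $(\tilde\sigma_1,\tilde\sigma_2)$: if $b=3$ (forcing $a=0$) it equals $1$, otherwise $s^a/d^{3-b} = (s/d)^a(1/d)^{3-b-a}$ is polynomial of degree $3-b$ in $(\tilde\sigma_1,\tilde\sigma_2)$ through the identities above.

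It therefore suffices to check that each of the three distinct numerator entries of $(\xi_1-\xi_2)^3\tilde{\bf H}$ in~\eqref{tilde-H}, viewed as a polynomial in $(\xi_1,\xi_2)$, has total degree at most $3$, since passing to the symmetric pair $(s,d)$ preserves total degree. Here the explicit form $\Theta_1(x) = 2x^2 + 2a_0(a_1+a_2)x + 2a_0^2 a_1 a_2$, $\Theta_2(x) = 2x^2 + 2a_0^2 x$ is crucial: both begin with $2x^2$, so the top-order terms of each numerator collapse. Explicitly,
\begin{equation*}
\Theta_1(\xi_1)-\Theta_2(\xi_2) = 2(\xi_1-\xi_2)(\xi_1+\xi_2) + (\text{degree}\le 1)
\end{equation*}
has degree $\le 2$; the $(1,2)$-numerator $\xi_2\Theta_1(\xi_1) - \xi_1\Theta_2(\xi_2)$ has cubic part $2\xi_1^2\xi_2 - 2\xi_1\xi_2^2 = 2\xi_1\xi_2(\xi_1-\xi_2)$ and so has total degree $3$; finally, the $(2,2)$-numerator $\xi_2^2\Theta_1(\xi_1) - \xi_1^2\Theta_2(\xi_2)$ has vanishing quartic part ($2\xi_1^2\xi_2^2 - 2\xi_1^2\xi_2^2 = 0$), leaving a polynomial of degree $\le 3$.

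Combining these two steps proves the lemma. The main obstacle is the degree bound on the numerators: it depends on the exact matching of the leading coefficients of $\Theta_1$ and $\Theta_2$, a feature of the orthotoric ansatz which ultimately reflects the Bochner-flatness of $(\tilde g_{\bf a}, \tilde J_{\bf a})$. Without that matching, one would inevitably pick up a $1/\tilde\sigma_1$-type contribution in the $(2,2)$-entry and the polynomiality would fail.
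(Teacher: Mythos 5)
Your proof is correct: the degree count on the numerators of $(\xi_1-\xi_2)^3\tilde{\bf H}$ (degree $\le 2$, $3$, $3$ respectively, using that $\Theta_1$ and $\Theta_2$ both have leading coefficient $2$) combined with the observation that any monomial of total degree $\le 3$ in $(\xi_1+\xi_2,\xi_1-\xi_2)$ divided by $(\xi_1-\xi_2)^3$ is a monomial in $1/(\xi_1-\xi_2)$ and $(\xi_1+\xi_2)/(\xi_1-\xi_2)$, hence polynomial in $(\tilde\sigma_1,\tilde\sigma_2)$, is exactly the direct verification the paper leaves to the reader with the phrase ``it is straightforward to check.'' Your closing remark correctly identifies that the cancellation of the quartic term in the $(2,2)$-entry is the only place where the matching of leading coefficients is actually needed.
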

Furthermore, as the general theory in \cite{ambitoric-2} predicts and as it can be checked explicitly, the change of coordinates \eqref{negative-momenta} sends the domain $\mathring{\Xi}=(\alpha_1,\alpha_2)\times (0, +\infty)$ into the (bounded) simplex 
\begin{equation}\label{tilde-delta}
\begin{split}
\mathring{\tilde \Sigma}= \Big\{(\tilde \sigma_1, \tilde \sigma_2) \  :  & \  L_{1}= \alpha_1 \tilde \sigma_1 + \tilde \sigma_2 - \frac{1}{2} >0, \ L_{2}= -\alpha_2 \tilde \sigma_1 - \tilde \sigma_2 + \frac{1}{2}>0,\\
                                                                                                  &  \ L_{0} = \tilde \sigma_2 +\frac{1}{2}>0   \Big\},\\
                                                                                                  \end{split}
                                                                                                  \end{equation}
 where, we recall, $\alpha_1 = -a_0a_2, \alpha_2=-a_0a_1$ are the roots of $\Theta_1$,  
and $\tilde {\bf H}$ extends smoothly  over $\partial \mathring{\tilde \Sigma}$, satisfying the boundary conditions \eqref{toric-boundary} at any face of $\mathring{\tilde \Sigma}$, except possibly at the vertex $(0,\frac{1}{2})$,  with respect to the inward normals 
\begin{equation}\label{normals1}
{\tilde u}_{j} = \frac{-2}{\Theta_1'(\alpha_j)}(\alpha_j, 1), \ j=1,2, \ \ {\tilde u}_{0}= \frac{-2}{\Theta_2'(0)}(0, 1)=\frac{1}{a_0^2}(0,1),
\end{equation}
By Lemma~\ref{extension} and continuity,   the boundary conditions at the vertex $(0,\frac{1}{2})$ must also hold. Noting that the affine transformation
$$\tilde x_1 =  \frac{\alpha_2}{\alpha_1-\alpha_2} \Big(\alpha_1 \tilde \sigma_1 + \tilde \sigma_2 - \frac{1}{2} \Big), \ \ \tilde x_2 = \frac{\alpha_1}{\alpha_2-\alpha_1}\Big(\alpha_2 \tilde \sigma_1 + \tilde \sigma_2 - \frac{1}{2}\Big)$$
sends $\mathring{\tilde \Sigma}$ to the standard simplex
$$\mathring{\rm S}=\Big\{(\tilde x_1,\tilde x_2) :  \ \tilde x_1 >0, \ \tilde x_2>0, \ 1-\tilde x_1-\tilde x_2>0 \Big\}$$
and the normals \eqref{normals1} to 
$$v_1= \frac{1}{a_0a_1}(1,0), \ \ v_2= \frac{1}{a_0a_2}(0,1), \ \ v_0= \frac{1}{a_0^2}(-1,-1).$$
We conclude that 
\begin{equation*}v_1=\lambda_{\bf a}  a_0a_2(1,0), \ \ v_2= \lambda_{\bf a} a_0a_1(0,1), \ \ v_0=\lambda_{\bf a} a_1a_2(-1,-1),
\end{equation*}
where
\begin{equation}\label{negative-weights}
\lambda_{\bf a}=a_0^2a_1a_2.
\end{equation}

\smallskip

Similar phenomenon happens in the Calabi-type case too, i.e. when $a_1=a_2$, see \cite{DL}.  In the toric setting described in this section,  the negatively oriented conformal K\"ahler structure is 
$${\tilde g}_{\bf a} =\frac{1}{z^2} g_{\bf a}, \ \ \tilde{\omega}_{\bf a} = \frac{1}{z}\Big(\check{\omega}_{\mathbb{C}P^1} - \frac{1}{z} dz \wedge \theta\Big),$$
which is again of Calabi type,  by  replacing $z$ with $\tilde z=1/z$. Thus, 
$$\tilde \sigma_1 = \frac{1}{z}, \ \ \tilde \sigma_2 =  \frac{y}{z}$$
are momenta with respect to $\tilde \omega_{\bf a}$;  the K\"ahler metric $(\tilde g_{\bf a}, \tilde \omega_{\bf a})$ is thus defined over $\mathring{\tilde \Sigma} \times T^2$, where
$$\mathring{\tilde \Sigma}=\Big\{\frac{1}{a}- \tilde \sigma_1 >0, \  \tilde \sigma_2 >0, \ \  \ell \tilde \sigma_1 - \tilde \sigma_2 >0\Big\}.$$
It satisfies \eqref{toric-boundary} on the faces of $\mathring{\tilde \Sigma}$ with respect to the normals
$$\tilde u_1= (-1,0), \ \ \tilde u_2=(0,1), \ \ \tilde u_0=(\ell, -1) ,$$
except possibly at the vertex $(0,0)$. As $(\tilde g_{\bf a}, \tilde \omega_{\bf a})$ is Bochner-flat, the matrix valued function $\tilde {\bf H} = \Big(\tilde g_{\bf a}(K_i,K_j)\Big)$ is polynomial in momenta  (this follows, for example, from the local classification \cite{bryant}, but in our situation can also be checked directly), so that the boundary conditions at $(0,0)$ are satisfied too.
The  affine transformation 
\begin{equation}\label{tilde-x}
\tilde x_1=\frac{a}{\ell}(\ell \tilde \sigma_1-\tilde \sigma_2), \ \ \tilde x_2= \frac{a}{\ell} \tilde \sigma_2,
\end{equation}
sends $\mathring{\Sigma}$ to $\mathring{S}$, with inward normals
$$v_1= \frac{\ell}{a}(1,0), \ \ v_2=\frac{\ell}{a}(0,1),  \ \ v_0= \frac{1}{a}(-1,-1).$$
As $a=a_1^2$, $\ell= a_0/a_1$, we get again
\begin{equation*}
v_1= \lambda_{\bf a} a_0a_1(1,0), \  \ v_2 = \lambda_{\bf a} a_0 a_1 (0,1),  \ \ v_0 = \lambda_{\bf a} a_1^2 (-1,-1),
\end{equation*}
where
\begin{equation}\label{negative-weights-calabi}
 \lambda_{\bf a}=1/a_1^4.
\end{equation}
We summarize 
\begin{prop}\label{Bochner-flat} For any  ${\bf a}=(a_0,a_1,a_2)$, with $a_i>0$,  the K\"ahler metric $(\tilde g_{\bf a}, \tilde \omega_{\bf a}, \tilde J_{\bf a})$ is  Bochner-flat, and,  on $\mathring{\rm S}\times T^2$,  satisfies \eqref{toric-boundary} with respect to the inward normals
\begin{equation}\label{bochner-flat-weights}
v_1=\lambda_{\bf a}  a_0a_2(1,0), \ \ v_2= \lambda_{\bf a} a_0a_1(0,1), \ \ v_0=\lambda_{\bf a} a_1a_2(-1,-1),
\end{equation}
where $\lambda_{\bf a}>0$ is the  constant given by \eqref{negative-weights} when $a_1\neq a_2$,  and by \eqref{negative-weights-calabi} when $a_1=a_2$.
\end{prop}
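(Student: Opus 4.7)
The proof is the natural summary and assembly of the two parallel computations (orthotoric case $a_1\neq a_2$ and Calabi-type case $a_1=a_2$) already outlined in the preceding pages. I would organize it as follows.

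First, I would treat the two cases uniformly. In the orthotoric case, set $f := \xi_1-\xi_2$; in the Calabi-type case, set $f:=z$. Define $\tilde g_{\bf a}:=f^{-2} g_{\bf a}$ together with the $2$-form $\tilde \omega_{\bf a}$ of \eqref{ambitoric} (respectively its Calabi-type analogue $\tilde\omega_{\bf a} = \frac{1}{z}(\check\omega_{\mathbb{C}P^1} - \frac{1}{z}dz\wedge\theta)$). The fact that $\tilde \omega_{\bf a}$ is closed, that it is the K\"ahler form of $\tilde g_{\bf a}$ with respect to an almost-complex structure $\tilde J_{\bf a}$ inducing the opposite orientation on $\mathring{M}$, and that $\tilde J_{\bf a}$ is integrable, is precisely the content of the ambitoric ansatz of \cite{ambitoric-1} (respectively of the observation \cite{DL} in the Calabi case), so may be quoted.

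To establish Bochner-flatness, I would invoke the standard dictionary between four-dimensional conformal geometry and K\"ahler geometry. By Corollary~\ref{c:compactification}, $(g_{\bf a},\omega_{\bf a},J_{\bf a})$ is scalar-flat K\"ahler, hence anti-self-dual with respect to the canonical orientation of $(M_{\bf a},J_{\bf a})$. Since $\tilde g_{\bf a}$ lies in the same conformal class, it is likewise anti-self-dual with respect to that orientation, equivalently self-dual with respect to the orientation induced by $\tilde J_{\bf a}$. A self-dual K\"ahler metric in real dimension four is Bochner-flat (cf.~\cite{bryant}), which yields the first assertion.

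Second, I would verify the boundary conditions. Using the formulas \eqref{negative-momenta} and \eqref{tilde-H} (or their Calabi analogues displayed above), compute the moment map of the $T^2$-action with respect to $\tilde\omega_{\bf a}$ and identify the image polytope $\mathring{\tilde\Sigma}$ of \eqref{tilde-delta}. The expression \eqref{tilde-H} shows that $\tilde{\bf H}(\tilde \sigma_1,\tilde\sigma_2)$ extends smoothly across the three facets of $\mathring{\tilde\Sigma}$, and direct inspection of the boundary behaviour at those facets yields \eqref{toric-boundary} with respect to the inward normals $\tilde u_j$ of \eqref{normals1}. The delicate point is the vertex $(0,\tfrac12)$ (respectively $(0,0)$ in the Calabi case), where the simplex $\mathring{\tilde\Sigma}$ acquires an extra corner under the conformal inversion; here I would invoke Lemma~\ref{extension}, which asserts that $\tilde{\bf H}$ is polynomial in $(\tilde\sigma_1,\tilde\sigma_2)$ (verified by a short direct computation from \eqref{tilde-H}, or alternatively from Bryant's local normal form for Bochner-flat metrics). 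Polynomiality together with continuity forces \eqref{toric-boundary} to persist at this vertex.

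Finally, I would pass to the standard simplex. The affine change of coordinates indicated just after \eqref{tilde-delta} (and \eqref{tilde-x} in the Calabi case) carries $\mathring{\tilde\Sigma}$ to $\mathring{\rm S}$ and transports the labelled normals $\tilde u_j$ to $\tfrac{1}{a_0a_1}(1,0), \tfrac{1}{a_0a_2}(0,1), \tfrac{1}{a_0^2}(-1,-1)$ (respectively their Calabi counterparts). Multiplying through by the common rescaling factor $\lambda_{\bf a}=a_0^2a_1a_2$ (orthotoric) or $\lambda_{\bf a}=1/a_1^4$ (Calabi) produces exactly the normals \eqref{bochner-flat-weights}, completing the proof. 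The only genuine obstacle in this outline is the verification of the boundary condition at the new vertex, which is why the polynomial extension of Lemma~\ref{extension} is essential.
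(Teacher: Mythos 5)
Your proposal is correct and follows essentially the same route as the paper: the conformal inversion producing the oppositely oriented K\"ahler structure, the identification scalar-flat $\Rightarrow$ anti-self-dual $\Rightarrow$ self-dual for the reversed orientation $\Rightarrow$ Bochner-flat, the verification of \eqref{toric-boundary} on the facets of $\mathring{\tilde\Sigma}$ with the extra vertex handled by the polynomiality of $\tilde{\bf H}$ (Lemma~\ref{extension}) plus continuity, and the final affine normalization to $\mathring{\rm S}$ with the rescaling $\lambda_{\bf a}$. The paper's ``proof'' of the proposition is precisely the preceding computation that you have accurately assembled, so there is nothing to add.
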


\subsection{Proof of Theorem~\ref{thm:jeff}} By Proposition~\ref{Bochner-flat} and \cite[Prop.~1]{ACGT}, the metric $\tilde g_{\bf a}$ compactifies on $\CP^2_{a_0,a_1,a_2}$ (see e.g. \cite{abreu,ACGT} for a toric description of a compact weighted-projective space) and defines a Bochner-flat K\"ahler metric. However, the Bochner-flat K\"ahler metric on $\CP^2_{a_0,a_1, a_2}$ is unique,  up to isometries and homotheties: this follows for instance from the fact that any Bochner-flat  K\"ahler metric is extremal, by using the uniqueness  of the extremal K\"ahler metrics, established  in the toric case in \cite{guan}.

In the Calabi case ($a_1=a_2$),  the argument is essentially done in \cite{DL}: As $g_{\bf a}= {\tilde z}^{-2}\tilde g_{\bf a}$, the conformal factor is given by
$$\tilde z = \tilde \sigma_1 = \frac{1}{a}\Big(\tilde x_1 + \tilde x_2 \Big).$$
The latter is a smooth and positive function on $M=\mathbb{C}P^2_{a_0,a_1,a_1} \setminus [1,0,0]$, and therefore the metric $\frac{1}{a}\Big(\tilde x_1 + \tilde x_2 \Big)^{-2}\tilde g_{\bf a}$ is well-defined on that subset; it is complete because it is conformally compact, and by construction, when restricted to the pre-image $\mathring{M}$ of $\mathring{\rm S}$ it coincides with $g_{\bf a}$ restricted to the pre-image of $\mathring{\rm P}$. These two metrics are therefore isometric, being both complete and isometric over dense subsets.

Similarly, in the orthotoric case ($a_2>a_1$), $g_{\bf a} = {\tilde \sigma_1}^{-2} \tilde g_{\bf a}$, with
$$\tilde \sigma_1= \frac{1}{\alpha_1\alpha_2}\Big(\alpha_1 \tilde x_1 + \alpha_2 \tilde x_2\Big)= -\Big( \frac{\tilde x_1}{a_0a_1} + \frac{\tilde x_2}{a_0a_2}\Big)$$
is clearly strictly negative on $M=\mathbb{C}P^2_{a_0,a_1,a_2} \setminus [1,0,0]$. It follows again that $$\Big( \frac{\tilde x_1}{a_0a_1} + \frac{\tilde x_2}{a_0a_2}\Big)^{-2}\tilde g_{\bf a}$$ is well-defined and complete on $M$, and therefore isometric to the complete scalar-flat K\"ahler ALE  metric on $\mathbb{C}P^2_{-a_0, a_1, a_2}$.  \hfill $\Box$

\section{Resolution of singularities of type $\TI$}\label{s:resolution}
\subsection{Gluing weighted projective spaces and resolutions}\label{sec:consresol}
Let $\Gamma_{\bf b}$ be a finite cyclic subgroup of $\U(m)$ of type $\TI$.
By Definition~\ref{type-J} there exists a weight vector $\bf a$ congruent to $\bf b$
such that the weighted projective space $\CP^m_{-a_0,a_1,\ldots,a_m}$
is either smooth or has only singularities of type $\TI$. Such space
comes equipped with  a canonical projection
$$
q : \CP^m_{-a_0,a_1,\ldots,a_m}\to \CC^m/\Gamma_{\bf a}
$$
given in homogeneous coordinates by
$$
q([z_0:z_1:\ldots:z_m]) = (\zeta^{a_1}z_1,\ldots,\zeta^{a_m}z_m)
$$
where $\zeta$  is such that $\zeta^{a_0}=z_0$. Clearly, the above map is well-defined as
a map with values in $\CC^m/\Gamma_{\bf a}= \CC^m/\Gamma_{\bf b}$.

If $\CP_{-a_0,a_1,\ldots,a_m}$ is smooth, i.e. $a_1= \ldots = a_m=1$ we are done, as $q$ is the
resolution we were looking for. However, $\CP^m _{-a_0,a_1,\ldots, a_m}$ will have in general isolated
singularities of type $\TI$.  Specifically, consider the open set $V_1\subset
\CP^m_{-a_0,a_1,\ldots, a_m}$ given by the condition $\{z_1\neq
0\}$. As in the case of a smooth projective space, there are
corresponding affine coordinates obtained by fixing $z_1=1$. For this,
we choose  $\zeta$ such that $\zeta^{a_1}=1/z_1$. Then
$[z_0:z_1:\ldots:z_m]=[\zeta^{-a_0}z_0:\zeta^{a_1}z_1:\ldots:\zeta^{a_m}z_m]$. Since
$\zeta$ is only defined up to an $a_1$-th root of unity, 
we obtain affine coordinates chart $V_1\to \CC^m/\Gamma_{\bf b'}$, where
${\bf b}'=(a_1, -a_0,
  a_2,\ldots,a_m)$. If $V_1$ is not smooth, there is a
  singularity of type $\TI$ at  a point which, without loss of generality,  we can take to be $x_1 =
  [0:1:0:\ldots:0]\in V_1$ corresponding to $0\in \CC^m/\Gamma_{\bf
    b'}$. Since this is a singularity of type $\TI$, 
 there exists a weight vector ${\bf
  a}'$ congruent to ${\bf b}'$ such that
$\CP_{-a_0',a_1',\ldots,a_m'}$ is a weighted projective space of
non-compact type with singularities of type $\TI$ together with a
canonical projection $p:\CP_{-a_0',a_1',\ldots,a_m'}\to V_1$.

We construct a new complex manifold $\cY'$ out of
$\CP_{-a_0,a_1,\ldots,a_m}$ and $\CP_{-a_0',a_1',\ldots,a_m'}$ using
$p$ as a gluing map. More precisely
$$
\cY':=\left ( \CP_{-a_0,a_1,\ldots,a_m} \setminus \{x_1\} \sqcup
\CP_{-a_0',a_1',\ldots,a_m'} \right )/\sim
$$
where $\sim$ is the equivalence relation defined by $x\sim y$ (and $y\sim x$) if $x\in
V_1\setminus \{x_1\}$ and $y\in \CP_{-a_0',a_1',\ldots,a_m'}\setminus
p^{-1}(x_1)$ with the property that $p(y)=x$. By definition $\cY'$ is
also endowed with a canonical projection
$$
q':\cY'\to \CC^m/\Gamma_{\bf b}.
$$ 
There may be residual singularities of type $\TI$ in $\cY'$. By
the definition of such singularities,  the above construction can be
iterated a finite number of times, providing a finite sequence of complex manifold
\begin{equation}\label{eq:locresol-1}
\cY=\cY_n\to\cY_{n-1}\to \cdots\to \cY_0 = \CC^m/\Gamma_{\bf b},
\end{equation}
where $\cY$ is smooth and each $\cY_{i+1}$ is obtained by gluing a weighted projective
space of non-compact type onto $\cY_i$ as above.
The composition map
$$
\pi:\cY\to \CC^m/\Gamma_{\bf b}
$$
is called \emph{a resolution of type $\TI$}.

Let $\cX$ be a complex orbifold with singularities of type $\TI$ (i.e. which  has singularities modelled on neighbourhood of 
singularities of type $\TI$, see Sect.~\ref{sec:resolsub}). The resolutions of type $\TI$ are readily
extended to this setting. More precisely, we say that  $\cXhat$ is a resolution of
type $\TI$ of $\cX$ if there exists a finite
sequence of orbifolds with singularities of type $\TI$
\begin{equation}
\label{eq:resolcompact}
\cXhat=\cX_n\to\cX_{n-1}\to \cdots\to \cX_0 = \cX,  
\end{equation}
where $\cXhat$ is smooth, and each $\cX_{i+1}$  is obtained by gluing a weighted projective
space of non-compact type onto $\cX_i$.

\subsection{Resolution of extremal K\"ahler metrics}
Let $\cX$ be a complex orbifold with singularities of type
$\TI$. Assume that $\cX$ is endowed with an extremal K\"ahler metric
with K\"ahler form $\omega$, which is smooth in the orbifold sense. We consider a
resolution $\cXhat\to \cX$ of type $\TI$ obtained by gluing a finite
number of weighted projective spaces of non-compact type as in~\eqref{eq:resolcompact}.  We want to prove that $\cXhat$
carries smooth extremal K\"ahler metrics in suitable K\"ahler classes as well.

Using the  the result of Arezzo--Lena--Mazzieri~\cite{ALM}, which is a
refinement of the Arezzo--Pacard gluing theory \cite{AP} to the
extremal case, we obtain the following result:
\begin{thm}
  Let $\cX$ be a complex orbifold  with isolated singularities of type
  $\TI$ and $p_1:\cX_1\to \cX$ be the first step for constructing a
  resolution of type $\TI$. 

If $\cX$ carries an extremal K\"ahler metric with K\"ahler class
$\Omega\in H^2(\cX,\RR)$, 
then any neighbourhood of $p_1^*\Omega \in H^2(\cX_1,\RR)$ contains a
K\"ahler class $\Omega_1$ represented by an extremal K\"ahler metric on $\cX_1$.
\end{thm}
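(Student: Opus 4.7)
The plan is to realize the first resolution step $p_1 : \cX_1 \to \cX$ as a local replacement of each isolated singular point of $\cX$ by a small rescaled copy of a non-compact weighted projective space, and then to apply the extremal gluing theorem of Arezzo--Lena--Mazzieri~\cite{ALM} with the scalar-flat K\"ahler ALE metrics of Theorem~\ref{thm:main} as building blocks. Concretely, at each isolated singular point $p \in \cX$ of type $\TI$, a neighborhood is biholomorphic to a neighborhood of the origin in $\CC^m/\Gamma_{\bf b}$. By Definition~\ref{type-J}, there is a weight vector ${\bf a}$ congruent to ${\bf b}$ such that $\CP^m_{-a_0,a_1,\ldots,a_m}$ has at worst (milder) orbifold singularities of type $\TI$, together with a canonical projection $q : \CP^m_{-a_0,\ldots,a_m}\to \CC^m/\Gamma_{\bf a}= \CC^m/\Gamma_{\bf b}$, and $p_1$ is by construction obtained by gluing $\CP^m_{-a_0,\ldots,a_m}$ onto $\cX$ along $q$.

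Theorem~\ref{thm:main} endows each such non-compact weighted projective space with a scalar-flat K\"ahler ALE metric $g_{\bf a}$ whose asymptotic cone is precisely $\CC^m/\Gamma_{\bf a}$, and Proposition~\ref{p:asymptotics} gives, in a complex chart at infinity, a K\"ahler potential of the normal form $\tfrac{1}{4}\|z\|^2 + A\|z\|^{4-2m} + O(\|z\|^{3-2m})$ (with the appropriate logarithmic modification when $m=2$). This is exactly the asymptotic expansion required by the Arezzo--Pacard scheme, and it matches the Euclidean model in the sense that the ALE metric is, up to a lower-order correction, the standard flat metric on $\CC^m/\Gamma_{\bf a}$. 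Rescaling $g_{\bf a}$ by a small gluing parameter $\varepsilon >0$ and grafting the rescaled copy onto the punctured neighborhood of $p$ via $q$ produces a one-parameter family of approximate K\"ahler metrics on $\cX_1$, whose K\"ahler classes converge to $p_1^*\Omega$ as $\varepsilon \to 0$. Applying~\cite{ALM} (the extremal refinement of~\cite{AP}) then corrects this approximate metric into an honest extremal K\"ahler metric in a nearby K\"ahler class, yielding the desired $\Omega_1$ as close to $p_1^*\Omega$ as one likes by taking $\varepsilon$ small.

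The main obstacle is checking that the hypotheses of~\cite{ALM} genuinely apply in the semi-singular setting at hand: both $\cX$ and the pasted ALE pieces are orbifolds, and so is the output $\cX_1$. One must therefore work in weighted H\"older spaces adapted to the orbifold structure, establish the requisite Fredholm and surjectivity properties of the linearized extremal operator on $(\cX,\omega)$ with punctures and on each ALE piece, and verify that the finite-dimensional obstruction coming from the matching of the extremal vector field $\mathrm{grad}_\omega s_\omega$ at the singular points of $\cX$ with the residual lift on the ALE side can be killed by varying the gluing parameters (scale and position of each pasted piece). Since the singularities are \emph{isolated} and the ALE expansion matches the Euclidean one to the order prescribed by Proposition~\ref{p:asymptotics}, these conditions are precisely those for which the Arezzo--Lena--Mazzieri machinery is designed, and the theorem follows.
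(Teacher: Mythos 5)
Your proposal is correct and follows essentially the same route as the paper: both arguments graft a small rescaled copy of the scalar-flat K\"ahler ALE metric of Theorem~\ref{thm:main} onto the singularity, invoke the asymptotic expansion of Proposition~\ref{p:asymptotics} as the decay hypothesis needed for the gluing, and then cite the Arezzo--Lena--Mazzieri extremal refinement \cite{ALM} of \cite{AP} to produce an extremal metric in a K\"ahler class converging to $p_1^*\Omega$ as the gluing parameter shrinks. Your extra discussion of the weighted H\"older/Fredholm setup and the extremal vector field matching is a reasonable elaboration of what \cite{ALM} provides, but it does not change the argument.
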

\begin{proof}
The complex orbifold $\cX$ has only singularities of type $\TI$. In
particular they are isolated. The complex orbifold $\cX_1$ is 
obtained  by 
gluing a weighted projective spaces of non-compact type. This space
carries a scalar-flat K\"ahler  ALE metric by Theorem~\ref{thm:main}.
Furthermore, the ALE metric has a good decay at infinity, in the
sense that it satisfies Proposition~\ref{p:asymptotics}. This
technical fact is important as it is required for using the Arezzo--Pacard gluing theorem.

Relying on the Arezzo--Pacard and Arezzo--Lena--Mazzieri gluing theory \cite{AP, ALM}, a tiny copy of the
ALE metric on the weighted projective space can be glued onto the
extremal K\"ahler metric of $\cX$ in order to
obtain a nearby extremal K\"ahler metric on $\cX_1$. 
Notice that the gluing procedure provides a smooth extremal K\"ahler
metric on $\cX_1$ in a K\"ahler class very close to $p_1^*[\omega]\in H^2(\cX_1,\RR)$.
\end{proof}

Using the above theorem iteratively, we obtain an extremal K\"ahler
orbifold metric on each step $\cX_i$ of the resolution
\eqref{eq:resolcompact}. Thus we obtain the following corollary:
\begin{cor}
  Let $\cX$ be a complex orbifold  with isolated singularities of type
  $\TI$ and $p:\cXhat\to \cX$ be a 
  resolution of type $\TI$. 

If $\cX$ carries an extremal K\"ahler metric with K\"ahler class
$\Omega\in H^2(\cX,\RR)$, 
then any neighbourhood of $p^*\Omega \in H^2(\cX_1,\RR)$ contains a
K\"ahler class $\widehat \Omega$ represented by an extremal K\"ahler metric on $\cXhat$.
\end{cor}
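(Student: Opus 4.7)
The plan is to iterate the preceding theorem along the finite sequence of intermediate resolutions
$$\cXhat = \cX_n \to \cX_{n-1} \to \cdots \to \cX_0 = \cX$$
provided by Definition~\ref{type-J} and the construction recalled in \S\ref{sec:consresol}. I would set $\omega_0 := \omega$ and construct, by induction on $i$, an extremal K\"ahler metric $\omega_i$ on $\cX_i$ whose K\"ahler class $\Omega_i$ belongs to a prescribed (arbitrarily small) neighbourhood $\cU_i$ of $p_i^* \Omega_{i-1}$ in $H^2(\cX_i, \RR)$. At the final step this yields the sought extremal K\"ahler metric on $\cXhat = \cX_n$.

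The inductive step is made possible by the theorem itself, provided each $\cX_i$ satisfies its hypotheses, namely that $\cX_i$ is a complex orbifold with only isolated singularities of type $\TI$ and that $\cX_{i+1} \to \cX_i$ is the first step of a resolution of type $\TI$. Both conditions are built into the construction of \eqref{eq:resolcompact}: by Definition~\ref{type-J}, each gluing replaces a singularity of type $\TI$ by an open subset of a non-compact weighted projective space $\CP^m_{-a_0,a_1,\ldots,a_m}$ whose residual orbifold singularities are again of type $\TI$. Hence the theorem applies at every stage, starting from the extremal metric $\omega$ on $\cX_0 = \cX$.

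To obtain the final K\"ahler class in a prescribed neighbourhood $\cV$ of $p^*\Omega$ in $H^2(\cXhat, \RR)$, I would choose the neighbourhoods $\cU_i$ recursively, working backwards from $i=n$. Since $p = p_n \circ p_{n-1} \circ \cdots \circ p_1$ and each pullback $p_{i+1}^* : H^2(\cX_i, \RR) \to H^2(\cX_{i+1}, \RR)$ is continuous (in fact linear), one can pick $\cU_n$ contained in $\cV$ and then successively pick each $\cU_i$ small enough so that $p_{i+1}^*(\cU_i) \subset \cU_{i+1} - \varepsilon_{i+1}$ for a suitable margin $\varepsilon_{i+1}$. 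This guarantees that the cumulative deviation after $n$ applications of the theorem remains inside $\cV$.

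The principal obstacle, apart from this straightforward propagation argument, is verifying the hypothesis of the theorem at each stage $\cX_i$. This hinges entirely on the inductive nature of the definition of type $\TI$: it was designed precisely so that performing one gluing step preserves the property of having only singularities of type $\TI$, and in a finite number of steps exhausts them. Once this point is confirmed the corollary follows by a straightforward induction, with no further analytic input beyond the theorem already established.
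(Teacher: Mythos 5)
Your proposal is correct and follows essentially the same route as the paper, which proves the corollary simply by iterating the preceding theorem along the finite chain $\cXhat=\cX_n\to\cdots\to\cX_0=\cX$. Your backwards choice of neighbourhoods to control the cumulative drift of the K\"ahler class is in fact spelled out more carefully than in the paper's one-line argument, but it is the same idea.
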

Theorem~\ref{theo:extremal} is just a consequence of the above corollary.


\begin{thebibliography}{10}

\bibitem{abreu0} M. Abreu, {\it K\"ahler geometry of toric manifolds in symplectic coordinates}, in ``Symplectic and contact topology: Interactions and Perspectives'', Eliashberg, Khesin, Lalonde (eds.), Fields Institute Communications, {\bf 35} (2003), 1--24.

\bibitem{abreu} M. Abreu, {\it K\"ahler metrics on toric orbifolds}, J. Differential Geom.  {\bf 58}  (2001),  no. 1, 151--187.


\bibitem{AS}  M.~Abreu, and R.~Sena-Dias,  {\it Scalar-flat K\"ahler metrics on non-compact symplectic toric 4-manifolds}. Ann. Global Anal. Geom. {\bf 41} (2012) 209--239.

\bibitem{ACG0} V.~Apostolov,  D. M. J.~Calderbank, and P.~ Gauduchon, {\it The geometry of weakly self-dual K\"ahler surfaces}.  Compositio Math.  {\bf 135}  (2003), 279--322.

\bibitem{ACG} V. Apostolov, D.~M.~J. Calderbank and P. Gauduchon, {\it
Hamiltonian $2$-forms in K\"ahler Geometry} I {\it General Theory}, J. Differential Geom. {\bf 73} (2006), 359--412.

\bibitem{ACGT} V. Apostolov, D.~M.~J. Calderbank, P. Gauduchon and C. T{\o}nnesen-Friedman, {\it Hamiltonian $2$-forms in K\"ahler geometry} II
{\it Global classification}, J. Differential Geom. {\bf 68} (2004), 277--345.

\bibitem{ACGT1} V.~Apostolov, D.~M.~J. Calderbank, P.~Gauduchon and
C.~T{\o}nnesen-Friedman, {\it Extremal K\"ahler metrics on projective bundles over a curve}. Adv. Math. {\bf 227} (2011), 2385--2424.

\bibitem{ambitoric-1} V.~ Apostolov, D.~M.~J.~Calderbank, P.~Gauduchon, {\it Ambitoric geometry I: Einstein metrics and extremal ambikaehler structures}, J. reine angew. Math., DOI 10.1515.

\bibitem{ambitoric-2} V.~ Apostolov, D.~M.~J.~Calderbank, P.~Gauduchon, {\it  Ambitoric geometry II: Extremal toric surfaces and Einstein 4-orbifolds}, 	Ann. Scient. \'Ec. Norm. Sup.
(4) {\bf 48} (2015), 1075--112.

 \bibitem{AL} C.~Arezzo, and R.~Lenna, {\it Blowing up and
   desingularizing constant scalar-flat K\"ahler ALE space}. Private communication.

\bibitem{ALM} C.~Arezzo, R.~Lena, and L.~Mazzieri, {\it On the
  resolution of extremal and constant scalar curvature K\"ahler
  orbifolds}, arXiv:1507.04729

 \bibitem{AP} C.~Arezzo, and F.~Pacard, {\it Blowing up and desingularizing constant scalar curvature K\"ahler manifolds}. Acta Math. {\bf 196} (2006),179--228.
 
\bibitem{bryant} R.~L.~Bryant, {\it Bochner--K\"ahler metrics}.  J. Amer. Math. Soc.  {\bf 14}  (2001),  623--715. 

\bibitem{calderbank-singer} D.~Calderbank, and M.~Singer, {\it Einstein metrics and
 complex singularities}. Invent. Math. {\bf 156} (2004), no. 2, 405–443

\bibitem{DL} M. Dabkowski and M. Lock, {\it On K\"ahler conformal compactifications of $U(n)$-invariant ALE spaces}, Ann.  Global Anal. Geom. {\bf 49} (2016), 73--85.

\bibitem{Delzant} T. Delzant, {\it Hamiltoniens p\'eriodiques et image convexe de l'application moment}, Bull. Soc. Math. France {\bf 116} (1988), 315--339.

\bibitem{Do} S.~K. Donaldson, {\it Extremal metrics on toric surfaces: a continuity method}, J. Differential Geom. {\bf 79} (2008), 389--432.

\bibitem{Do1} S~K.~Donaldson, {\it  K\"ahler metrics with cone singularities along a divisor}, Essays in mathematics and its applications, 49--79, Springer, Heidelberg, 2012, available at arXiv:1102.1196.

\bibitem{guan} D.~Guan, {\it On modified Mabuchi functional and Mabuchi moduli space of K\"ahler metrics on toric bundles}, Math. Res. Let. {\bf 6} (1999), 547--555.


\bibitem{guillemin} V.~Guillemin, {\it K{\"a}hler structures on toric varieties}, J. Differential Geom. {\bf 40} (1994), 285--309.


\bibitem{EH} T.  Eguchi, and A.~J.~ Hanson, {\it Self-dual solutions to Euclidean gravity},  Annals of Physics {\bf 120} (1979), 82--105.
 
\bibitem{HS} A.~D. Hwang and M.~A. Singer, {\it A momentum construction for circle-invariant K\"ahler metrics}, Trans. Amer. Math. Soc. {\bf 354} (2002), 2285--2325.

\bibitem{LeBrun} C.~ LeBrun, {\it Counter-examples to the generalized positive action conjecture}, Comm. Math. Phys. {\bf 118} (1988), 591--596.

\bibitem{Eveline2} E.~Legendre, {\it Existence and non-uniqueness of constant
scalar curvature toric Sasaki metrics}, Composito Math. {\bf 147} (2011), 1613--1634.

\bibitem{LT} E. Lerman and S. Tolman, {\it Hamiltonian torus actions on
symplectic orbifolds and toric varieties}, Trans. Amer. Math. Soc. {\bf 349} (1997), 4201--4230.

\bibitem{Ma} R. Mazzeo, {\it K\"ahler--Einstein metrics singular along a smooth divisor}, Journ\'ees \'Equations aux D\'eriv\'ees Partielles, 
Saint-Jean-de-Monts, 1999, Exp. No VI, 1--10. 

\bibitem{pedersen-poon} H.~Pedersen and Y.~S.~Poon, {\it Hamiltonian Constructions of K\"ahler--Einstein Metrics
and K\"ahler Metrics of Constant Scalar Curvature}, Commun. Math. Phys. {\bf 136} (1991), 309--326.

\bibitem{Rub} Y.~Rubinstein, {\it Smooth and singular Kahler--Einstein metrics},  in: Geometric and Spectral Analysis (P. Albin et al., Eds.), Contemp. Math. {\bf 630} (2014), AMS and Centre Recherches Mathematiques, pp. 45--138. 

\bibitem{rollin-singer} Y.~Rollin, M.~Singer, {\it  Construction of K\"ahler surfaces with constant scalar curvature}, J. Eur. Math. Soc.  {\bf 11} (2009), 979--997.

\bibitem{jeff} J.~Viaclovsky, {\it Private communication}.

\bibitem{simanca} S.~R.~Simanca, {\it K\"ahler metrics of constant scalar curvature on bundles over $\mathbb{CP}^{n-1}$}, Math.
Ann. {\bf 291} (1991),  239--246.

\bibitem{tipler} C.~Tipler,{\it Extremal K\"ahler metrics on blow-ups of
  parabolic ruled surfaces.} Bull. Soc. Math. France {\bf 141} (2013), no. 3, 481–516


\bibitem{webster} S.~M.~ Webster, {\it On the pseudo-conformal geometry of K\"ahler manifolds}, Math. Z.  {\bf 157} (1977), 265--270.

\end{thebibliography}
\end{document}